\DeclareMathOperator*{\argmin}{arg\,min}
\DeclareMathOperator*{\tol}{tol}
\DeclareMathOperator*{\spn}{span}
\DeclareMathOperator*{\cl}{cl}
\DeclareMathOperator*{\sgn}{sgn}
\DeclareMathOperator*{\stp}{stop}
\DeclareMathOperator*{\href}{href}
\DeclareMathOperator*{\mg}{mg}
\newcommand*{\Bmin}{\texttt{Barrier.minimize}}
\newtheorem{theorem}{Theorem}[section]
\newtheorem{lemma}[theorem]{Lemma}
\newtheorem{corollary}[theorem]{Corollary}
\newtheorem{definition}[theorem]{Definition}
\newcommand\IG[2][]{\IfFileExists{#2}
  {\includegraphics[#1]{#2}}
  {\fbox{File #2 doesn't exist}%
   \message{Imagefile #2 doesn't exist^^J}}}
\newcommand{\cF}{\mathcal{F}}
\newcommand{\cf}{\mathcal{f}}
\newcommand{\cL}{\mathcal{L}}
\newcommand{\cQ}{\mathcal{Q}}
\newcommand{\cc}{\mathcal{c}}
\newcommand{\Cstop}{C_{\stp}}
\newcommand{\Chref}{C_{\href}}
\newcommand{\Cmg}{C_{\mg}}
\newcommand{\hH}{\hat{H}}
\title{Algorithm MGB to solve highly nonlinear elliptic PDEs in $\tilde{O}(n)$ FLOPS.}
\author{S{\'e}bastien Loisel}
\begin{document}

\maketitle

\begin{abstract}
We introduce Algorithm MGB (Multi Grid Barrier) for solving highly nonlinear convex Euler-Lagrange equations. This class of problems includes many highly nonlinear partial differential equations, such as $p$-Laplacians. We prove that, if certain regularity hypotheses are satisfied, then our algorithm converges in $\tilde{O}(1)$ damped Newton iterations, or $\tilde{O}(n)$ FLOPS, where the tilde indicates that we neglect some polylogarithmic terms. This the first algorithm whose running time is proven optimal in the big-$\tilde{O}$ sense. Previous algorithms for the $p$-Laplacian required $\tilde{O}(\sqrt{n})$ damped Newton iterations or more.
\end{abstract}

\section{Introduction}

Let $\Omega \subset \mathbb{R}^d$ be an open domain. Let $\Lambda : \mathbb{R}^d \to \mathbb{R}$ be continuous and convex.
Let $W^{k,p}(\Omega)$ and $W^{k,p}_0(\Omega)$ denote the usual Sobolev spaces. Let $f(x)$ be an integrable forcing. Consider the Euler-Lagrange problem
\begin{align}
\inf_{u \in W^{1,\infty}_0(\Omega)} J(u) \text{ where } J(u)=\int_{\Omega} f(x)u(x) + \Lambda(\nabla u(x)) \, dx. \label{e:convex optimization}
\end{align}
Problem \eqref{e:convex optimization} is our model convex optimization problem in function space. We are using homogeneous Dirichlet conditions to streamline our presentation.

In convex optimization, for a given convex function $\Lambda(q)$, one defines the {\bf epigraph}
\begin{align}
Q & = \{ (q,s) \in \mathbb{R}^d \times \mathbb{R} \; : \; s \geq \Lambda(q) \}.
\end{align}
To the convex $d+1$-dimensional set $Q$, one may associate the infinite dimensional convex set $\cQ$ defined by
\begin{align}
\cQ = \{ (q(x),s(x)) \in L^{\infty}(\Omega;\mathbb{R}^d) \times L^{\infty}(\Omega) \; : \; s(x) \geq \Lambda(q(x))  \text{ a.e. } x \in \Omega \}. \nonumber
\end{align}

Denote by $D$ the differential operator (and $D^*$ its adjoint)
\begin{align}
D & = \begin{bmatrix}
\nabla \\ & 1
\end{bmatrix}
\text{ and }
D^* = \begin{bmatrix}
-\nabla \cdot \\
& 1
\end{bmatrix}.
\end{align}
Here, $\nabla$ denotes the gradient with respect to $x \in \Omega$, and $\nabla \cdot$ denotes the divergence.
Note that if $u(x) \in W_0^{1,\infty}(\Omega)$, then $\nabla u(x)$ is uniformly bounded on $\Omega$, and hence $\Lambda(\nabla u(x))$ is also uniformly bounded on $\Omega$ since $\Lambda$ is continuous.
Thus, denoting $z(x) = [u^T(x),s(x)]^T$, problem \eqref{e:convex optimization} is equivalent to
\begin{align}
\inf_{\substack{z \in W^{1,\infty}_0(\Omega) \times L^\infty(\Omega) \\ Dz \in \cQ}}
\int_{\Omega} c(x)[z(x)] \, dx \text{ where } c(x) = \begin{bmatrix} f(x) \\ 1 \end{bmatrix}. \label{e:convexopt}
\end{align}
Here and elsewhere, we use the notation $f[u]$, $f[u,v]$, $f[u,v,w]$, etc... for the application of a $k$-form $f$ to arguments $u,v,\ldots$ and we identify a vector $v$ with the corresponding form, so that $v[x] = v^Tx$. In a similar fashion, if $M$ is a matrix, then $M[u,v]$ is identified with $u^TMv$.

A barrier for $Q$ is a convex function $F(w)=F(q,s)$ that is finite on the interior $Q^{\circ}$ of $Q$ and such that $F(w) = +\infty$ for any $w \in \partial Q$. If $F(w)$ is a barrier for $Q$, then
\begin{align}
\cF(w) = \int_{\Omega} F(w(x)) \, dx,
\end{align}
is a barrier for $\cQ$.\footnote{Because function spaces have multiple inequivalent topologies, there are many different possible definitions of $\partial \cQ$. One such definition is to say that $z \in \partial \cQ$ if $z \in \cQ$ and $\cF(z) = \infty$. This definition of $\partial\cQ$ clearly depends on the choice of barrier.} To the problem \eqref{e:convexopt}, one associates the {\bf central path} $z^*(t,x)$, defined by:
\begin{align}
z^*(t,x) & = \argmin_{w(x) \in W^{1,\infty}_0(\Omega) \times L^{\infty}(\Omega)} \cf(t,w) \text{ where } \label{e:energy central path}\\
\cf(w,t) & = \int_{\Omega} tc(x)[w(x)] + F(Dw(x)) \, dx. \nonumber
\end{align}
We shall denote $z^*(t,x) = [u^*(t,x),s^*(t,x)]^T$.
Equation \eqref{e:energy central path} can be regarded as an ``energy minimization formulation'' for the central path.
The corresponding {\bf weak formulation}\footnote{Because our equations are highly nonlinear, it is not immediately obvious what is the most appropriate set of test functions $\{w(z)\}$. For simplicity, we use $w \in W^{1,\infty}_0 \times L^{\infty}$.} is obtained by a formal computation of the first variation of \eqref{e:energy central path}:
\begin{align}
\int_{\Omega} tc[w] + F'(Dz^*)[Dw] & = 0 \text{ for all } w \in W^{1,\infty}_0(\Omega) \times L^{\infty}(\Omega). \label{e:weak central path}
\end{align}
The {\bf strong formulation} is obtained by formal integration by parts:
\begin{align}
\begin{cases}
tc(x) + D^*(F'(Dz^*(t,x))) = 0 \text{ for } x \in \Omega \text{ and } \\ 
u^*(t,x) = g(x) \text{ for } x \in \partial \Omega. \label{e:strong central path}
\end{cases}
\end{align}
The PDE portion of \eqref{e:strong central path} is a nonlinear algebraic-elliptic system, as is revealed by the componentwise equations:
\begin{align}
tf(x) - \nabla \cdot F_u(\nabla u(t,x),s(t,x)) & = 0, \label{e:central path 1}\\
t+F_s(\nabla u(t,x),s(t,x)) & = 0. \label{e:central path 2}
\end{align}
To obtain a concrete solver for the problem \eqref{e:convex optimization}, we introduce a quasi-uniform triangulation $T_h$ of $\Omega$. We shall denote by $\int_{\Omega}^{(h)}$ a quadrature rule that approximates $\int_{\Omega}$ on the triangulation $T_h$.
We further introduce a piecewise polynomial finite element space $V_h$ on $T_h$ such that $DV_h$ has degree $\alpha-1$. Define
\begin{align}
\cf_h(w,t) & = \int_\Omega^{(h)} tc[w] + F(Dw) \text{ and}\\
z_h^*(t,x) & = \argmin_{z \in V_{h}} \cf_h(z,t). \label{e:z_h}
\end{align}

A basic algorithm for solving \eqref{e:convex optimization} is to approximately follow the central path $z_h^*(t,x)$ at discrete values of $t = t_k = \rho^kt_0$, where $\rho>1$ is the ``step size''. Concretely, given the (approximate) value of $z_h^*(t_{k-1},x)$, one finds $z_h^*(t_{k},x)$ by damped Newton iterations on the optimization problem \eqref{e:z_h}. This is called the ``return to the central path'' or simply ``return to the center''.

This strategy was first analyzed in \cite{loisel2020efficient} for the $p$-Laplacian, where it was shown that it converges in $\tilde{O}(\sqrt{n})$ damped Newton iterations; the tilde indicates that polylogarithmic terms are neglected. Here, $n = O(h^{-d})$ is the number of grid points. In that paper, it was also mentioned that if one uses an $\tilde{O}(n)$ FLOPS linear solver for the damped Newton steps, then one obtains an $\tilde{O}(n^{1.5})$ FLOPS solver for the $p$-Laplacian. In the present paper, we introduce a multigrid algorithm for solving the nonlinear problem \eqref{e:convex optimization} in $\tilde{O}(n)$ FLOPS, which is optimal in the $\tilde{O}$-sense, since it takes at least $\tilde{O}(n)$ operations simply to write out a solution to main memory.

Note that each damped Newton iteration requires the solution of a linear system whose structure is that of a moderately heterogeneous elliptic problem. In dimension $d=1$, this system is tridiagonal, so we may in fact perform each damped Newton step in $O(n)$ FLOPS. In dimension $d=2$, on the one hand, it is known that direct solvers cannot run faster than $O(n^{1.5})$ FLOPS \citep{hoffman1973complexity}. Despite this ``negative'' result, it is folkloric that, for the finite values of $n$ encountered on laptop computers, direct solvers effectively scale like $O(n)$ FLOPS, see e.g. \citet{380702}. Although direct solvers do not appear to achieve $O(n)$ performance in dimension $d\geq 2$, $H$-matrix-based solvers can indeed be used in all regimes and run in $\tilde{O}(n)$ FLOPS, see \citet{bebendorf2016low} and references therein. We also mention domain decomposition preconditioners, e.g. \citet{loisel2015optimized}, \citet{loisel2013condition}, \citet{subber2014schwarz}, \citet{loisel2010optimized}, \citet{karangelis2015condition}, \citet{greer2015optimised}, \citet{loisel2017optimal}, \citet{loisel2008hybrid}. For nonlinear problems, see also \citet{berninger20142}. Such methods are also related to subspace correction methods \citep{tai2002global}.

In \cite{loisel2020efficient}, the theoretically optimal step size is found to be the well-known ``short step'' of convex optimization, which here is $\rho-1 \sim n^{-0.5} \sim h^{d/2}$. In convex optimization, one often prefers to use a step size $\rho$ that is independent of $n$, this is called a ``long step'', for example $\rho = 2$. If, by luck, each return to the center requires $O(1)$ damped Newton iterations, then the long step approach is a clear winner. Unfortunately, the theory of convex optimization predicts that long-stepping schemed may need as many as $O(n)$ damped Newton steps to return to the center. In \citet{loisel2020efficient}, it was revealed that the $p$-Laplacian triggers this worst-case behavior in long-stepping schemes. In order to obtain the ``best of both worlds'', an adaptive algorithm was devised in that paper, but the theoretical performance estimate is still governed by the short-step scheme.

In the present paper, we introduce Algorithm MGB, which converges to the solution in $\tilde{O}(1)$ Newton steps, and $\tilde{O}(n)$ FLOPS. This is clearly optimal in the big-$\tilde{O}$ sense since it takes time $\tilde{O}(n)$ to simply write out the solution to main memory. Algorithm MGB achieves this performance by returning to the center in $\tilde{O}(1)$ damped Newton steps, and the $t$ step size is long, in the sense that $(\rho-1)^{-1} = \tilde{O}(1)$. Although the theory says that the $t$ step size may depend polylogarithmically on $n$, we found in our numerical experiments that the $t$ step sizes were completely independent of $n$.

Iterative numerical algorithms are often parametrized in terms of the problem size $n$ and the tolerance $\tol>0$ used to stop the iteration. However, in our case, the number $\tol$ can be expressed in terms of $n$, as we now describe.

Although the function $J(u)$ in \eqref{e:convex optimization} is unlikely to be globally twice differentiable, it often happens that $J''(u^*)[v^2]$ is indeed well defined near a minimizer $u^*$ of \eqref{e:convex optimization}, provided the test function $v$ is sufficiently regular. When this happens, then $J(v_h)-J(u^*) \sim h^{2\alpha}$ if $v_h-u^* \sim h^\alpha$. If $v_h = u_h^*$ is obtained by a central path, i.e. $z_h^*(t,x) = [u_h^*(t,x),s_h^*(t,x)]$, then we have 
\begin{align}
J(u_h^*(t,x))-J(u^*(x))
& =
J(u_h^*(t,x)) - J(u_h^*(\infty,x)) + J(u_h^*(\infty,x))-J(u^*(x)) \nonumber \\
&  = O(t^{-1}+h^{2\alpha}), \label{e:t and h}
\end{align}
see Lemma \ref{l:filter}.
Equilibrating the error terms produces a termination criterion for the central path: $t^{-1} \sim h^{2\alpha}$. 

In the multigrid method, we have a sequence of grid parameters $1 \geq h^{(1)} > \ldots > h^{(L)} > 0$ and corresponding quasi-uniform triangulations $T_{h^{(\ell)}}$. To simplify our exposition, the grid $T_{h^{(\ell+1)}}$ is obtained by bisecting the edges of $T_{h^{(\ell)}}$ so that $h^{(\ell+1)} = 0.5 h^{(\ell)}$ and the grids are automatically quasi-uniform. To each grid $T_h$, we associate the finite element space $V_{h} \subset W^{1,\infty}(\Omega) \times L^{\infty}(\Omega)$, and we have that $V_{h^{(1)}} \subset V_{h^{(2)}} \subset \ldots \subset V_{h^{(L)}}$.

\begin{figure}
\includegraphics[width=0.5\textwidth]{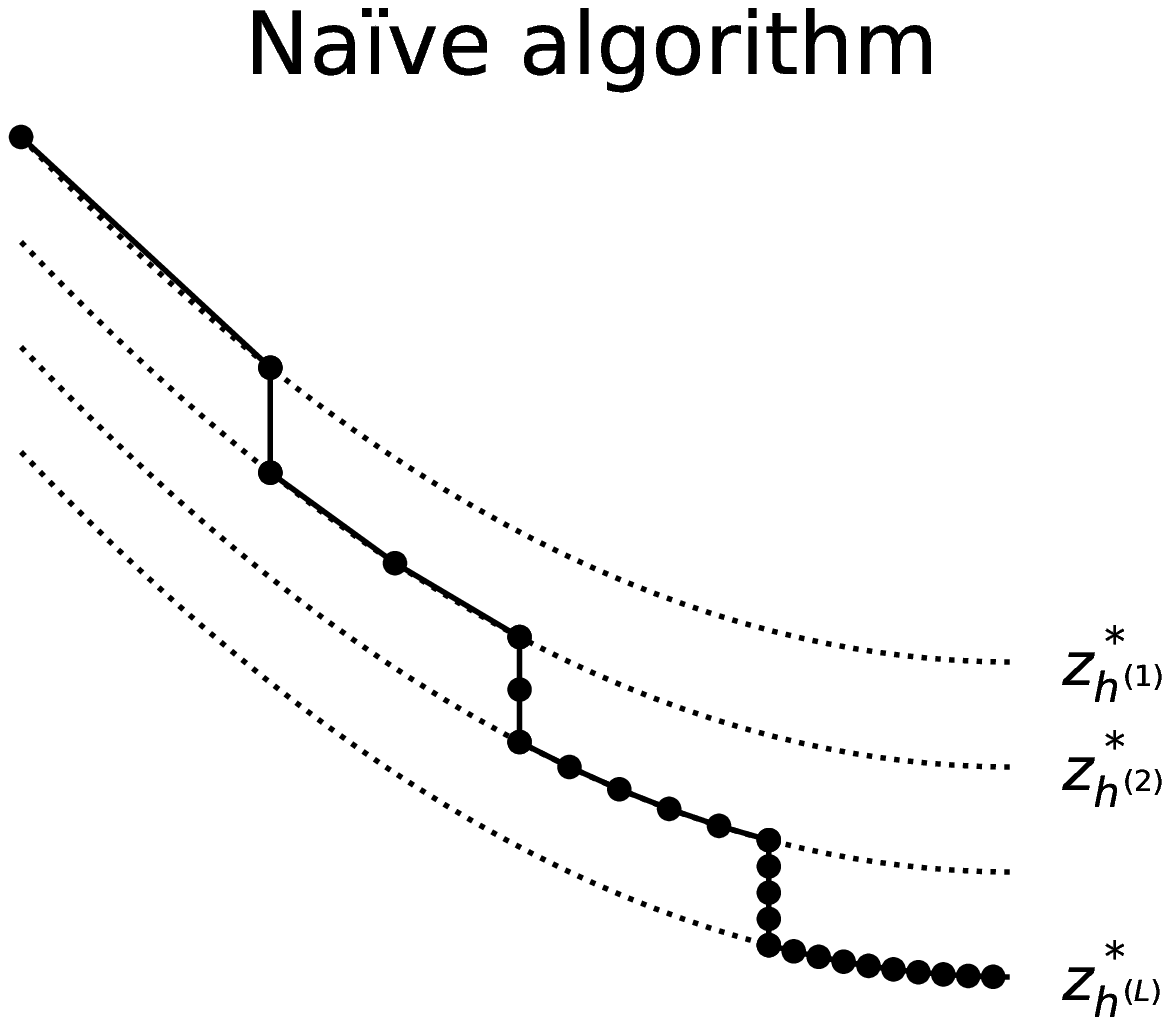}\includegraphics[width=0.5\textwidth]{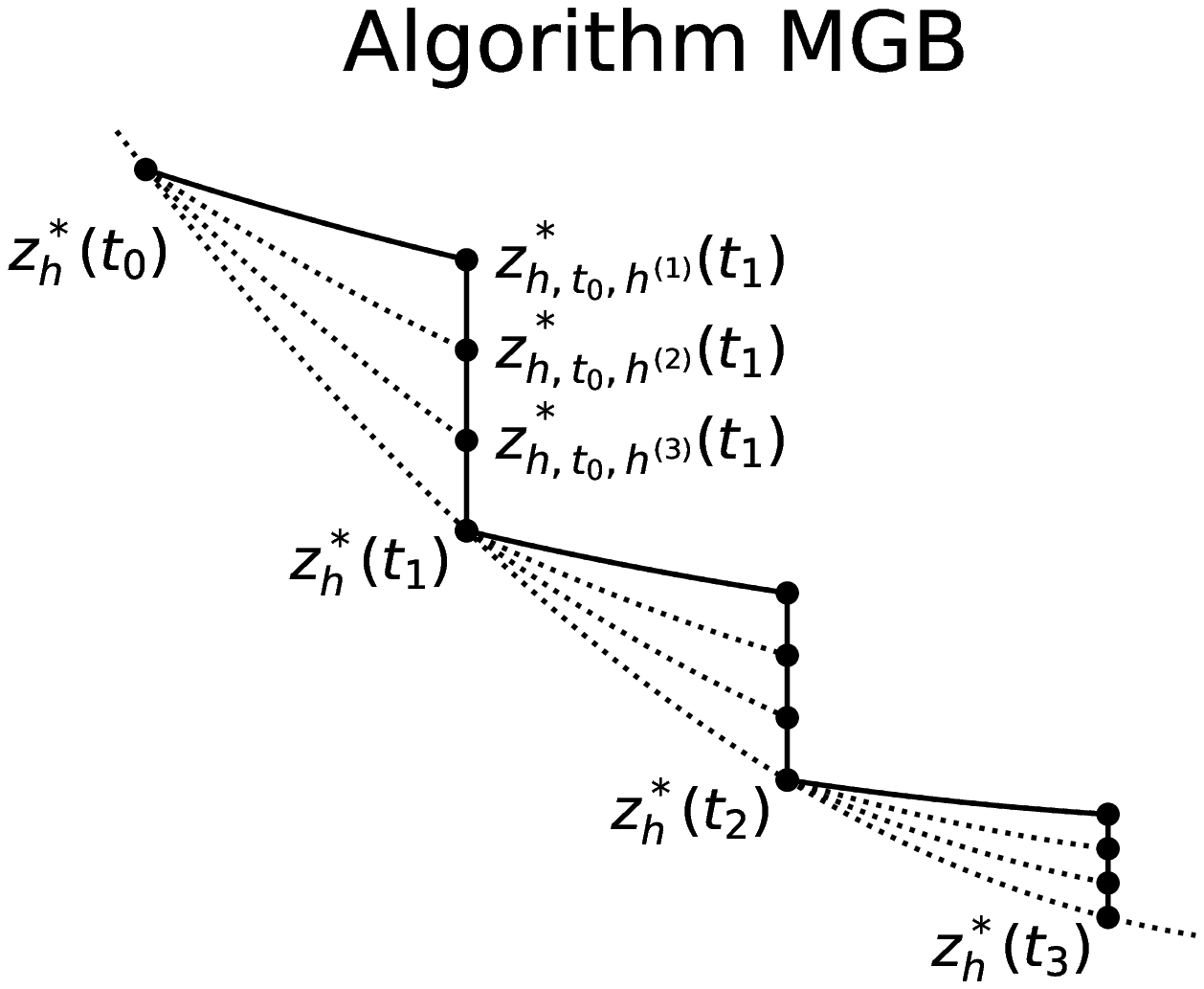}
\caption{
The naïve algorithm (left) shrinks the grid parameter $h$ ``as needed'' as $t$ increases, converging in $\tilde{O}(\sqrt{n})$ damped Newton steps (marked by circles), mostly on the fine grid. Our Algorithm MGB (right) requires $\tilde{O}(1)$ damped Newton steps. \label{f:algorithms}
}
\end{figure}

\begin{definition}[Naïve multigrid algorithm]
Let $t_0 = O(1)$ and $h_0=h^{(1)}$ be the coarsest grid parameter. Assume $V_h$ is a piecewise polynomial finite element space, and the degree of $DV_h$ is $\alpha-1$. Assume that $\alpha \geq d$. Assume $z^{(0)} \in g+V_{h_0}$. For $j=1,2,\ldots$, given $(t_{j-1},h_{j-1})$, define inductively $(t_k,h_k)$ using, at each step, either $t$-refinement or $h$-refinement, as follows.
\begin{itemize}
\item {\bf $t$ refinement:} set $(t_{j},h_{j}) = (\rho_j t_{j-1},h_{j-1})$, where $\rho_j >1$ is a $t$-step size.
\item {\bf $h$ refinement:} set $(t_j,h_j) = (t_{j-1},{1 \over 2}h_{j-1})$, where the factor $1/2$ is imposed by the grid subdivision scheme.
\end{itemize}
Then, $z^{(j)}$ is found by damped Newton iterations so as to minimize $\cf_h(z,t_j)$ over $z \in g+V_{h_j}$ with initial guess $z = z^{(j-1)}$. The stopping criterion is $t_j > \Cstop h^{-2k}$.

The {\bf schedule} is the choice of the order of $h$ and $t$ refinements, i.e. the sequence $(t_j,h_j)$. The ``$h$-then-$t$'' schedule is $t_0=\ldots=t_{L-1} = t_0$ and $\{h_j\}_{j=0}^{L-1} = \{h^{(1)},\ldots,h^{(L)}\}$ (i.e. $h$ refinements), followed by $h_L = h_{L+1} = \ldots = h^{(L)}$ and $t_j = \rho_j t_{j-1}$ for $j=L,L+1,\ldots$ (i.e. $t$ refinements). In other words, the $h$-then-$t$ schedule performs all $h$ refinements first, and then all $t$ refinements are done on the finest grid level $h = h^{(L)}$.
\end{definition}

The ``naïve multigrid algorithm'' is depicted in Figure \ref{f:algorithms}, left.
This method is similar to the method described by \citet{schiela2011interior}, where the authors write that ``This allows to perform most of the required Newton steps on coarse grids, such that the overall computational time is dominated by the last few steps.'' Unfortunately, we have found this not to be the case, as we now explain with our first main theorem.

\begin{theorem}[Newton iterations of naïve multigrid] \label{t:naive iteration count}
Assume $(T_h,c,F)$ is regular, as per Definition \ref{d:regular}. Let $t_0 = O(1)$. The ``$h$-then-$t$'' schedule converges in
\begin{align}
\tilde{O}(h^{-0.5d}) = \tilde{O}(n^{0.5}) \text{ damped Newton iterations,} \label{e:naive iteration count}
\end{align}
where $n$ is the number of grid points in $T_h$.

For an arbitrary schedule of $h$ and $t$ refinements, we estimate the number of damped Newton iterations on the finest grid. The best possible theoretical estimate is $\tilde{O}(n^{0.5})$ iterations. In other words, the $h$-then-$t$ schedule is optimal in the big-$\tilde{O}$ sense.
\end{theorem}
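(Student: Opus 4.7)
The plan is to split the argument into (i) an $\tilde O(\sqrt n)$ upper bound for the $h$-then-$t$ schedule, and (ii) a matching bound on the damped Newton iterations performed on the finest grid $V_{h^{(L)}}$ for any schedule, using only the standard self-concordance calculus.

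For (i), during the $h$-phase we keep $t = t_0 = O(1)$. Because $V_{h^{(\ell)}} \subset V_{h^{(\ell+1)}}$ and, under the regularity hypothesis of Definition \ref{d:regular}, the problem $\min_{V_{h^{(\ell)}}} \cf_{h^{(\ell)}}(\cdot,t_0)$ is well-conditioned with $t_0 = O(1)$, I expect the prolonged iterate $z^{(\ell-1)}$ to have Newton decrement bounded below one by a universal constant as a point of $V_{h^{(\ell+1)}}$. Hence each $h$-refinement costs $O(1)$ damped Newton steps, and the whole $h$-phase costs $O(L) = O(\log n)$ iterations. The $t$-phase then takes place entirely on $V_{h^{(L)}}$, where $\cF_{h^{(L)}}$ is a sum of $O(n)$ local barriers (one per quadrature point) and thus has self-concordance parameter $\nu_L = O(n)$. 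Standard short-step analysis gives $O(1)$ Newton iterations per outer step when $\rho_j = 1 + \Theta(1/\sqrt{\nu_L})$; the number of outer steps needed to reach $t_{\text{stop}} = \Cstop h^{-2\alpha}$ from $t_0$ is $\log(t_{\text{stop}}/t_0)/\log \rho = \tilde O(\sqrt{\nu_L}) = \tilde O(n^{1/2})$, yielding $\tilde O(n^{1/2})$ Newton iterations overall.

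For (ii), fix an arbitrary schedule and count only its iterations on $V_{h^{(L)}}$. These come in two flavours: $t$-refinements performed on the finest grid and the ``initial return to center'' after the last $h$-refinement. The self-concordance of $\cF_{h^{(L)}}$ caps each short-step $t$-refinement by $\rho_j - 1 = O(1/\sqrt{\nu_L})$ at a cost of $O(1)$ Newton iterations per step; long-stepping with $\rho_j = \Theta(1)$ trades fewer outer steps for up to $O(\nu_L) = O(n)$ Newton iterations per step in the worst-case theoretical estimate. Either way the finest-grid count is at least $\tilde O(\sqrt{\nu_L}\,\log R)$ to advance $t$ by a factor $R$. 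The initial return to center is equally costly: the iterate prolonged from a coarser grid $h_c$ approximates the fine central path only to $O(h_c^\alpha)$, which at $t \gg 1$ fails the small-Newton-decrement condition, so the standard estimate is $O(\nu_L) = O(n)$. To escape this, one must enter the finest grid at $t_{\text{enter}} = O(1)$ and then $t$-refine, exactly recovering the $h$-then-$t$ case with $\tilde O(\sqrt n)$ finest-grid iterations. Hence no schedule is provably better than $\tilde O(\sqrt n)$ in the big-$\tilde O$ sense.

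The main obstacle I anticipate is formalizing the grid transition in (i): one needs to control quantitatively the Newton decrement of the prolonged coarse iterate. I would exploit that at $t_0 = O(1)$ the energy $\cf_h(\cdot,t_0)$ is a bounded perturbation of the pure barrier $\cF_h$, apply self-concordance to bound the difference between $z^*_{h^{(\ell)}}(t_0,\cdot)$ and $z^*_{h^{(\ell+1)}}(t_0,\cdot)$, and use quasi-uniformity of $T_h$ together with the regularity of $(T_h,c,F)$ to show the decrement decays geometrically in $\ell$. The remaining ingredients are direct applications of the classical short-step interior-point analysis with parameter $\nu_L = O(n)$.
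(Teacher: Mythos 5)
Your decomposition matches the paper's: an upper bound for the $h$-then-$t$ schedule, plus an argument that no other schedule can beat it, both resting on $\cF_{h}$ having self-concordance parameter $\nu(h) = O(h^{-d}) = O(n)$ (the paper's Lemma \ref{l:numerical barrier}). The mechanism you use, however, differs in a way that matters for the bookkeeping. The paper does not track Newton decrements across grid transitions at all. Instead it controls the \emph{suboptimality gap} of the scaled functional $Ch^{-d}\cf_h$ and then invokes Lemma \ref{l:boyd}, which converts a suboptimality gap of size $O(1)$ directly into $O(1)+\log\log\epsilon^{-1}$ damped Newton steps. The gap itself is supplied by \eqref{e:naive gap 2}, which gives $\cf_h(z_h^*(t),t)-\cf_h(z_0^*(t),t) = O(\min\{th^\alpha,(th^\alpha)^2\})$, so at $t_0=O(1)$ the scaled gap is $O(h^{2\alpha-d}) = O(1)$ precisely because the regularity hypothesis demands $\alpha \geq d$. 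Your Newton-decrement route is not wrong, but be aware that the decrement of the scaled function $h^{-d}\cF$ is $h^{-d/2}$ times the decrement of $\cF$; the ``universal constant'' bound you posit therefore requires the unscaled decrement to decay like $h^{d/2}$, which you would in the end establish from the same $O(h^\alpha)$ gap estimate that the paper proves directly. The paper's gap route is thus strictly more economical, and it is where the hypothesis $\alpha\ge d$ actually enters.

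For the lower-bound half, your two ``flavours'' correspond to the paper's two cases, but your threshold ``$t\gg 1$'' for when the last $h$-refinement becomes expensive is too coarse. The paper's threshold is $t_j > h^{-\alpha}$: by \eqref{e:naive gap 2}, if the last $h$-refinement happens at $t_j>h^{-\alpha}$ the scaled gap is already $\Omega(h^{-d})=\Omega(n)$, so the centering after the final $h$-refinement dominates; if $t_j\le h^{-\alpha}$, there remain $\Theta(\log h^{-1})$ orders of magnitude of $t$-increase to perform on the fine grid, which at the short-step rate \eqref{e:naive gap 1} costs $\tilde O(h^{-d/2})$. Your vaguer ``fails the small-Newton-decrement condition at $t\gg1$'' does not distinguish between a moderately large $t_j$ (where the prolonged iterate is still within the $O(1)$-gap basin after scaling) and the genuinely bad regime $t_j\gtrsim h^{-\alpha}$, so as stated it would not establish the dichotomy. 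Tightening the threshold to $t_j\sim h^{-\alpha}$ and arguing via the gap estimate \eqref{e:naive gap 2} and Lemma \ref{l:boyd}, rather than decrements, brings your proposal into full alignment with the paper.
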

We delay the proof of our main theorems to later sections.

We also mention that there is seemingly no satisfactory analysis of multigrid algorithms for nonlinear problems in the literature. Indeed, \citet{brabazon2014nonlinear} write that ``To the best of our knowledge there exists no valid theory for the convergence of FAS for the case where the nonlinearity is in the highest order term'' (see also references therein). This is despite the methods in \cite[Chapter 13]{hackbusch2013multi}.

In order to describe our algorithm, we introduce the notion of {\bf shifted central paths}. For given $H \geq h$ and $t_0>0$, define
\begin{align}
z^*_{h,t_0,H}(t) & = \argmin_{z \in z^*_{h}(t_0) + V_H} \cf_h(z,t). \label{e:shifted central path}
\end{align}
As before, the parameter $x \in \Omega$ omitted in the notation is implied.

\begin{definition}[Algorithm MGB] \label{d:Algorithm MGB}
Let $t_0 = O(1)$, $z^{(0)} \in V_{h^{(1)}} \cap \cQ$ be given.
Let $h = h^{(L)}$ be the fine grid. We define $z^{(k)}$ inductively as follows.

Given an approximation $z^{(k)} \approx z^*_h(t_k)$, compute $z^{(k+1)} \approx z^*_h(t_{k+1})$ as follows, where $t_{k+1} = \rho_k t_k$ and $\rho_k > 1$ is some step size.

\begin{itemize}
\item For $\ell = 1,\ldots,L$, find an approximation $z^{(k+{\ell \over L})}$ of  $z^*_{h,t_k,h^{(\ell)}}(t_{k+1})$ by damped Newton iterations on \eqref{e:shifted central path}, starting with the initial guess $z = z^{(k+{\ell-1 \over L})}$.
\end{itemize}

Stop when $t_j > \Cstop h^{-2\alpha}$.
\end{definition}

Our second main result shows that Algorithm MGB converges in $\tilde{O}(1)$ Newton iterations.

\begin{theorem}[Newton iterations of Algorithm MGB] \label{t:Algorithm MGB}
Assume $(T_h,c,F)$ is regular, as per Definition \ref{d:regular}. There is a long step size $\rho_k = t_{k+1}/t_k > 1$ (i.e. $(\rho-1)^{-1} = \tilde{O}(1)$) such that Algorithm MGB converges in
\begin{align}
\tilde{O}(1) \text{ damped Newton iterations.}
\end{align}
\end{theorem}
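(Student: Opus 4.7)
The plan is to factor the total damped Newton count as (outer $t$-iterations) $\times$ ($L$ grid-level subproblems per outer iteration) $\times$ (Newton iterations per shifted subproblem), and bound each factor by $\tilde{O}(1)$. The first two factors are easy counting arguments; the substance lies in showing that each subproblem \eqref{e:shifted central path} admits an $O(1)$-iteration damped Newton solve, and it is there that the long-step hypothesis $(\rho-1)^{-1} = \tilde{O}(1)$ is consumed.

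For the outer count, since $\log \rho_k$ is at least an inverse polylog in $n$ and the stopping criterion is $t_j > \Cstop h^{-2\alpha}$, the number of outer iterations is $O(\log(h^{-2\alpha})/\log\rho) = \tilde{O}(1)$. The number of inner levels is $L = \log_2(h^{(1)}/h^{(L)}) = O(\log n) = \tilde{O}(1)$. So the theorem reduces to bounding by $O(1)$ the number of damped Newton steps needed to minimize $\cf_h(\cdot,t_{k+1})$ over $z^*_h(t_k) + V_{h^{(\ell)}}$ starting from $z^{(k+(\ell-1)/L)}$, for every $k$ and every $\ell \in \{1,\ldots,L\}$.

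For the per-subproblem count I would invoke the Nesterov--Nemirovski theory: damped Newton on a self-concordant objective converges in $O(\lambda_0 + \log\log \epsilon^{-1})$ iterations, where $\lambda_0$ is the initial Newton decrement. Writing $\tz^*_\ell := z^*_{h,t_k,h^{(\ell)}}(t_{k+1})$ for the target at level $\ell$ and noting that $z^{(k+(\ell-1)/L)}$ approximates $\tz^*_{\ell-1}$ to within the tolerance produced by the previous subproblem, I need a uniform bound $\|\tz^*_{\ell-1}-\tz^*_\ell\|_{\mathrm{N}} = O(1)$ in the Newton norm induced by the Hessian at $\tz^*_\ell$. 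Since $V_{h^{(\ell-1)}}\subset V_{h^{(\ell)}}$, $\tz^*_\ell$ is the Hessian-orthogonal projection of $z^*_h(t_{k+1})$ onto the larger affine subspace, so $\tz^*_{\ell-1}-\tz^*_\ell$ lies in $V_{h^{(\ell)}}\setminus V_{h^{(\ell-1)}}$ and is controlled by Jackson-type estimates of order $(h^{(\ell-1)})^\alpha$, using the regularity $\alpha$ of the central path guaranteed by Definition \ref{d:regular}. Combined with the uniform bounds on $F''$ along the central path built into the regularity hypothesis, this yields a constant Newton-metric gap at every level. The base case $\ell=1$ combines this argument with the long-step perturbation $z^*_h(t_k)\to z^*_h(t_{k+1})$, but now projected onto the coarsest subspace $V_{h^{(1)}}$, whose dimension is independent of $n$.

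The principal obstacle is this base-level long-step bound: a naive self-concordance estimate gives a displacement $O((\rho-1)\sqrt\nu)$ with $\nu = O(n)$, which would force a short step and reproduce the $\tilde{O}(\sqrt n)$ count of Theorem \ref{t:naive iteration count}. The resolution must exploit that at level $\ell=1$ we optimize only over $V_{h^{(1)}}$, whose dimension is $O(1)$, so the effective self-concordance parameter of the subproblem is $O(1)$ rather than $O(n)$; more generally, a telescoping argument across $\ell=1,\ldots,L$ distributes the full central-path update $z^*_h(t_k)\to z^*_h(t_{k+1})$ into $L$ coarse-to-fine Hessian-projected increments, each of Newton size $O(1)$. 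Making this decomposition rigorous under the regularity hypotheses of Definition \ref{d:regular}, and propagating the damped-Newton residuals across levels so that the proximity condition survives at level $\ell+1$, is the crux of the proof. Once done, multiplying the three $\tilde{O}(1)$ factors gives the claimed total damped Newton count.
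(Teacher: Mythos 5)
Your factorization of the total count into (outer $t$-steps) $\times$ ($L$ levels) $\times$ (Newton iterations per level) is exactly right, and the bounds on the first two factors are correct. The gap is in the third factor, where your proposal misses the mechanism that makes each level-$\ell$ subproblem an $O(1)$-iteration Newton solve, and instead relies on a claim that does not hold.

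You argue that at level $\ell=1$ ``the effective self-concordance parameter of the subproblem is $O(1)$ rather than $O(n)$'' because $\dim V_{h^{(1)}} = O(1)$. This is false in general: $\cF_h$ is a sum of $O(n)$ copies of $F$ over fine-grid quadrature points regardless of which affine subspace $A_\ell = z_h^*(t_1)+V_{h^{(\ell)}}$ one optimizes over, so the crude estimate $\nu(h) = O(h^{-d})$ of Lemma \ref{l:numerical barrier} is unchanged by restriction to a low-dimensional subspace. What \emph{does} allow improvement is not dimension but a uniformity property of $F''(Dz)$ across the quadrature points of each coarse cell $K\in T_{h^{(\ell)}}$; that uniformity is precisely the discrete reverse H\"older inequality of Definition \ref{d:regular}, and the paper's proof hinges on it. Concretely, Theorem \ref{t:RH continuation} propagates the reverse H\"older bound from $z_h^*(t)$ to the Lebesgue level set $\cL_{A_\ell,t}(\beta)$, and the lemma preceding Lemma \ref{l:mg t refinement} converts this into the third-derivative bound $|\cF'''_h(Dw)[(D\phi)^3]|\leq C\, C_{RH}\, H^{-0.5d}\,(\cF''_h(Dw)[(D\phi)^2])^{1.5}$, so that $(h^{(\ell)})^{-d}\cf_h(\cdot,t)$ (times an $O(1)$ constant involving $C_{RH}^2$) is standard self-concordant on the level set. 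The suboptimality gap of this \emph{rescaled} objective is then $(h^{(\ell)})^{-d}\cdot O((h^{(\ell)})^{2d}(L+1)^{-2}C_{RH}^{-2}) = O((h^{(\ell)})^d)= O(1)$, and Lemma \ref{l:boyd} gives $O(1)+\log\log\epsilon^{-1}$ Newton steps. Your Jackson-estimate argument bounds the Euclidean displacement $\tz^*_{\ell-1}-\tz^*_\ell$ but does not, by itself, produce the $H^{-0.5d}$ scaling of the self-concordance constant needed to cash that displacement in as an $O(1)$ gap for the \emph{rescaled} objective.

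There is a second, related omission. The admissible long step size in \eqref{e:rho mg} is $\rho - 1 \lesssim \bigl((L+1)C_{RH}(z_h^*(t))\bigr)^{-1}$, so both the outer $t$-count and the per-level iteration bound depend on $C_{RH}$ staying polylogarithmic — this is a substantive hypothesis (item 2 of Definition \ref{d:regular}, justified a posteriori in Section 7) and the proof cannot proceed without it. Your proposal treats ``regularity'' generically and never invokes the reverse H\"older structure, which is the only non-standard ingredient and the crux of why Algorithm MGB improves on the na\"ive $\tilde{O}(\sqrt n)$ count.
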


Using an $\tilde{O}(n)$ FLOPS linear solver for the damped Newton steps, our algorithm thus requires $\tilde{O}(n)$ FLOPS to solve \eqref{e:convex optimization}, which is optimal in the big-$\tilde{O}$ sense.

We also mention that in the convex optimization literature, long-stepping schemes generally require $O(n)$ iterations per centering step, which is often thought to be optimal for long-stepping schemes for general convex optimization problems. Within that framework, our result can be interpreted as a description of a wide class of convex optimization problems, for which return to the center in a long-stepping scheme can be achieved in $\tilde{O}(1)$ damped Newton steps. We are not aware of any other long-stepping scheme that has this extremely fast convergence property.

Our regularity hypotheses are described in Definition \ref{d:regular}. It says in part that $Dz_0^*(\tau^{-1},x)$ should be sufficiently smooth as a function of $\tau = t^{-1}$ and $x \in \Omega$. Such smoothness hypotheses are commonplace in the analysis of PDEs. Mathematicians are usually comfortable with regularity hypotheses of a well-known type; solutions of PDEs are expected to be smooth. However, one of the requirements of Definifion \ref{d:regular} is that $\sqrt{F''(Dz_{h,t_0,H}^*(t))}$ should be in some uniform reverse Hölder class. 

We are not aware of any other algorithm that uses reverse Hölder classes as a regularity hypothesis. In order for such a class to be suitable as a regularity hypothesis, these functions should somehow be plentiful, especially as solutions of PDEs.  The reverse Hölder classes are studied in the context of classical Fourier analysis \citep{grafakos2008classical}, \citep{cruz1995structure} and have applications to the analysis of nonlinear PDEs \citep{kinnunen2000higher}.

In this paper, the symbol $C$ shall be used to denote a generic constant that may depend on $\Omega$, $c$, $F$ or the regularity parameter of $T_h$ or the degree $\alpha-1$ of the piecewise polynomial functions $DV_h$, but is otherwise independent of $h$ and $t$. The constant $C$ may not represent the same number from one equation to the next. If multiple constants are involved, we may use the notation $C_1,C_2,\ldots$ to distinguish them.

Our paper is organized as follows. In Section 2, we briefly review the theory of self-concordant calculus in finite dimensions, and collect some essential results for the special case of the epigraph $Q$. In Section 3, we introduce the quadrature rules that we will be using. In Section 4, we give a abstract theory of self-concordance in function spaces, culminating with the notion of ``reverse Hölder continuation''. In Section 5, we use this theory to analyze the behavior of the naïve algorithm, while in Section 6, we analyze Algorithm MGB. In section 7, we give an ``a priori estimate'' for the reverse Hölder inequality that is part of our regularity hypotheses. This shows that the reverse Hölder inequality can be obtained from some smoothness conditions on the solution and on the barrier. In Section 8, we discuss the practical MGB algorithm, which introduces some optimizations and methods for handling floating point inaccuracies. We have some numerical experiments in Section 9, and we end with some conclusions in Section 10.

\section{Self-concordance in finite dimensions}

If $\|\cdot\|_+$ is any norm on $\mathbb{R}^n$, then for each $k = 1,2,3,\ldots$, there is an induced norm on homogeneous polynomials of degree $k$, defined by $\vvvert P \vvvert_+ = \sup |P[u^k]|/\|u\|_+^k$, where the supremum is taken over $u \neq 0$. If $\|\cdot\|_+$ is a Hilbertian norm, then \cite{banach1938homogene} showed that $\vvvert \cdot \vvvert_+$ is also given by $\vvvert P \vvvert_+ = \sup |P[u^{(1)},\ldots,u^{(k)}]|/\prod_j \|u^{(j)}\|_+$, with the supremum taken over nonzero vectors,  see also \cite{loisel2001polarization}.

Let $F(z)$ be a strictly convex thrice-differentiable barrier function on $Q$. In particular, $H(z) = F''(z)$ is symmetric positive definite (SPD) for every $z \in Q^{\circ}$, and thus $(Q^{\circ},H)$ is a Riemannian manifold. The norm at $z \in Q^{\circ}$ is denoted $\|u\|_z = \|u\|_{F''(z)} = \sqrt{u^TF''(z)u}$. Recall that $F$ is said to be {\bf standard self-concordant} if
\begin{align}
|F'''(z)[u^3]| \leq 2\|u\|_z^3. \label{e:standard self-concordant}
\end{align}
Equation \eqref{e:standard self-concordant} says that the induced norm $\vvvert F'''(z)\vvvert_{z}$  is at most $2$.

A standard self-concordant function is further said to be a {\bf self-concordant barrier} with parameter $1 \leq \nu < \infty$ if
\begin{align}
|F'(z)[u]| \leq \sqrt{\nu} \|u\|_z. \label{e:self-concordant barrier}
\end{align}
Equation \eqref{e:self-concordant barrier} says that the induced norm $\vvvert F'(z) \vvvert_z$ is at most $\sqrt{\nu}$.
The dual norm is $\|u\|_z^* = \sqrt{u^T[F''(z)]^{-1}u}$ and satisfies the ``Cauchy-Schwarz'' type inequality $|u^Tv| \leq \|u\|_x^* \|v\|_x$.
The {\bf Newton decrement} is $\lambda_F(z) = \|F'(z)\|_x^* = \sqrt{F'(z)^T[F''(z)]^{-1}F'(z)}$ and one checks that \eqref{e:self-concordant barrier} is equivalent to $\lambda_F^2(z) \leq \nu$.

In order to avoid repeating the well-known theory, we shall frequently quote results from the book of \cite{nesterov1994interior}, e.g. [NN, Theorem 2.5.1] states that there exists a $\nu$-self-concordant barrier for the convex set $Q$, with $\nu = O(d)$, and the equation (NN2.5.1) is a formula for the ``universal barrier''.

On the value of $\nu$ in dimension $n$, [NN, Section 2.3.4] states that ``generally speaking, the parameter cannot be less than $n$''. As a result, even if $F(z)$ is a self-concordant barrier, still $\cF(w)$ will not be a self-concordant barrier, since its domain $\cQ$ is infinite dimensional, and hence the parameter of $\cF$ would have to be $\infty$.

Even though $\cF$ is not a self-concordant barrier for $\cQ$, still a variant of \cite[Theorem 4.2.7]{nesterov2013introductory} holds for our central path $z^*(t,x)$, provided $F(z)$ is a $\nu$-self-concordant barrier on $Q$. As we shall see in Lemma \ref{l:filter}, our central path $z^*(t,x)$ satisfies
\begin{align}
\cc[z^*] - \cc^* \leq {|\Omega| \nu \over t} \text{ where } \cc[w] = \int_{\Omega} c(x)[w(x)] \, dx. \label{e:filter}
\end{align}
As usual, $|\Omega|$ is the Lebesgue measure of $\Omega$, and we have denoted by $\cc^*$ the infimum in \eqref{e:convexopt}. The bound \eqref{e:filter} shows that $z^*(t,\cdot)$ is a minimizing filter in $\cQ$ for the functional $\cc$. In other words, the central path can also be used in infinite dimensions to solve convex optimization problems.

Self-concordant functions satisfy the following explicit bounds.
\begin{lemma} \label{l:basic self concordant estimates}
Let $G(z)$ be a standard self-concordant function on $Q$.
Define
\begin{align}
\psi(\alpha) & = \alpha-\log(1+\alpha).
\end{align}
Denote $\|u\|_v^2 = G''(v)[u^2]$.
For $y,z \in Q$,
\begin{align}
{
\|y-z\|_z^2
\over
(1+\|y-z\|_z)^2
} &
\leq \|y-z\|_y^2 
\text{ and } \\
G(z) + G'(z)[y-z] + \psi(\|y-z\|_z) & \leq G(y). \label{e:sc norm ineq 1}
\end{align}
{Furthermore, if $\|y-z\|_z < 1$, then}
\begin{align}
\|y-z\|_y^2 
& \leq {
\|y-z\|_z^2
\over
(1-\|y-z\|_z)^2
}
\text{ and } \label{e:sc norm ineq 2} \\
G(y) & \leq G(z) + G'(z)[y-z] + \psi(-\|y-z\|_z) \label{e:sc norm ineq 3}
\end{align}
\end{lemma}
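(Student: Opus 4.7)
The strategy is the standard Nesterov--Nemirovski reduction to a one-dimensional problem along the line segment joining $z$ and $y$. Let $u=y-z$ and, for $t$ in a neighborhood of $[0,1]$, set
\begin{align}
\phi(t) = G''(z+tu)[u,u], \qquad g(t) = G(z+tu).
\end{align}
Then $g'(t)=G'(z+tu)[u]$, $g''(t)=\phi(t)$, and $\phi'(t)=G'''(z+tu)[u,u,u]$. The self-concordance hypothesis \eqref{e:standard self-concordant}, applied to the vector $u$ with respect to the local norm $\|\cdot\|_{z+tu}$, gives
\begin{align}
|\phi'(t)| \;\leq\; 2\,\phi(t)^{3/2}.
\end{align}
Since $(\phi^{-1/2})' = -\tfrac12 \phi^{-3/2}\phi'$, this is equivalent to saying that $\phi(t)^{-1/2}$ is $1$-Lipschitz in $t$ on the domain where $\phi>0$.

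Setting $r=\|u\|_z=\phi(0)^{1/2}$, integrating the Lipschitz bound on $[0,1]$ yields
\begin{align}
\tfrac{1}{r} - t \;\leq\; \phi(t)^{-1/2} \;\leq\; \tfrac{1}{r} + t
\end{align}
whenever the left side is positive. Inverting at $t=1$ gives
\begin{align}
\frac{r}{1+r} \;\leq\; \|u\|_y \;\leq\; \frac{r}{1-r} \quad\text{(the upper bound requires $r<1$),}
\end{align}
which is precisely the pair of Hessian-ratio estimates. Applied at a general $t$ (with $rt<1$ for the upper bound), the same argument produces
\begin{align}
\frac{r^2}{(1+rt)^2} \;\leq\; \phi(t) \;\leq\; \frac{r^2}{(1-rt)^2}.
\end{align}

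For the function bounds, Taylor's formula with integral remainder gives
\begin{align}
G(y)-G(z)-G'(z)[u] \;=\; \int_0^1 (1-t)\,\phi(t)\, dt.
\end{align}
Substituting the one-sided bounds on $\phi(t)$ and performing the elementary integrals (change of variable $s=rt$, then integrate $\int_0^r (1-s/r)\,ds/(1\pm s)^2$) yields $r-\log(1+r)=\psi(r)$ as the lower bound and $-r-\log(1-r)=\psi(-r)$ as the upper bound (when $r<1$). This produces \eqref{e:sc norm ineq 1} and \eqref{e:sc norm ineq 3}.

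No step is a serious obstacle: once the Lipschitz property of $\phi^{-1/2}$ is in hand the rest is bookkeeping. The only mild care needed is in the upper estimates, where one must verify that the trajectory $z+tu$ stays in $Q^\circ$ for $t\in[0,1]$ so that $\phi(t)$ is defined and positive throughout; this follows because the lower bound $\phi(t)^{-1/2}\geq 1/r - t$ keeps $\phi(t)$ finite precisely when $rt<1$, and under the hypothesis $r<1$ this holds for all $t\in[0,1]$. The unconditional lower bounds \eqref{e:sc norm ineq 1} similarly follow without any size restriction on $r$, since $\phi(t)^{-1/2}\leq 1/r+t$ holds throughout the open segment and extends by convexity/continuity to the endpoint.
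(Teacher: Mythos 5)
Your proposal is correct and takes essentially the same route as the paper: restrict $G$ to the segment $t\mapsto z+tu$, derive the $1$-Lipschitz bound on $t\mapsto G''(z+tu)[u^2]^{-1/2}$ from standard self-concordance (equivalent to the paper's extremal ODE $\phi'''=\pm 2(\phi'')^{3/2}$), invert to obtain the two-sided Hessian bounds, and integrate twice (your Taylor-with-remainder form is the same computation) to get $\psi(\pm r)$. The only difference is notational and that you are slightly more careful about the domain of validity, which the paper glosses over.
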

\begin{proof}
The following argument is standard.
Let $\phi(t) = G(z+th)$ with $h = y-z$. Thus, $\phi''(t) = G''(z+th)[h^2]$ and
\begin{align}
|\phi'''(t)| & = |G'''(z+th)[h^3]| \\
& \leq 2(G''(z+th)[h^2])^{1.5} = 2 \phi''(t).
\end{align}
Solving the extremal differential equations $\phi'''(t) = \pm 2\phi''(t)^{1.5}$ for the unknown $\phi''(t)$ yields
$
-t  \leq \phi''(t)^{-0.5} - \phi''(0)^{-0.5} \leq t,
$ and hence
\begin{align}
{\phi''(0) \over (1+t\phi''(0)^{0.5})^2}
\leq
\phi''(t)
\leq
{\phi''(0) \over (1-t\phi''(0)^{0.5})^2}.
\end{align}
The lower bound is valid for $t \geq 0$ and the upper bound is valid for all $0 \leq t < \phi''(0)^{-0.5}$. Integrating twice, we find that
\begin{align}
\psi(t\phi''(0)) & \leq \phi(t)-\phi(0)-\phi'(0)t \leq \psi(-t\phi''(0)).
\end{align}
The results follow by substituting $t=1$.
\end{proof}

We now collect several basic properties from the literature.

\begin{lemma} \label{l:properties}
Assume that $\Lambda(q)$ is convex and satisfies $\Lambda(q) \geq \alpha \|q\|_2+\beta$ for some $\alpha>0$, $\beta \in \mathbb{R}$ and all $q \in \mathbb{R}^d$; in other words, the graph of $\Lambda$ lies above some cone.
Denote $z = (q,s) \in Q$. For any $q$ and $t>0$ let $s^{(t)}(q)$ satisfy $F_s(q,s^{(t)}(q))+t = 0$. Then, $s^{(t)}(q)$ is uniquely defined for any $q$ and any $t>0$. Furthermore, let $K \subset \mathbb{R}^d$ be compact. Let $t_0>0$. There is a constant $C = C(K,F,t_0)$, $1 \leq C < \infty$, such that the following are true:
\begin{enumerate}
\item $F_s(q,s)$ is a monotonically increasing function of $s$.
\item For $\epsilon>0$, 
$
-{\nu \over \epsilon} \leq
F_s(q,\Lambda(q)+\epsilon) \leq -{1 \over \epsilon}.
$
\item ${1 \over t} \leq s^{(t)}(q)-\Lambda(q) \leq {\nu \over t}$.
\item Let $q \in K$ and $\epsilon := s-\Lambda(q)$. If $0 < \epsilon \leq {\nu \over t_0}$ then $\sigma(F''(q,s)) \subset [C_1,C_2\epsilon^{-2}]$, where $\sigma(\cdot)$ denotes the spectrum of a matrix.
\item $F(z) = -\log \Phi(z)$ with $-\Phi^{1 \over \nu}(z)$ convex and $\{z \; : \; \Phi(z)>0 \}$ is the interior of $Q$. 
\item For $x \in Q$ and $y \in \mathbb{R}^{d+1}$, put $r = \|x-y\|_x$. If $r<1$ then $y \in Q$ and also
\begin{align}
(1-r)^2 F''(x) \preceq F''(y) \preceq {1 \over (1-r)^2} F''(x). \label{e:Hessians with r}
\end{align}
\end{enumerate}
\end{lemma}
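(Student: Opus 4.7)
The plan is to prove the six items in order of increasing difficulty, leaving item 4 for last as the main obstacle. Item 1 is immediate from the strict convexity of $F$: $F_{ss}>0$ on $Q^\circ$, so $F_s(q,\cdot)$ is strictly increasing; this gives uniqueness of $s^{(t)}(q)$, and existence will follow once item 2 is established, since $F_s(q,\cdot)$ then ranges continuously through $(-\infty,0)$. Item 5 I would treat as a direct quote of the standard logarithmic-homogeneity result [NN, Prop.~2.3.3]: any $\nu$-self-concordant barrier admits the representation $F=-\log\Phi$ with $\Phi>0$ on $Q^\circ$, $\Phi=0$ on $\partial Q$, and $\Phi^{1/\nu}$ concave on $Q$.

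For item 2, the upper bound $F_s(q,\Lambda(q)+\epsilon)\leq -1/\epsilon$ follows from the standard barrier inequality $F'(x)[-u]\leq -1$ valid whenever $x+u\in\partial Q$ [NN, Prop.~2.3.2]: apply with $u=(0,-\epsilon)$, whose endpoint is $(q,\Lambda(q))\in\partial Q$, to obtain $\epsilon F_s\leq -1$. The lower bound $F_s\geq -\nu/\epsilon$ uses item 5: the one-dimensional restriction $\phi(t)=\Phi^{1/\nu}(q,\Lambda(q)+t)$ is concave on $[0,\infty)$ with $\phi(0)=0$, so the chord--tangent inequality yields $\phi'(t)\leq \phi(t)/t$; since $F_s=-\nu\phi'/\phi$, this gives $F_s\geq -\nu/t$. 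Item 3 is then obtained by inverting the strictly-monotone equation $F_s(q,s)=-t$: monotonicity yields unique existence of $s^{(t)}(q)$, and the range $1/t\leq \epsilon\leq \nu/t$ is just the inverse of item 2. For item 6, I would upgrade the scalar bound $\|y-z\|_y^2\leq \|y-z\|_z^2/(1-\|y-z\|_z)^2$ from Lemma~\ref{l:basic self concordant estimates} to the two-sided L\"owner comparison $(1-r)^2 F''(x)\preceq F''(y)\preceq F''(x)/(1-r)^2$ by the polarization principle invoked at the start of Section~2: run the ODE $g(t)=F''(x+t(y-x))[v,v]$ against the trilinear form $|F'''(z)[v,v,h]|\leq 2\sqrt{F''(z)[v,v]}\cdot F''(z)[h,h]$ for each fixed test vector $v$ and direction $h=y-x$, then integrate using \eqref{e:sc norm ineq 2} of Lemma~\ref{l:basic self concordant estimates} to bound $\|y-x\|_{x+t(y-x)}$. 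That $y\in Q^\circ$ is immediate from \eqref{e:sc norm ineq 3}, which furnishes a finite upper bound on $F(y)$.

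The main obstacle is item 4. For the upper eigenvalue bound I would combine two ingredients: (a)~for $q\in K$ compact and $\Lambda$ convex (hence locally Lipschitz on $K$), the Euclidean distance satisfies $\dist(x,\partial Q)\geq c(K,\Lambda)\,\epsilon$ at $x=(q,\Lambda(q)+\epsilon)$; and (b)~the ``inverse Dikin'' consequence of [NN, Thm.~2.3.9], which places $Q$ inside the Dikin ellipsoid of radius $\nu+2\sqrt\nu$ centered at $x$, giving $\lambda_{\max}(F''(x))\leq (\nu+2\sqrt\nu)^2/\dist(x,\partial Q)^2\leq C_2/\epsilon^2$. The lower bound $\lambda_{\min}\geq C_1$ is the genuinely delicate part: on any closed sub-strip $\{q\in K,\;\delta\leq s-\Lambda(q)\leq\nu/t_0\}$ with $\delta>0$ it follows from compactness, continuity, and strict positive definiteness of $F''$, but extending this uniformly as $\delta\to 0$ requires ruling out the degeneration of any direction of $F''$ at the boundary. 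I would attempt to handle this by chaining item~6 along the $s$-axis from a reference point $(q,\Lambda(q)+\epsilon_0)$ down to $(q,\Lambda(q)+\epsilon)$ in Newton-norm-controlled hops—the just-proven upper bound $\lambda_{\max}\leq C/\epsilon^2$ making each hop of relative $s$-length $\sim 1$ admissible—so that the propagated inequality $(1-r)^2 F''(\text{reference})\preceq F''(x)$ transmits a uniform positive lower bound across the whole strip. Carrying out this chaining argument cleanly, with an explicit constant that respects the strict convexity of $F$ and the epigraph structure of $Q$, is the step I expect to require the most technical care.
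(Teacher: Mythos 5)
Your approach to items 1, 3, and 5 matches the paper (strict convexity, inversion of the monotone relation, and a citation to [NN]). For item 2 you deduce the lower bound from the logarithmic homogeneity in item 5, whereas the paper cites (NN2.3.2) directly; both are fine. For item 6 you re-derive the two-sided L\"owner estimate from the self-concordance ODE, while the paper simply quotes [NN, Theorem 2.1.1]; again either works. The genuine problem is item 4, and specifically the lower eigenvalue bound.

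Your chaining argument for $\lambda_{\min}(F''(q,s))\ge C_1$ cannot deliver a uniform constant. Going from a fixed reference height $\epsilon_0$ down to $\epsilon$ in hops of constant \emph{relative} $s$-length requires $\Theta(\log(\epsilon_0/\epsilon))$ hops, and the upper bound $F_{ss}\lesssim \epsilon^{-2}$ (which you invoke to keep each hop admissible) is matched from below: applying $|F'(z)[u]|\le\sqrt{\nu}\|u\|_z$ with $u=(0,1)$ and item 2 gives $F_{ss}\ge F_s^2/\nu\ge 1/(\nu\epsilon^2)$. So the Newton length $r$ of each hop is bounded away from $0$, the per-hop factor $(1-r)^2$ is bounded away from $1$, and the propagated lower bound degrades like $(1-r)^{2\log(\epsilon_0/\epsilon)} \sim \epsilon^\gamma$ for some $\gamma>0$, vanishing as $\epsilon\to 0$. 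Chaining through item 6 can therefore only prove $\lambda_{\min}\gtrsim \epsilon^\gamma$, not $\lambda_{\min}\ge C_1$. Moreover your argument never uses the hypothesis $\Lambda(q)\ge\alpha\|q\|_2+\beta$ with $\alpha>0$, which should be a warning sign: the lower bound is simply false without it (e.g.\ $\Lambda\equiv 0$, $F=-\log s$ gives $\lambda_{\min}=0$). The paper's route is instead to use (NN2.3.9), $F''(z)[h^2]^{-1/2}\le q_z(h)\le(1+3\nu)F''(z)[h^2]^{-1/2}$, and to bound $q_z(h)$ from \emph{above} by $C/\|h\|_2$ using the cone condition: since $s\pm t\eta\ge\Lambda(q\pm tw)\ge\alpha(t\|w\|_2-\|q\|_2)+\beta$ for $z\pm th\in Q$, and $(q,s)$ ranges over a compact set, one gets $t\le C/\|h\|_2$ directly. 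That upper bound on $q_z(h)$ is exactly what converts into the uniform lower eigenvalue bound $\lambda_{\min}\ge C_1$, with no chaining and no $\epsilon$-dependence. Your upper bound $\lambda_{\max}\le C_2\epsilon^{-2}$ via the Dikin-ellipsoid inclusion is fine and is essentially the same as the paper's use of the local Lipschitz estimate $q_z(h)\gtrsim\epsilon/\|h\|_2$ combined with the left inequality of (NN2.3.9).
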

\begin{proof}
Recall the function $\pi_y(x)$ [NN] defined by
\begin{align}
\pi_y(x) = \inf \{ t \geq 0 \; : \; \overbrace{y+t^{-1}(x-y)}^w \in Q \}.
\end{align}
The fact that $s^{(t)}(q)$ is uniquely defined for any $t>0$ follows from properties 1--3. We now show all the numbered properties in order.
\begin{enumerate}
\item $F_s$ is a monotonically increasing function of $s$ because $F$ is strictly convex. 
\item Put $z = (r,\Lambda(r))$ and $x = (r,\Lambda(r)+\epsilon)$ (note that  $\pi_z(x) = 0$) into (NN2.3.7) to find $F_s(r,\Lambda(r)+\epsilon)(-\epsilon) \geq 1$. Conversely, put $y = \Lambda(r)^+$ and $x = \Lambda(r)+\epsilon$ into (NN2.3.2) to find $F_s(r,\Lambda(r)+\epsilon)(-\epsilon) \leq \nu$.
\item From the first two properties, $s^{(t)}(r)$ is uniquely defined. From property 2, $t+F_s(r,\Lambda(r)+\epsilon) \leq t-{1 \over \epsilon} < 0$ if $\epsilon<1/t$ thus $s^{(t)}(r) \geq \Lambda(r) + {1 \over t}$. Also from property 4, $t+F_s(r,\Lambda(r)+\epsilon) \geq t-{\nu \over \epsilon} > 0$ if $\epsilon > {\nu \over t}$.  Thus, $s^{(t)}(r) \leq \Lambda(r) + {\nu \over t}$.
\item Now we use (NN2.3.9):
\begin{align}
F''(z)[h^2]^{-0.5} & \leq q_z(h) \leq (1+3\nu)F''(z)[h^2]^{-0.5}, \label{e:NN2.3.9}
\end{align}
where $q_z(h) = \sup \{ t \; : \; z \pm th \in Q\}$. The function $q_z(h)$ measures the distance from point $z$ to some point $z\pm th$ on $\partial Q$ in the directions $\pm h$.

Since the graph of $\Lambda$ lies above some cone, i.e. $\Lambda(q) \geq \alpha\|q\|_2+\beta$, we can find an upper bound for $q_z(h)$. To do this, write $z = (q,s)$ and $h = (w,\eta)$. From $z \pm th \in Q$, we find that $s \pm t\eta \geq \Lambda(q \pm tw) \geq \alpha \|q \pm tw\|_2 + \beta \geq \alpha(t \|w\|_2-\|q\|_2)+\beta$. Picking the sign so that $\pm t\eta<0$ yields
\begin{align}
t \leq {s+\alpha\|q\|_2-\beta \over (\alpha \|w\|_2 + |\eta|)}
\leq {s+\alpha\|q\|_2-\beta \over C\|h\|_2}, \label{e:t upper bound}
\end{align}
where we have used that $\alpha\|w\|_2+|\eta|$ is a norm of $h = (w,\eta)$ that is equivalent to the Euclidian norm $\|h\|_2$ by norm equivalence in finite dimensions. Furthermore, the numerator of \eqref{e:t upper bound} is also bounded, since $z = (q,s)$ ranges in some compact set. This shows that $t$ is uniformly bounded by $C/\|h\|_2$, and hence
\begin{align}
q_z(h) & \leq {C \over \|h\|_2}. \label{e:q_z upper bound}
\end{align}

We now find a lower bound for $q_z(h)$. For $s > \Lambda(q)$, put $z = (q,s)$. Since $\Lambda$ is Lipschitz over $K$, the epigraph of $\Lambda$ contains a ball of radius $C(s-\Lambda(q)) = C\epsilon$ centered at $z$. Thus, 
\begin{align}
q_z(h) & \geq C\epsilon/\|h\|_2. \label{e:q_z lower bound}
\end{align}
The result follows from \eqref{e:NN2.3.9}, \eqref{e:q_z upper bound}, \eqref{e:q_z lower bound}.
\item This is part (iv) of [NN, Proposition 2.3.2]. 
\item This is [NN, Theorem 2.1.1].
\end{enumerate}
\end{proof}

\begin{corollary} \label{c:properties}
\begin{align}
\sigma(F''(Dz^*_0(t,x))) \subset [C_1,C_2 t^{2}], \label{e:spectrum of F''}
\end{align}
where $\sigma(M)$ denotes the spectrum of the matrix $M$.
\end{corollary}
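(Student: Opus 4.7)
The plan is to read off the corollary directly from property 4 of Lemma~\ref{l:properties}, after identifying the right $\epsilon$ via the central path equation.

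First, I would recall that the central path satisfies the algebraic equation \eqref{e:central path 2}, namely $t + F_s(\nabla u^*_0(t,x), s^*_0(t,x)) = 0$, for every $t > 0$ and a.e.\ $x \in \Omega$. Writing $\epsilon(t,x) := s^*_0(t,x) - \Lambda(\nabla u^*_0(t,x))$, so that $\epsilon$ is precisely the vertical distance of $Dz^*_0(t,x)$ to the boundary of $Q$, Lemma~\ref{l:properties}(3) gives the two-sided bound
\begin{align}
\frac{1}{t} \;\le\; \epsilon(t,x) \;\le\; \frac{\nu}{t}. \nonumber
\end{align}
In particular, for $t$ bounded below by some $t_0>0$, we have $\epsilon \le \nu/t_0$, which is exactly the hypothesis needed in Lemma~\ref{l:properties}(4).

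Next, to invoke Lemma~\ref{l:properties}(4), I need $\nabla u^*_0(t,x)$ to lie in a fixed compact set $K \subset \mathbb{R}^d$, independent of $t$ and $x$. Since $u^*_0(t,\cdot) \in W^{1,\infty}_0(\Omega)$ and $\Lambda(\nabla u^*_0) \le s^*_0 \le \Lambda(\nabla u^*_0)+\nu/t$, a uniform $L^\infty$ bound on $\nabla u^*_0$ for $t \ge t_0$ follows from the coercivity hypothesis $\Lambda(q) \ge \alpha\|q\|_2+\beta$ combined with a standard energy estimate on \eqref{e:energy central path}; this lets me set $K := \{q : \|q\|_2 \le R\}$ for some $R$ depending only on $\Omega$, $c$, $F$, $t_0$.

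With $q = \nabla u^*_0(t,x) \in K$ and $\epsilon \in [1/t,\nu/t]$, Lemma~\ref{l:properties}(4) gives $\sigma(F''(Dz^*_0(t,x))) \subset [C_1, C_2 \epsilon^{-2}]$. Using the lower bound $\epsilon \ge 1/t$ yields $\epsilon^{-2} \le t^2$, and absorbing constants into $C_2$ produces the desired inclusion $\sigma(F''(Dz^*_0(t,x))) \subset [C_1, C_2 t^2]$. The only subtle step is the uniform compactness of the range of $\nabla u^*_0(t,\cdot)$ — everything else is a direct substitution — and this is handled by the coercivity of $\Lambda$ together with the central-path energy bound.
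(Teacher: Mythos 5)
Your proof is correct and is exactly the argument the paper leaves implicit: identify $s^*_0(t,x) = s^{(t)}(\nabla u^*_0(t,x))$ via the central path equation \eqref{e:central path 2}, invoke Lemma~\ref{l:properties}(3) to get $\epsilon(t,x) \in [1/t,\,\nu/t]$, note $\epsilon \le \nu/t_0$ for $t \ge t_0$, and then read off the inclusion from Lemma~\ref{l:properties}(4) together with $\epsilon^{-2} \le t^2$. The paper gives no proof of the corollary, and this is the only sensible route given how Lemma~\ref{l:properties} is set up.

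The one place you are doing more work than the paper --- and where your argument is a little thin --- is the uniform compactness of $\{\nabla u^*_0(t,x) : t \ge t_0,\ x \in \Omega\}$. You propose to deduce an $L^\infty$ bound on $\nabla u^*_0$ from the coercivity $\Lambda(q) \ge \alpha\|q\|_2+\beta$ plus ``a standard energy estimate'' on \eqref{e:energy central path}. The energy $\cf(z^*_0(t),t) = \int_\Omega tc[z^*_0] + F(Dz^*_0)$ controls an integral of $F(Dz^*_0)$, not a pointwise supremum of $\|\nabla u^*_0\|_2$, so this does not obviously close without further input. The paper instead treats this as a standing regularity assumption: at the opening of Section~4 it posits that the central path exists and is unique, and Definition~\ref{d:regular}(3) later imposes $D\partial_{t^{-1}}z_0^* \in L^\infty([t_0,\infty];W^{\alpha,\infty}(\Omega))$, which together with a bound at $t=t_0$ gives the needed uniform $L^\infty$ control directly. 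So your overall derivation matches the paper's intent, but the compactness of $K$ is better viewed as assumed than proved from the energy estimate.
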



For self-concordant functions, damped Newton iterations can be proven to converge, as follows.

\begin{lemma} \label{l:boyd}
Let $G(z)$ be convex with a minimum at $z^*$.
Define $\cL(\delta) = \{ w \; : \; G(w) - G(z^*) \leq \delta \}$.
Assume that $G(z)$ is standard self-concordant on $\cL(\delta)$.
Let $z^{(0)} \in \cL(\delta)$ be given, we define the {\bf suboptimality gap} as $G(z^{(0)}) - G(z^*) \leq \delta$. Define the sequence $z^{(k)}$ for $k \geq 1$ by damped Newton iterations. Assume we stop the damped Newton iterations at the first iteration $k$ such that $G(z^{(k)}) - G(z^*) < \epsilon$. Then,
\begin{align}
k & \leq C(G(z^{(0)}) - G(z^*)) + \log_2 \log_2 \epsilon^{-1}.
\end{align}
\end{lemma}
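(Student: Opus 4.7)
The plan is the classical two-phase analysis of damped Newton for standard self-concordant functions, split at a threshold $\tau \in (0,1)$ (say $\tau = 1/4$) on the Newton decrement $\lambda_G(z) = \sqrt{G'(z)^T [G''(z)]^{-1} G'(z)}$. In the damped phase, $\lambda_G(z^{(k)}) \geq \tau$; in the quadratic phase, $\lambda_G(z^{(k)}) < \tau$.

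For the damped phase, the damped Newton step is $z^{(k+1)} = z^{(k)} - (1+\lambda_G(z^{(k)}))^{-1} [G''(z^{(k)})]^{-1} G'(z^{(k)})$, so $\|z^{(k+1)} - z^{(k)}\|_{z^{(k)}} = \lambda_G(z^{(k)})/(1+\lambda_G(z^{(k)})) < 1$. Plugging this into \eqref{e:sc norm ineq 3}, a short algebraic simplification using $\psi(-\alpha) = -\alpha - \log(1-\alpha)$ with $\alpha = \lambda/(1+\lambda)$ collapses the right-hand side to exactly $G(z^{(k)}) - \psi(\lambda_G(z^{(k)}))$, where $\psi$ is as in Lemma~\ref{l:basic self concordant estimates}. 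Hence each step decreases $G$ by at least $\psi(\tau) > 0$, and since $G$ is monotone decreasing, all iterates remain in $\cL(\delta)$ so self-concordance applies throughout the trajectory. Telescoping bounds the damped phase by at most $(G(z^{(0)}) - G(z^*))/\psi(\tau) = C(G(z^{(0)}) - G(z^*))$ iterations.

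For the quadratic phase, once $\lambda_G(z^{(k)}) < \tau$, I would use the Hessian comparison \eqref{e:Hessians with r} with $r = \lambda_G(z^{(k)})/(1+\lambda_G(z^{(k)}))$, combined with the fundamental theorem of calculus applied to $G'$ along the damped Newton direction, to derive the quadratic contraction $\lambda_G(z^{(k+1)}) \leq 2\lambda_G(z^{(k)})^2$ (choosing $\tau$ small enough). Iterating gives the doubly-exponential bound $\lambda_G(z^{(k+j)}) \leq 2^{-2^{j+1}}$. Combined with the suboptimality estimate $G(z) - G(z^*) \leq \lambda_G(z)^2$, valid for small $\lambda_G$ and obtained by setting $y=z^*$ in \eqref{e:sc norm ineq 1} and bounding $\|z^*-z\|_z$ in terms of $\lambda_G(z)$ via Lemma~\ref{l:basic self concordant estimates}, the stopping criterion $G - G^* < \epsilon$ is reached after $j = O(\log_2 \log_2 \epsilon^{-1})$ additional steps. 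Summing the bounds from the two phases gives the claimed inequality.

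The main obstacle is the algebraic collapse in the damped phase (making sure the $\psi$-term and the linear term combine cleanly to $-\psi(\lambda)$) together with the careful tracking of constants through \eqref{e:Hessians with r} to establish quadratic contraction for the damped (not pure) Newton step. Both are standard in self-concordant calculus but require attention to avoid off-by-one factors when identifying the transition between the two phases.
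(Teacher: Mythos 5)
Your proposal is a correct reconstruction of the standard two-phase analysis, but the paper does not actually carry out this argument: its ``proof'' is a one-line citation to \cite[(9.56)]{boyd2004convex}, which is precisely the damped-phase plus quadratic-phase bound you derive (and the paper notes the explicit constant $C \leq 375$ from that source). So you have, in effect, supplied the proof the paper outsourced. Your algebra in the damped phase is right: with step $\|z^{(k+1)}-z^{(k)}\|_{z^{(k)}}=\lambda/(1+\lambda)$ and linear term $-\lambda^2/(1+\lambda)$, the bound from \eqref{e:sc norm ineq 3} collapses to $G(z^{(k+1)}) \leq G(z^{(k)}) - \psi(\lambda)$, yielding the decrease of at least $\psi(\tau)$ per step and the $C(G(z^{(0)})-G(z^*))$ count. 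Two minor points worth tightening if you write this out fully: (i) the claim that all iterates stay in $\cL(\delta)$, so that the self-concordance hypothesis applies along each segment, requires a short continuation argument (the Newton direction initially decreases $G$, so the segment cannot leave the convex set $\cL(\delta)$ before reaching $z^{(k+1)}$); and (ii) the suboptimality bound from \eqref{e:sc norm ineq 1} with $y=z^*$ actually gives $G(z)-G(z^*) \leq \lambda^2/(1-\lambda)$ rather than $\lambda^2$ exactly, which is harmless for the $\log\log$ count but should be stated precisely.
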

\begin{proof}
This is \cite[(9.56)]{boyd2004convex}, which also reveals that $C \leq 375$. A much smaller value of $C$ can be estimated by following [NN, section 2.2.3]. In double precision arithmetic, the expression $\log_2 \log_2 \epsilon^{-1}$ can be bounded by $6$.
\end{proof}

\section{Quadrature}

\begin{definition}
Let $\hat{K} \subset \mathbb{R}^d$ be the ``reference simplex''. For each $h>0$, assume $T_h$ is a triangulation of $\Omega$. For each $K \in T_h$, we associate an affine map $f_K(x) = A_Kx+b_K$ such that $K = f_K(\hat{K})$. We say that $T_h$ is quasi-uniform with parameter $\rho \geq 0$ if $\vvvert A_K \vvvert \leq h$ and $\vvvert A_K^{-1}\vvvert^{-1} \leq \rho h$.
\end{definition}

Assume we have a quadrature rule for the reference simplex:
\begin{align}
I_{\hat{K}} \eta(x) & := \sum_{j=1}^{\beta} \omega_j \eta(x_{\hat{K},j}).
\end{align}
We shall require that the quadrature weights satisfy
\begin{align}
\omega_j > 0. \label{e:positive weights}
\end{align}
From this, we obtain a quadrature rule for $K \in T_h$:
\begin{align}
\int_K^{(h)} \eta(x) & = |\det A_K| I_{\hat{K}} \eta(A_K x + b_K).
\end{align}
Then, if $E$ is a union of simplices in $T_h$,
\begin{align}
\int_E^{(h)} \eta(x) & = \sum_{\substack{K \in T_h \\ K \subset E}} \int_K^{(h)} \eta|_K.
\end{align}
Here, the notation $\eta|_K$ indicates that we first restrict $\eta$ to $K$, and it suffices for $\eta$ to be continuous on each $K \in T_h$. This allows one to integrate a function $\eta$ which may have jump discontinuities on edges of $T_h$. In particular, if $E$ is the union of simplices in $T_h$, then
\begin{align}
\int_E^{(h)} 1 & = |E|.
\end{align}
We shall also denote the exact integral by $\int_E^{(0)} = \int_E$.
Because of \eqref{e:positive weights}, we may write the quadrature rule as
\begin{align}
\int_\Omega^{(h)} \eta & = \sum_{\substack{K \in T_h \\ k=1,\ldots,\beta}} \omega_{K,k} y_{K,k} \text{ where}\\
\omega_{K,k} & = |\det A_K|\omega_k, \; x_{K,k} = A_K x^{(k)}+b_K \text{ and } y_{K,k} = \eta|_K(x_{K,k}).
\end{align}
Thus,
\begin{align}
C_{\min} h^d \leq \{\omega_{K,k},|K|\} \leq C_{\max}h^d, \label{e:quadrature weights}
\end{align}
for some constants $0<C_{\min}<1<C_{\max}<\infty$.
We thus have Jensen's inequality
\begin{align}
\psi\left(
{1 \over |E|} \int_{E}^{(h)} \eta
\right) &
\leq
{1 \over |E|}\int_{E}^{(h)} \psi(\eta),
\end{align}
for any convex function $\psi$. We further define the discrete norms
\begin{align}
\|\eta\|_{L_h^p(E)}^p & = \int_E^{(h)} |\eta|^p \text{ for } 1 \leq p < \infty, \\
\|\eta\|_{L_h^\infty(E)} & = \sup_{\substack{K \in T_h \\ K \subset E}} |\eta(x_{K,k})|.
\end{align}
We then have the discrete Hölder inequalities:
\begin{align}
\int_E^{(h)} |\eta| \leq \|\eta\|_{L^p_h(E)} \|\eta\|_{L^{p'}_h(E)} \text{ where } {1 \over p} + {1 \over p'} = 1.
\end{align}
The discretizations of $\cF$ and $\cf$ are:
\begin{align}
\cF_h(w) & = \int_{\Omega}^{(h)} F(w(x)) \text{ and } \\
\cf_h(w) & = \int_{\Omega}^{(h)} tc(x)[w(x)] + F(Dw(x)).
\end{align}
Let $V_h \subset W^{1,\infty}(\Omega) \times L^{\infty}(\Omega)$ be a piecewise polynomial space, such that $DV_h$ is of degree $\alpha-1$. We define
\begin{align}
\cQ_h = \{ v \in V_h \; : \; v(x_{K,k}) \in Q \text{ for all } K \in T_h, \; k=1,\ldots,\beta \}.
\end{align}
Then, we define the discretizations
\begin{align}
z_h^*(t) & = \argmin_{z \in V_h} \cf_h(z,t) \text{ and} \\
z_{h,t_1,H}^*(t) & = \argmin_{z \in z_h^*(t_1) + V_H} \cf_h(z,t).
\end{align}
Note that
\begin{align}
\int_\Omega^{(h)} tc[\phi_h] + F'(Dz_h^*(t))[D\phi_h] & = 0 \text{ for all } \phi_h \in V_h \text{ and} \\
\int_\Omega^{(h)} tc[\phi_H] + F'(Dz_{h,t_1,H}^*(t))[D\phi_H] & = 0 \text{ for all } \phi_H \in V_H. \label{e:discrete weak form 2}
\end{align}
Note that \eqref{e:discrete weak form 2} uses the quadrature $\int_\Omega^{(h)}$ on the fine grid $T_h$, but the test function $\phi_H \in V_H$ is on the coarse grid $T_H$, $H \geq h$.

\section{Self-concordance in function spaces}

We were not able to find the system \eqref{e:central path 1}, \eqref{e:central path 2} in the literature on nonlinear elliptic partial differential equations, but see \cite{gilbarg1977elliptic}. In the present paper, we shall assume that the central path $z^*(t,x) = z^*_0(t,x)$ exists and is unique for $t>0$ and solves \eqref{e:energy central path}, \eqref{e:weak central path}, \eqref{e:strong central path}, \eqref{e:central path 1}, \eqref{e:central path 2}.

\begin{lemma} \label{l:filter}
Let $h \geq 0$.
Assume $F(z)$ is a self-concordant barrier for $Q$ with parameter $\nu$. Then,
\begin{align}
\int_\Omega^{(h)} c[z_{h,t_0,H}^*(t)] - \inf_{z \in (z_h(t_0) + V_H) \cap \cQ} \int_\Omega^{(h)} c[z] \leq {\nu |\Omega| \over t}.
\end{align}
\begin{proof}
\begin{align}
\int_\Omega^{(h)} c[z_{h,t_0,H}^*(t) - w] 
& = {1 \over t}\int_{\Omega}^{(h)} F'(Dz_{h,t_0,H}^*(t))[D(w-z_{h,t_0,H}^*(t))] \\ 
& \leq {1 \over t} \int_{\Omega}^{(h)} \nu = {\nu |\Omega| \over t},
\end{align}
where we have used \eqref{e:weak central path} and (NN2.3.2).
The result follows by taking an infimum over admissible $w(x)$.
\end{proof}
\end{lemma}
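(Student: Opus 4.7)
The plan is to use the first-order optimality condition for the shifted minimizer $z_{h,t_0,H}^*(t)$ tested against a feasible displacement, then invoke the defining inequality of a self-concordant barrier pointwise at quadrature nodes, and finally take an infimum.

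First, I would invoke the discrete weak formulation \eqref{e:discrete weak form 2}, which reads $\int_\Omega^{(h)} tc[\phi_H] + F'(Dz_{h,t_0,H}^*(t))[D\phi_H] = 0$ for every $\phi_H \in V_H$. Given any admissible competitor $w \in (z_h^*(t_0) + V_H) \cap \cQ$, the displacement $\phi_H := w - z_{h,t_0,H}^*(t)$ lies in $V_H$, since both $w$ and $z_{h,t_0,H}^*(t)$ share the same affine anchor $z_h^*(t_0)$. Substituting this choice of $\phi_H$ into \eqref{e:discrete weak form 2} and dividing by $t$ yields the identity
\begin{align*}
\int_\Omega^{(h)} c[z_{h,t_0,H}^*(t)-w] \;=\; \frac{1}{t}\int_\Omega^{(h)} F'(Dz_{h,t_0,H}^*(t))\bigl[D(w-z_{h,t_0,H}^*(t))\bigr].
\end{align*}

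Second, I would bound the integrand pointwise using the characteristic property of a $\nu$-self-concordant barrier (NN2.3.2): for $y \in Q^\circ$ and $x \in \bar{Q}$, one has $F'(y)[x-y] \leq \nu$. At each quadrature node $x_{K,k}$, feasibility of $w$ in $\cQ$ gives $Dw(x_{K,k}) \in Q$, while the fact that $z_{h,t_0,H}^*(t)$ minimizes the barrier-augmented energy forces $Dz_{h,t_0,H}^*(t)(x_{K,k}) \in Q^\circ$. Thus (NN2.3.2) applies at every quadrature node. Integrating with positive weights (see \eqref{e:positive weights}) gives the upper bound $\int_\Omega^{(h)} \nu = \nu|\Omega|$ on the right-hand integral, so the displayed quantity above is at most $\nu|\Omega|/t$. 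Taking the infimum over admissible $w$ on the left-hand side completes the proof.

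The main delicacy is the pointwise-at-quadrature-nodes interpretation of the feasibility constraint $w \in \cQ$. If $\cQ$ is understood in the continuum sense, one cannot immediately invoke (NN2.3.2) at the quadrature points without additional pointwise control; the clean route is to read admissibility through the discrete admissible set $\cQ_h$ introduced earlier, which is precisely the pointwise condition at quadrature nodes. Once that identification is made, the proof is a faithful lifting of the Nesterov–Nemirovsky finite-dimensional filter bound, one quadrature node at a time, with the factor $|\Omega|$ emerging from $\int_\Omega^{(h)} 1$.
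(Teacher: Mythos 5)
Your proof is correct and follows essentially the same route as the paper: apply the optimality condition for the shifted minimizer to the displacement $w - z_{h,t_0,H}^*(t)$, bound the resulting term pointwise by $\nu$ via (NN2.3.2), and take the infimum. The only cosmetic differences are that you cite the discrete stationarity condition \eqref{e:discrete weak form 2} (which is arguably the more precise reference here, since the paper cites the continuum equation \eqref{e:weak central path}) and that you spell out why $w - z_{h,t_0,H}^*(t) \in V_H$; the ``delicacy'' you flag about $\cQ$ versus $\cQ_h$ is resolved by noting that $Q$ is closed and $Dw$ is polynomial (hence continuous) on each simplex, so $w \in \cQ$ already forces $Dw(x_{K,k}) \in Q$ at every quadrature node.
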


\begin{lemma}
Let $z \in \cQ \cap L^{\infty}$ and $q \in L^{\infty}$ and $h \geq 0$. Then,
\begin{align}
(\cF'_h(z)[q])^2 & \leq |\Omega| \nu \cF''_h(z)[q^2].
\end{align}
\end{lemma}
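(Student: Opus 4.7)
The plan is to reduce the integral inequality to the pointwise self-concordant barrier inequality \eqref{e:self-concordant barrier} by applying a discrete Cauchy--Schwarz inequality at the level of the quadrature rule. The whole argument has the flavor of ``interchange an inequality with an integral''.

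First, I would apply the barrier inequality pointwise. Since $z \in \cQ \cap L^\infty$, the value $z(x)$ lies in $Q$ for almost every $x \in \Omega$ (and, implicitly, at every quadrature node $x_{K,k}$, using that the arguments of $\cF_h$ in the paper are such that pointwise evaluation is well defined). Squaring \eqref{e:self-concordant barrier} then gives the pointwise bound
\begin{align}
(F'(z(x))[q(x)])^2 \leq \nu\, F''(z(x))[q(x)^2]. \nonumber
\end{align}

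Second, I would invoke the discrete Cauchy--Schwarz-type inequality satisfied by the quadrature rule. Because the weights are positive (cf. \eqref{e:positive weights}), the Jensen inequality stated earlier applied to the convex function $\psi(y)=y^2$ yields
\begin{align}
\left(\int_\Omega^{(h)} \eta\right)^2 \leq |\Omega|\int_\Omega^{(h)} \eta^2, \nonumber
\end{align}
where I used $\int_\Omega^{(h)} 1 = |\Omega|$. This is exactly the weighted Cauchy--Schwarz inequality $(\sum_i \omega_i a_i)^2 \leq (\sum_i \omega_i)(\sum_i \omega_i a_i^2)$ applied to the quadrature.

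Combining these two ingredients with $\eta(x) = F'(z(x))[q(x)]$ gives
\begin{align}
(\cF_h'(z)[q])^2
&= \left(\int_\Omega^{(h)} F'(z)[q]\right)^2
\leq |\Omega| \int_\Omega^{(h)} (F'(z)[q])^2 \nonumber\\
&\leq |\Omega|\nu \int_\Omega^{(h)} F''(z)[q^2]
= |\Omega|\nu\,\cF_h''(z)[q^2], \nonumber
\end{align}
which is the desired estimate. There is no real obstacle; the only mild subtlety is ensuring that the pointwise barrier inequality applies at each quadrature node, which follows because $z \in \cQ \cap L^\infty$ and the self-concordant barrier calculus from Section 2 supplies the pointwise bound uniformly on $Q^\circ$.
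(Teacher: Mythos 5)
Your proof is correct and uses the same two ingredients as the paper's proof: the pointwise self-concordant barrier bound $|F'(z)[q]| \leq \sqrt{\nu\,F''(z)[q^2]}$ and a Jensen/Cauchy--Schwarz step at the quadrature level with total weight $|\Omega|$. The only difference is the order: the paper inserts the pointwise bound under the integral first and then applies Jensen to $\bigl(\int_\Omega^{(h)} \sqrt{\nu F''[q^2]}\bigr)^2$, whereas you first apply the weighted Cauchy--Schwarz to $\bigl(\int_\Omega^{(h)} F'[q]\bigr)^2$ and then the pointwise bound; both are equivalent.
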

\begin{proof}
\begin{align}
|\cF'_h(z)[q]| & = \left|\int_{\Omega}^{(h)} F'(z)[q]\right| \leq \int_{\Omega}^{(h)} \sqrt{\nu F''(z)[q^2]}.
\end{align}
The conclusion follows from Jensen's inequality.
\end{proof}

If we also had
\begin{align}
|\cF'''(z)[q^3]| & \leq C \cF''(z)[q^2]^{1.5}, \label{e:third derivative}
\end{align}
for all $z \in \cQ$ and $q \in L^{\infty}$, then we would indeed have a self-concordant barrier on $\cQ$, which we have already noted is impossible in infinite dimensions. We must therefore find some relaxation of \eqref{e:third derivative}. We begin with a crude estimate that is useful as a ``fallback''.

\begin{lemma} \label{l:numerical barrier}
Let $C_1,\rho$ be as per \eqref{e:quadrature weights}.
The function $C_1^{-1} h^{-d}\cF_h$ is a self-concordant barrier for $\cQ_h$ with parameter 
\begin{align}
\nu(h) = O(h^{-d}). \label{e:crude estimate}
\end{align}
\end{lemma}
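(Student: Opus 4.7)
The plan is to view $C_1^{-1} h^{-d} \cF_h$ as a positively weighted finite sum of pullbacks of the finite-dimensional barrier $F$ along the quadrature-node evaluation maps, and then to invoke three standard facts from self-concordant calculus: (a) composition of a $\nu$-self-concordant barrier with an affine map is again a $\nu$-self-concordant barrier on the preimage of the domain; (b) scaling a standard self-concordant function by $\alpha \geq 1$ preserves standard self-concordance and multiplies the barrier parameter by $\alpha$; (c) a finite sum of self-concordant barriers is a self-concordant barrier on the intersection of their domains, with parameters adding.

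First I would rewrite
\begin{align*}
C_1^{-1} h^{-d}\cF_h(w) = \sum_{\substack{K \in T_h \\ k=1,\ldots,\beta}} \alpha_{K,k}\, F(w(x_{K,k})), \qquad \alpha_{K,k} := C_1^{-1} h^{-d}\, \omega_{K,k}.
\end{align*}
The lower bound in \eqref{e:quadrature weights} gives $\alpha_{K,k} \geq 1$ for every $(K,k)$. Each summand is the composition of the linear evaluation $w \mapsto w(x_{K,k})$, $V_h \to \mathbb{R}^{d+1}$, with the $\nu$-self-concordant barrier $F$ on $Q$, where $\nu = O(d)$ by [NN, Theorem 2.5.1]. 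By (a), the summand $w \mapsto F(w(x_{K,k}))$ is itself a $\nu$-self-concordant barrier on the slab $\{w \in V_h : w(x_{K,k}) \in Q\}$; by (b), scaling by $\alpha_{K,k}$ produces an $(\alpha_{K,k}\nu)$-self-concordant barrier on the same set.

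Applying (c) to the finite sum, $C_1^{-1} h^{-d}\cF_h$ is a self-concordant barrier on
\begin{align*}
\bigcap_{K,k} \{w \in V_h : w(x_{K,k}) \in Q\} = \cQ_h
\end{align*}
with parameter
\begin{align*}
\nu(h) = \nu \sum_{K,k} \alpha_{K,k} = \frac{\nu}{C_1 h^d}\sum_{K,k} \omega_{K,k} = \frac{\nu|\Omega|}{C_1 h^d} = O(h^{-d}),
\end{align*}
using $\sum_{K,k} \omega_{K,k} = \int_\Omega^{(h)} 1 = |\Omega|$ together with the $h$-independence of $\nu$, $|\Omega|$, and $C_1$. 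This is exactly \eqref{e:crude estimate}. The only step requiring any care is (b), which follows by multiplying $|F'''[u^3]| \leq 2(F''[u^2])^{3/2}$ through by $\alpha$ and using $\alpha \leq \alpha^{3/2}$ for $\alpha \geq 1$; I do not foresee a real obstacle, and the crude $h^{-d}$ growth is morally what one should expect from assigning each of the $\Theta(h^{-d})$ quadrature nodes its own $O(1)$-sized share of the barrier parameter.
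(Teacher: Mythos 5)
Your argument is correct and is exactly the argument the paper has in mind: it writes the same decomposition $C_1^{-1}h^{-d}\cF_h(w) = \sum_{K,k} C_1^{-1}h^{-d}\omega_{K,k}\,F(w(x_{K,k}))$, observes the coefficients lie in $[1, C_2/C_1]$, and then invokes ``self-concordant calculus'' as a black box. You have simply unpacked that black box into its three standard constituents (affine composition, scaling by $\alpha \geq 1$, finite summation), and your summation of the weights $\sum_{K,k}\omega_{K,k} = |\Omega|$ gives a slightly sharper constant than counting the $\Theta(h^{-d})$ nodes, but the conclusion and the route are the same.
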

\begin{proof}
\begin{align}
C_1^{-1}h^{-d}\cF_h(w) = \sum_{\substack{K \in T_h \\ k=1,\ldots,k}} C_1^{-1}h^{-d}\omega_{K,k} F(w(x_{K,k})).
\end{align}
The coefficients $C_1^{-1}h^{-d}\omega_{K,k}$ of the sum are bounded between $1$ and $C_2/C_1$, so the result follows by self-concordant calculus.
\end{proof}
Any algorithm that relies on the estimate \eqref{e:crude estimate} typically results in $\tilde{O}(h^{-0.5d}) = \tilde{O}(\sqrt{n})$ damped Newton iterations when short $t$ steps are used. We now discuss a more nuanced theory with better iteration counts for some situations.

\begin{definition}[Regularity hypotheses] \label{d:regular}
Denote $\partial_{t^{-1}} = {\partial \over \partial t^{-1}} = t {\partial \over \partial t} = t \partial_t$.
Assume $T_h$ is a quasi-uniform triangulation of $\Omega$.
Denote by $\Pi_h$ the interpolation operator for the piecewise polynomial space $V_h \subset W^{1,\infty}(\Omega) \times L^{\infty}(\Omega)$; assume $DV_h$ is of degree $\alpha-1$, and that $\alpha \geq d$.
We say that $(T_h,c,F)$ is regular if the following properties are satisfied.
\begin{enumerate}
\item $F$ is a self-concordant barrier for $Q$ with parameter $\nu$, and $|\Lambda(q)| \geq \alpha_1 \|q\|_2 + \alpha_2$ for some constants $\alpha_1>0$ and $\alpha_2 \in \mathbb{R}$.
\item The uniform discrete reverse Hölder inequality. There is a function $C_{RH}(z_h^*(t))$ of $(h,t)$ such that, for all $0 < h \leq H$, $K \in T_H$, $t_0 \leq t < \infty$, polynomial $q$ such that $Dq$ has degree $\alpha-1$, then
\begin{align}
& \left\|\sqrt{F''(Dz_{h}^*(t))[(Dq)^2]}\right\|_{L^{\infty}_h(K)} \\
& \leq
C_{RH}(z_{h}^*(t))|K|^{-1}
\left\|\sqrt{F''(Dz_{h}^*(t))[(Dq)^2]}\right\|_{L^{1}_h(K)}. \nonumber
\end{align}
The function $C_{RH}(z_h^*(t))$ grows no faster than polylogarithmically in $(t,h)$.
\item Smoothness:
\begin{align}
D\partial_{t^{-1}} z_0^* \in L^{\infty}([t_0,\infty];W^{\alpha,\infty}(\Omega)).
\end{align}
\item Optimal approximation property:
\begin{align}
\|D z_0^* - D z_h^*\|_{L^{\infty}([t_0,\infty]\times\Omega)} 
+
 \|D\partial_{t^{-1}}z_0^* - D \partial_{t^{-1}}z_h^*\|_{L^{\infty}([t_0,\infty]\times\Omega)}
& \leq Ch^\alpha.
\end{align}
\end{enumerate}
\end{definition}

In complex analysis, if a function $f$ is analytic at a point $z^*$, then one may expand it into a power series. If $w$ is within the region of convergence of this power series, $f$ is also analytic at $w$, and one may find a new power series expansion, this time at $w$, to expand the domain of analyticity of $f$; this procedure is called ``analytic continuation''. It turns out that if $G(z(x))$ satisfies a certain reverse Hölder inequality for some $z \in \cQ_h$, then $G(w(x))$ will satisfy a related Hölder inequality if $w \in \cQ_h$ is in a suitable neighborhood of $z$, and this procedure can be used to propagate the reverse Hölder inequality to larger subsets of $\cQ_h$.

\begin{theorem}[Reverse Hölder continuation] \label{t:RH continuation}
Let $z^* \in \cQ_h$, $1 \geq H \geq h \geq 0$ and $t>0$. Consider the affine space $A = z^*+V_{H}$ and assume that $z^* = z^*_{A}(t)$ minimizes $z \to \cf_h(z,t)$ over $z \in A$. Assume that the following reverse Hölder inequality holds:
\begin{align}
\|\sqrt{F''(Dz^*)[(Dv)^2]}\|_{L^{\infty}_h(K)} & \leq C_{RH}(z^*) 
|K|^{-1}\|\sqrt{F''(Dz^*)[(Dv)^2]}\|_{L^{1}_h(K)}, 
\end{align}
for all $K \in T_{\hH}$, $1 \geq \hH \geq h$, and $v$ a polynomial such that $Dv$ has degree $\alpha-1$. Let $0 \leq \delta < 1$. Let $w \in A$ such that
\begin{align}
\cf_h(w,t) - \cf_h(z^*,t) \leq \delta (1+\sqrt{2|\Omega|})^{-2}C_{RH}^{-2}(z^*)C_{\min}^{2}H^{2d} =: \beta. \label{e:beta}
\end{align}
Then, the following reverse Hölder inequality also holds:
\begin{align}
\|\sqrt{F''(Dw)[(Dv)^2]}\|_{L^{\infty}_h(K)} & \leq C_{RH}(A,t,\beta) 
|K|^{-1}\|\sqrt{F''(Dw)[(Dv)^2]}\|_{L^{1}_h(K)} \\
\text{where } C_{RH}(A,t,\beta) & \leq C_{RH}(z^*)(1-\sqrt{\delta})^{-2}, \label{e:C_RH(w)}
\end{align}
for all $K \in T_{\hH}$, $1 \geq \hH \geq h$.
\end{theorem}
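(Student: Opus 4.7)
The strategy is to exploit the reverse Hölder inequality at $z^*$ to amplify a global energy bound into a uniform pointwise control of $r(x) := \|D(w-z^*)(x)\|_{Dz^*(x)}$, and then invoke the Hessian comparison in Lemma \ref{l:properties}(6) to transfer the reverse Hölder inequality from $z^*$ to $w$. I would begin with the observation that, since $z^*$ minimizes $\cf_h(\cdot,t)$ over $A = z^*+V_H$ and $v := w-z^* \in V_H$, the first-order condition \eqref{e:discrete weak form 2} gives $\int_\Omega^{(h)} F'(Dz^*)[Dv] = -t\int_\Omega^{(h)} c[v]$. Applying \eqref{e:sc norm ineq 1} of Lemma \ref{l:basic self concordant estimates} pointwise at each quadrature point $x$ (where $F$ is standard self-concordant), multiplying by the positive weight $\omega_{K,k}$, summing, and substituting the first-order condition, yields
\begin{align*}
\int_\Omega^{(h)} \psi(r) \leq \cf_h(w,t)-\cf_h(z^*,t) \leq \beta.
\end{align*}
This pointwise application sidesteps the fact that $\cf_h$ is not globally standard self-concordant (quadrature weights $\omega_{K,k}$ can be much smaller than one).

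Next, the elementary inequality $\psi(\alpha) \geq \alpha^2/(2(1+\alpha))$, verifiable by comparing derivatives at the origin, converts the previous display into $\int_\Omega^{(h)} r^2 \leq 2(1+r_{\max})\beta$ where $r_{\max} := \sup_x r(x)$. Setting $f(x) := \sqrt{F''(Dz^*(x))[(Dv(x))^2]} = r(x)$ and invoking the reverse Hölder inequality at $z^*$ on any $K \in T_H$ containing a given quadrature point, followed by Cauchy--Schwarz globally over $\Omega$, one obtains
\begin{align*}
r_{\max} \leq C_{RH}(z^*)|K|^{-1}\|f\|_{L^1_h(\Omega)} \leq C_{RH}(z^*)|K|^{-1}|\Omega|^{1/2}\|f\|_{L^2_h(\Omega)}.
\end{align*}
Combining with $|K|\geq C_{\min}H^d$ yields the self-referential estimate $r_{\max}^2 \leq 2M^2\beta(1+r_{\max})$ where $M := C_{RH}(z^*)C_{\min}^{-1}H^{-d}|\Omega|^{1/2}$. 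A quadratic inversion, together with the specific choice of $\beta$ in \eqref{e:beta}, delivers $r_{\max} \leq \sqrt{\delta}$; the precise prefactor $(1+\sqrt{2|\Omega|})^{-2}$ is calibrated exactly so that this inversion closes.

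With $r_{\max}\leq\sqrt{\delta}<1$ at every quadrature point, Lemma \ref{l:properties}(6) gives the two-sided comparison $(1-\sqrt{\delta})^2 F''(Dz^*(x)) \preceq F''(Dw(x)) \preceq (1-\sqrt{\delta})^{-2} F''(Dz^*(x))$. For arbitrary $K \in T_{\hat H}$ with $\hat H \geq h$ and arbitrary admissible polynomial $q$, two applications of this comparison (one at each end) sandwich the reverse Hölder inequality at $z^*$:
\begin{align*}
\|\sqrt{F''(Dw)[(Dq)^2]}\|_{L^\infty_h(K)}
&\leq (1-\sqrt{\delta})^{-1}\|\sqrt{F''(Dz^*)[(Dq)^2]}\|_{L^\infty_h(K)} \\
&\leq (1-\sqrt{\delta})^{-1} C_{RH}(z^*)|K|^{-1}\|\sqrt{F''(Dz^*)[(Dq)^2]}\|_{L^1_h(K)} \\
&\leq (1-\sqrt{\delta})^{-2} C_{RH}(z^*)|K|^{-1}\|\sqrt{F''(Dw)[(Dq)^2]}\|_{L^1_h(K)},
\end{align*}
which is exactly the desired conclusion with $C_{RH}(A,t,\beta) \leq C_{RH}(z^*)(1-\sqrt{\delta})^{-2}$. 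The main obstacle lies in the second step: the $|\Omega|^{1/2}$ factor from Cauchy--Schwarz combined with the $H^{-d}$ loss from the reverse Hölder forces the $H^{2d}$ scaling of $\beta$, and careful bookkeeping through the nonlinear inequality $r_{\max}^2 \leq 2M^2\beta(1+r_{\max})$ is what pins down the precise constants appearing in \eqref{e:beta}.
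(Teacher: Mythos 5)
Your opening step --- using the first-order optimality of $z^*$ in $A$ to write $\cf_h(w,t)-\cf_h(z^*,t)=\int_\Omega^{(h)}\bigl(F(Dw)-F(Dz^*)-F'(Dz^*)[D(w-z^*)]\bigr)$ and then applying \eqref{e:sc norm ineq 1} pointwise at each quadrature node, sidestepping the global non-self-concordance of $\cf_h$ --- matches the paper, and your closing step (the two-sided Hessian comparison from Lemma \ref{l:properties}(6) yielding the factor $(1-\sqrt\delta)^{-2}$) is also correct. The gap is in the middle, where you convert $\int_\Omega^{(h)}\psi(r)\leq\beta$ into $r_{\max}\leq\sqrt\delta$.

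Your route uses $\psi(\alpha)\geq\alpha^2/(2(1+\alpha))$ to obtain $\int_\Omega^{(h)}r^2\leq 2(1+r_{\max})\beta$, then reverse H\"older at $z^*$ plus a \emph{global} Cauchy--Schwarz to get $r_{\max}\leq M\|r\|_{L^2_h(\Omega)}$ with $M=C_{RH}(z^*)C_{\min}^{-1}H^{-d}|\Omega|^{1/2}$, and finally combines these into the self-referential quadratic $r_{\max}^2\leq 2M^2\beta(1+r_{\max})$. But the quadratic inversion does \emph{not} deliver $r_{\max}\leq\sqrt\delta$. Substituting the stated $\beta$ gives $2M^2\beta=2|\Omega|\delta(1+\sqrt{2|\Omega|})^{-2}$, which tends to $\delta$ as $|\Omega|\to\infty$; the quadratic then only yields $r_{\max}\leq M^2\beta+\sqrt{(M^2\beta)^2+2M^2\beta}$, which for $\delta$ near $1$ and $|\Omega|$ large approaches $(1+\sqrt 5)/2\approx 1.618$. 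So you cannot even conclude $r_{\max}<1$, hence Lemma \ref{l:properties}(6) is unavailable, let alone $r_{\max}\leq\sqrt\delta$. The loss is structural: Cauchy--Schwarz over $\Omega$ contributes $|\Omega|^{1/2}$, and the $(1+r_{\max})$ from the lower bound for $\psi$ compounds; the prefactor $(1+\sqrt{2|\Omega|})^{-2}$ is not calibrated for this route.

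The paper avoids the self-referential factor by staying with the $L^1$ norm of $r$. Jensen's inequality (the positive quadrature weights make $|\Omega|^{-1}\int_\Omega^{(h)}$ a probability measure) together with $\psi^{-1}(y)\leq y+\sqrt{2y}$ gives $\int_\Omega^{(h)}r\leq\beta+\sqrt{2|\Omega|\beta}$ directly, with no $r_{\max}$ on the right-hand side. One then decomposes $\int_\Omega^{(h)}r=\sum_{K\in T_H}\|r\|_{L^1_h(K)}$, applies reverse H\"older termwise, and uses nonnegativity to bound $\max_K|K|\,\|r\|_{L^\infty_h(K)}\leq C_{RH}(z^*)(\beta+\sqrt{2|\Omega|\beta})$. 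Since $\beta\leq1$ here, $\beta+\sqrt{2|\Omega|\beta}\leq(1+\sqrt{2|\Omega|})\sqrt\beta$, and with $|K|\geq C_{\min}H^d$ the stated $\beta$ yields $r_{\max}\leq\sqrt\delta$ exactly. To repair your proof, replace the $L^2$/Cauchy--Schwarz step by this $L^1$/Jensen argument.
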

\begin{proof}
Note that
\begin{align}
\partial_w \cf_h(z^*,t) & = \int_{\Omega}^{(h)} tc[w] + F'(Dz^*)[Dw] = 0.
\end{align}
Therefore,
\begin{align}
\cf_h(w,t)  - \cf_h(z^*,t) & = \int_{\Omega}^{(h)} tc[w-z^*] + F(Dw) - F(Dz^*) \\
& = \int_{\Omega}^{(h)} F(Dw) - F(Dz^*) - F'(Dz^*)[D(w-z^*)].
\end{align}
We apply \eqref{e:sc norm ineq 1} with $G = F$ and $y = Dw$ and $z = Dz^*$ to arrive at
\begin{align}
\cf_h(w,t)  - \cf_h(z^*,t) 
& \geq \int_{\Omega}^{(h)} \psi\left(
\sqrt{F''(Dz^*)[(Dw-Dz^*)^2]}
\right) \\
& \geq |\Omega| \psi\left(
|\Omega|^{-1} \int_{\Omega}^{(h)}
\sqrt{F''(Dz^*)[(Dw-Dz^*)^2]}
\right),
\end{align}
where we have used Jensen's inequality.
We continue by using the bound
$\psi^{-1}(\beta) \leq \beta + \sqrt{2\beta}$ to arrive at
\begin{align}
\beta + \sqrt{2|\Omega|\beta} & \geq 
\int_{\Omega}^{(h)}
\sqrt{F''(Dz^*)[(Dw-Dz^*)^2]} \\
& = \sum_{K \in T_H}\|\sqrt{F''(Dz^*)[(Dw-Dz^*)^2]}\|_{L^1_h(K)} \\
& \geq \sum_{K \in T_H}C_{RH}^{-1}(z^*)|K|\|\sqrt{F''(Dz^*)[(Dw-Dz^*)^2]}\|_{L^\infty_h(K)} \\
& \geq C_{RH}^{-1}(z^*)\max_{K \in T_H}|K|\|\sqrt{F''(Dz^*)[(Dw-Dz^*)^2]}\|_{L^\infty_h(K)}.
\end{align}
\begin{align}
\left\|\sqrt{F''(Dz^*)[(Dw-Dz^*)^2]}\right\|_{L^{\infty}_h(K)}
& \leq (1+\sqrt{2|\Omega|})C_{RH}|K|^{-1}\sqrt{\beta} =: r.
\end{align}
provided $\beta \leq 1$.
Then, from \eqref{e:Hessians with r}, if $r<1$, we find that on $K \in T_{\hH}$,
\begin{align}
(1-r)^2 F''(Dz^*) \preceq F''(Dw) \preceq (1-r)^{-2}F''(Dz^*).
\end{align}
In particular, for any polynomial $v$ such that $Dv$ is of degree $\alpha-1$, we have the following reverse Hölder inequality:
\begin{align}
\|\sqrt{F''(Dw)[(Dv)^2]}\|_{L^{\infty}_h(K)} 
& \leq (1-r)^{-1}\|\sqrt{F''(Dz^*)[(Dv)^2]}\|_{L^{\infty}_h(K)} \\
& \leq (1-r)^{-1}C_{RH}|K|^{-1}\|\sqrt{F''(Dz^*)[(Dv)^2]}\|_{L^{1}_h(K)} \\
& \leq C_{RH}(1-r)^{-2}|K|^{-1}\|\sqrt{F''(Dw)[(Dv)^2]}\|_{L^{1}_h(K)},
\end{align}
valid for any $K \in T_{\hH}$.
\end{proof}

\begin{definition}
Let $z^* \in \cQ$, $\beta \geq 0$ and $H\geq h \geq 0$ and $t>0$. Consider the affine space $A = z^*+V_H$ and assume that $z^*$ minimizes $z \to \cf_h(z,t)$ over $z \in A$. Define the ``Lebesgue set''
\begin{align}
\cL_{A,t}(\beta) & = \{
w \in A \; : \; \cf_h(w,t) - \cf_h(z^*,t) \leq \beta
\}.
\end{align}
\end{definition}

Theorem \ref{t:RH continuation} states that reverse Hölderness propagates from a single point $z^*$ to a ``neighborhood'' $\cL_{A,t}(\beta)$. In Section 6, we shall use this continuation procedure iteratively.

\section{Analysis of the naïve algorithm}

\begin{lemma}
Assume $(T_h,c,F)$ is regular.
\begin{align}
\cf_h(z_h^*(t_1),t) - \cf_h(z_h^*(t),t) & \leq \nu |\Omega| (\rho - \log\rho - 1) \text{ where } \rho = {t \over t_1} \geq 1. \label{e:naive gap 1} \\
\cf_h(z_h^*(t),t) - \cf_h(z_0^*(t),t) & \leq C\min\{th^\alpha,(th^\alpha)^2\}. \label{e:naive gap 2}
\end{align}
\end{lemma}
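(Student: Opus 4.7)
The plan for \eqref{e:naive gap 1} is an envelope-theorem argument. Set $g(t) := \int_{\Omega}^{(h)} c[z_h^{*}(t)]$. By the envelope theorem applied to $\cf_h(z_h^{*}(t),t) = \min_{V_h} \cf_h(\cdot,t)$, we have $\partial_t \cf_h(z_h^{*}(t),t) = g(t)$, while direct differentiation gives $\partial_t \cf_h(z_h^{*}(t_1),t) = g(t_1)$. Since $\cf_h(z_h^{*}(t_1),t) - \cf_h(z_h^{*}(t),t)$ vanishes at $t = t_1$, integrating in $t$ from $t_1$ to $t$ yields
\begin{equation*}
\cf_h(z_h^{*}(t_1),t) - \cf_h(z_h^{*}(t),t) = \int_{t_1}^{t}\bigl(g(t_1) - g(u)\bigr)\, du.
\end{equation*}
The crux is to show $|g'(t)| \leq \nu|\Omega|/t^{2}$. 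Differentiating the discrete weak form $t\int^{(h)} c[w] + \int^{(h)} F'(Dz_h^{*}(t))[Dw] = 0$ in $t$ and testing with $w = \dot z_h^{*}(t)$ gives $g'(t) = -\int^{(h)} F''(Dz_h^{*})[(D\dot z_h^{*})^{2}] \leq 0$; testing the undifferentiated weak form with the same $w$ gives $\int^{(h)} F'(Dz_h^{*})[D\dot z_h^{*}] = -t g'(t)$. Applying the pointwise self-concordant barrier inequality $(F'(z)[v])^{2} \leq \nu F''(z)[v^{2}]$ and Cauchy--Schwarz against the constant $1$ (valid since the quadrature weights are positive) then produces $t^{2} g'(t)^{2} \leq \nu|\Omega|(-g'(t))$, which is the desired bound. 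Thus $u \mapsto g(u) - \nu|\Omega|/u$ is non-decreasing, so $g(t_1) - g(u) \leq \nu|\Omega|(1/t_1 - 1/u)$ for $u \geq t_1$, and the elementary integral $\int_{t_1}^{t}(1/t_1 - 1/u)\,du = \rho - \log\rho - 1$ closes the argument.

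For \eqref{e:naive gap 2}, I would establish the two bounds $C t h^{\alpha}$ and $C(t h^{\alpha})^{2}$ separately; their minimum is the stated bound. For the linear bound, since $z_h^{*}(t)$ minimizes $\cf_h(\cdot,t)$ over $V_h$ and $\Pi_h z_0^{*}(t) \in V_h$, we have $\cf_h(z_h^{*},t) \leq \cf_h(\Pi_h z_0^{*},t)$; Lipschitz estimates using $\|\Pi_h z_0^{*} - z_0^{*}\|_{L^{\infty}} + \|D(\Pi_h z_0^{*} - z_0^{*})\|_{L^{\infty}} \leq Ch^{\alpha}$ (from regularity hypotheses (3)--(4) together with standard polynomial approximation) and the self-concordant bound $\|F'(Dz_h^{*})\|_{L^{\infty}} \leq Ct$ (from $|F'(z)[u]|^{2} \leq \nu F''(z)[u^{2}]$ together with Corollary~\ref{c:properties}) control $\cf_h(\Pi_h z_0^{*},t) - \cf_h(z_0^{*},t)$ by $Cth^{\alpha}$. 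For the quadratic bound, I would Taylor-expand $\cf_h(\cdot,t)$ around $z_h^{*}$; the first-order term reduces via the discrete weak form to $\cf_h'(z_h^{*})[z_0^{*} - \Pi_h z_0^{*}]$, which, after subtracting the (vanishing) continuous weak form at $z_0^{*}$, splits into a quadrature error on a smooth integrand plus $\int^{(h)}[F'(Dz_h^{*}) - F'(Dz_0^{*})][D(z_0^{*} - \Pi_h z_0^{*})]$; the latter is $O(t^{2}h^{2\alpha})$ via the mean-value estimate $|F'(Dz_h^{*}) - F'(Dz_0^{*})| \leq Ct^{2}h^{\alpha}$ (Corollary~\ref{c:properties} applied along the segment, combined with regularity~(4)). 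The Taylor remainder $\tfrac{1}{2}\int^{(h)} F''(D\xi)[(D(z_h^{*} - z_0^{*}))^{2}]$ is likewise bounded by $Ct^{2}h^{2\alpha} = C(th^{\alpha})^{2}$ by the same ingredients.

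The main obstacle will be bounding the quadrature-error piece of the linearized term in the quadratic regime, since a crude $L^{\infty}$ estimate of the integrand only gives $O(th^{\alpha})$; I expect to need the quadrature rule's exactness on polynomials of sufficiently high degree combined with the elementwise smoothness of $F'(Dz_0^{*})$ and $c$ inherited from regularity~(3), so that the quadrature error on the $O(h^{\alpha})$-small integrand with $O(h^{\alpha+1})$-small first factor ultimately collapses into the $t^{2}h^{2\alpha}$ budget. By contrast, \eqref{e:naive gap 1} is comparatively clean, being driven entirely by the envelope identity and the sharp self-concordant barrier bound.
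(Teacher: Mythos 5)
Your proof of \eqref{e:naive gap 1} is essentially the paper's proof in slightly different clothing. The paper sets $g(t) = \cf_h(z_h^*(t_1),t) - \cf_h(z_h^*(t),t)$, observes $g(t_1) = g'(t_1) = 0$, derives $0 \leq g''(t) \leq \nu|\Omega|/t^2$ via the same chain (envelope identity, $|F'[v]|\leq\sqrt{\nu F''[v^2]}$, Jensen), and integrates twice. Your $g(u) = \int^{(h)} c[z_h^*(u)]$ is precisely the paper's $-g''$ in disguise; your monotonicity argument for $u \mapsto g(u) - \nu|\Omega|/u$ followed by $\int_{t_1}^t (1/t_1-1/u)\,du = \rho-\log\rho-1$ is the same two integrations.

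For \eqref{e:naive gap 2} your route is genuinely different. The paper obtains the quadratic bound by applying the pointwise self-concordance estimate \eqref{e:sc norm ineq 3} directly to $F$ at $Dz_0^*$ versus $Dz_h^*$, integrating, and feeding in $\sigma(F''(Dz_0^*)) \subset [C_1,C_2t^2]$ together with $\|Dz_h^*-Dz_0^*\|_\infty \leq Ch^\alpha$ to get $\int^{(h)}\psi(-Cth^\alpha) \leq Ct^2h^{2\alpha}$. Crucially, the paper's first inequality silently drops the first-order term $\int^{(h)} tc[z_h^*-z_0^*] + F'(Dz_0^*)[D(z_h^*-z_0^*)]$, which vanishes only for exact integration and a test function in the continuous test space; the quadrature-error concern you flag at the end is precisely the gap the paper leaves implicit. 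For the linear bound the paper does not go through $\Pi_h z_0^*$ at all: it sets $g(t) = \cf_h(z_h^*(t),t) - \cf_h(z_0^*(t),t)$, differentiates via the envelope theorem to get $g'(t) = \int^{(h)} c[z_h^*(t)-z_0^*(t)]$ (again discarding two stationary terms, one of which has a quadrature error), bounds $|g'(t)| \leq Ch^\alpha$ from regularity, and integrates: $g(t) \leq g(t_1) + Ch^\alpha(t-t_1)$. Your argument via $\cf_h(z_h^*,t) \leq \cf_h(\Pi_h z_0^*,t)$ and a Lipschitz estimate for the linear bound, and a Taylor expansion around $z_h^*$ plus an explicit split of $\cf_h'(z_h^*)[\Pi_h z_0^*-z_0^*]$ into a quadrature-error term and a $[F'(Dz_h^*)-F'(Dz_0^*)]$-term for the quadratic bound, is more work but is also more honest about the quadrature issue. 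Both routes rely on the same regularity ingredients (Corollary~\ref{c:properties} and items 3--4 of Definition~\ref{d:regular}); the paper's version is shorter and the envelope-derivative trick is elegant, while yours localizes exactly where the quadrature exactness on $DV_h$ is needed.
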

\begin{proof}
From the regularity of $(T_h,c,F)$ and Lemmas \ref{l:basic self concordant estimates} and \ref{l:properties},
\begin{align}
\cf_h(z_h^*(t),t) - \cf_h(z_0^*(t),t) 
& \leq \int_{\Omega}^{(h)} \psi\left(-\sqrt{F''(z_0^*(t))[(Dz_h^*(t)-Dz_0^*(t))^2]}\right) \\
& \leq \int_{\Omega}^{(h)} \psi\left(-Ct\|Dz_h^*(t)-Dz_0^*(t)\|_2\right) \\
& \leq \int_{\Omega}^{(h)} \psi\left(-Cth^\alpha\right) \\
& \leq Ct^2h^{2\alpha},
\end{align}
valid for $0 \leq Cth^\alpha \leq 0.5$, and
where we have used that $\psi(\alpha) = O(\alpha^2)$ as $\alpha \to 0$ and $\psi$ is monotonically decreasing for $\alpha \leq 0$.

In the regime $Cth^\alpha > 0.5$, we use instead the following argument. Put $g(t) = \cf_h(z_h^*(t),t)-\cf_h(z_0^*(t),t)$. Then,
\begin{align}
g'(t) & = \int_{\Omega}^{(h)} c[z_h^*(t)-z_0^*(t)] \, dx \\
& + \overbrace{\int_{\Omega}^{(h)} tc[\partial_t z_h^*(t)] + F'(Dz_h^*(t))[D\partial_t z_h^*(t)] \, dx}^{0} \\
& - \overbrace{\int_{\Omega}^{(h)} tc[\partial_t z_0^*(t)] + F'(Dz_0^*(t))[D\partial_t z_0^*(t)] \, dx}^{0}.
\end{align}
The regularity of $(\Omega,c,F)$ then gives $|g'(t)| \leq Ch^\alpha$ and hence $g(t) \leq g(1) + (t-t_1)Ch^\alpha \leq tCh^\alpha + O(1)$, which proves \eqref{e:naive gap 2}.

Now set $g(t) = \cf_h(z^*_h(t_1),t) - \cf_h(z^*_h(t),t)$. Note that $g(t_1)=0$. Furthermore,
\begin{align}
g'(t) & = \int_{\Omega}^{(h)} c[z^*_h(t_1)-z^*_h(t)] \, dx \\
& - \overbrace{\int_{\Omega}^{(h)} tc[\partial_t z^*_h(t)] + F'(Dz^*_h(t))[D\partial_t z^*_h(t)] \, dx}^{0}.
\end{align}
We see that $g'(t_1) = 0$. Thus,
\begin{align}
g''(t) & = -\int_{\Omega}^{(h)} c[\partial_t z_{h}^*(t)] \\
& = {1 \over t}\int_{\Omega}^{(h)} F'(D z_{h}^*)[D\partial_tz_{h}^*] \\
& \leq {\sqrt{\nu} \over t} \int_{\Omega}^{(h)} \sqrt{F''[(D\partial_t z_{h}^*)^2]} \\
& \leq {\sqrt{\nu|\Omega|} \over t}\sqrt{
\int_{\Omega}^{(h)} F''[(D\partial_tz_{h}^*)^2]
} \\
& = {\sqrt{\nu|\Omega|} \over t}\sqrt{-\int_{\Omega}^{(h)} c[\partial_t z_{h}^*(t)}] \\
& = {\sqrt{\nu|\Omega|} \over t}\sqrt{g''(t)}
\end{align}
Thus,
\begin{align}
g''(t) & \leq {\nu |\Omega| \over t^2}.
\end{align}
The result follows by integrating twice.
\end{proof}

We now prove our first main theorem.

\begin{proof}[Proof of Theorem \ref{t:naive iteration count}]
In view of Lemma \ref{l:numerical barrier}, note that short $t$ steps on the fine grid satisfy $t_{k+1} = \rho t_k$ with $\rho-1 \sim {h^{0.5d}}$.

We begin with the analysis of the $h$-then-$t$ schedule.
The initial step of the algorithm is to start from an admissible $z^{(0)} \in V_{h^{(1)}} \cap \cQ$, the coarsest space, and find the center $z^*_{h^{(1)}}(t_0)$ by damped Newton iterations. This will require a certain number $N_0$ of damped Newton iterations, but this initial problem the same regardless of the choice of the finest grid level. In other words, $N_0$ is independent of the fine grid parameter $h$, so this initial step requires $O(1)$ damped Newton iterations.

According to Lemma \ref{l:numerical barrier} and \eqref{e:naive gap 2}, since $t_0 = O(1)$, for any grid level $h^{(\ell)} \geq h^{(L)} = h$, the function $Ch^{-d}\cf_h(w,t_0)$ is standard self-concordant on $V_{h^{(\ell)}} \cap \cQ$, and the suboptimality gap is $O(h^{\alpha-d}) = O(1)$ so each $h$ refinement converges in $O(1+\log\log\epsilon^{-1})$ damped Newton iterations.

Once on the fine grid, according to \eqref{e:naive gap 1}, the short $t$ step length is optimal, resulting in $\tilde{O}(h^{-0.5d})$ damped Newton iterations.

Now consider an arbitrary schedule of $t$ and $h$ refinements. We only consider the final grid refinement (i.e. from level $h^{(L-1)}$ to $h^{(L)} = h$), and the subsequent $t$ refinement on the fine grid $h$. Say that this occurs at iteration $j$, i.e. $h_j = h^{(L-1)}$ and $h_{j+1} = h^{(L)} = h$. We make two cases. First, if $t_j > h^{-\alpha}$, then the suboptimality gap of $Ch^{-d}\cf$ for the final $h$ refinement given by \eqref{e:naive gap 2} is at best $O(h^{-d})$. By the standard theory, short $t$ steps are theoretically optimal and converge in $\tilde{O}(h^{-0.5d})$ damped Newton iterations.

We now consider the case $t_j \leq h^{-\alpha}$. We count the $t$ refinements on the fine grid. Because short $t$ steps are optimal, and because the stopping criterion is $t \sim h^{-2\alpha}$, the theoretical estimate must be at least $\tilde{O}(h^{-0.5d})$ damped Newton iterations for these $t$ refinements.
\end{proof}

\section{Analysis of Algorithm MGB}

\begin{lemma} \label{l:mg h refinement}
Assume $(T_h,c,F)$ is regular.
There is a constant $\Chref$ such that
\begin{align}
\cf_h(z_{h,t_1,H}^*(t),t) - \cf_h(z_h^*(t),t) \leq \Chref^2 H^{2\alpha}\left({t \over t_1} - 1\right)^2, \label{e:mg h refinement}
\end{align}
provided that $C_{\href}(t/t_1-1) \leq 0.6838$.
\end{lemma}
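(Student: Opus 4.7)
The plan is to upper-bound the gap by evaluating $\cf_h(\cdot,t)$ at a carefully chosen trial element $\tilde w\in z_h^*(t_1)+V_H$ and invoking the minimality of $z_{h,t_1,H}^*(t)$ on that affine space. Since this affine space is contained in $V_h$ and $z_h^*(t)$ is the unconstrained minimizer of $\cf_h(\cdot,t)$ over $V_h$, the first-order variation vanishes on the admissible test $\tilde w-z_h^*(t)\in V_h$, and a pointwise application of Lemma~\ref{l:basic self concordant estimates} with $G=F$ gives
\[
\cf_h(\tilde w,t)-\cf_h(z_h^*(t),t)\le\int_\Omega^{(h)}\psi\bigl(-\|D(\tilde w-z_h^*(t))\|_{F''(Dz_h^*(t))}\bigr),
\]
provided the pointwise self-concordant norm stays below $1$. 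The number $0.6838$ in the hypothesis is precisely the largest $\alpha$ for which $\psi(-\alpha)\le\alpha^2$, so the condition $\Chref(\rho-1)\le 0.6838$ (with $\rho:=t/t_1$) will be arranged exactly to keep the pointwise norm in this regime and thereby upgrade the bound to $\int_\Omega^{(h)} F''(Dz_h^*(t))[(D(\tilde w-z_h^*(t)))^2]$.

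For the trial element I would take $\tilde w:=z_h^*(t_1)+\Pi_H\bigl(z_0^*(t)-z_0^*(t_1)\bigr)$, where $\Pi_H$ is nodal interpolation onto $V_H$. Then
\[
\tilde w-z_h^*(t)= -(I-\Pi_H)\bigl(z_0^*(t)-z_0^*(t_1)\bigr) + \bigl[(z_h^*(t_1)-z_0^*(t_1))-(z_h^*(t)-z_0^*(t))\bigr].
\]
For the first bracket, item~3 of Definition~\ref{d:regular} and the fundamental theorem of calculus in $\tau=t^{-1}$ give $\|D(z_0^*(t)-z_0^*(t_1))\|_{W^{\alpha,\infty}}\le C|t^{-1}-t_1^{-1}|= C(\rho-1)/t$, so standard finite element interpolation yields $\|D(I-\Pi_H)(z_0^*(t)-z_0^*(t_1))\|_\infty\le CH^\alpha(\rho-1)/t$. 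For the second bracket, integrating the companion estimate $\|D\partial_{t^{-1}}(z_0^*-z_h^*)\|_\infty\le Ch^\alpha$ (item~4) over the same $\tau$-interval yields $Ch^\alpha(\rho-1)/t\le CH^\alpha(\rho-1)/t$ (using $h\le H$).

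For the conversion to the $F''$-weighted integral, Corollary~\ref{c:properties} applied to $z_0^*$ and transferred to $z_h^*$ via item~4 of Definition~\ref{d:regular} gives $\sigma(F''(Dz_h^*(t)))\subset[C_1,C_2t^2]$, so $\int_\Omega^{(h)} F''[(Dv)^2]\le Ct^2|\Omega|\|Dv\|_\infty^2$. With $v=\tilde w-z_h^*(t)$ and $\|Dv\|_\infty\le CH^\alpha(\rho-1)/t$, the factor $t^2$ exactly cancels $(1/t)^2$, producing $\int F''[(D(\tilde w-z_h^*(t)))^2]\le \Chref^2 H^{2\alpha}(\rho-1)^2$ with $\Chref$ absorbing the constants. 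The hypothesis $\Chref(\rho-1)\le 0.6838$ then confirms pointwise $\|D(\tilde w-z_h^*(t))\|_{F''}\le\Chref H^\alpha(\rho-1)\le 0.6838$, validating the regime of Step~1; combined with $\psi(-\alpha)\le\alpha^2$, this yields the claimed bound.

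The step I expect to be the main obstacle is the cancellation in the second bracket. Each of $z_h^*(t)-z_0^*(t)$ and $z_h^*(t_1)-z_0^*(t_1)$ is individually only $O(h^\alpha)$ in $\|D\cdot\|_\infty$, which after the $F''$-weighting would inflate to a ruinous $Ct^2h^{2\alpha}$. The \emph{joint} regularity hypothesis---the companion estimate on $D\partial_{t^{-1}}(z_0^*-z_h^*)$---is indispensable, because it alone produces the crucial extra factor $(\rho-1)/t$ that cancels the $t^2$ coming from the spectrum of $F''$ and gives the clean $H^{2\alpha}(\rho-1)^2$ scaling needed for $\tilde O(1)$ Newton iterations later.
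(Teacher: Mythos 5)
Your proof is correct and takes essentially the same route as the paper: both use the trial element $z_h^*(t_1)+\Pi_H(z_0^*(t)-z_0^*(t_1))$, split the deviation $\tilde w - z_h^*(t)$ into an interpolation error and an $h$-approximation error that are each controlled by the fundamental theorem of calculus in $\tau=t^{-1}$ using items 3 and 4 of Definition~\ref{d:regular}, and then convert to the $F''$-weighted norm via the spectral bound $\sigma(F'')\subset[C_1,C_2t^2]$ so that the factor $t$ cancels and the $\psi(-\alpha)\le\alpha^2$ estimate applies under the stated smallness condition. Your reorganization (performing the algebraic splitting before applying FTC, rather than differentiating $g(t)=Dw_{h,t_1,H}(t)-Dz_h^*(t)$ first and then splitting the integrand) is a cosmetic rearrangement of the same argument, and your explicit remark about the need for the joint estimate on $D\partial_{t^{-1}}(z_0^*-z_h^*)$ correctly identifies the ingredient that makes the cancellation work.
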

\begin{proof}
We shall denote by $\Pi_h$ the interpolation operator for $V_h$.
Let
\begin{align}
w_{h,t_1,H}(t) & = z_h^*(t_1) + \Pi_H (z_0^*(t)-z_0^*(t_1)).
\end{align}
Put $g(t) = Dw_{h,t_1,H}(t) - Dz_h^*(t)$, and note that $g(t_1) = 0$. Then,
\begin{align}
\|Dw_{h,t_1,H}(t) - Dz_h^*(t)\|_{L^{\infty}_h} & = \|g(t)\|_{L^{\infty}_h} =
\|g(t) - g(t_1)\|_{L^{\infty}_h} \\
& = \left\|
\int_{t_1^{-1}}^{t^{-1}} \partial_{t^{-1}} g(\tau) \, d\tau
\right\|_{L^{\infty}_h} \\
& \leq \int_{t_1^{-1}}^{t^{-1}} \left\|
D\Pi_H \partial_{t^{-1}}z_0^*(\tau) - D\partial_{t^{-1}} z_h^*(\tau)
\right\|_{L^{\infty}_h} \, d\tau \\
& \leq \int_{t_1^{-1}}^{t^{-1}} \left\|
D\Pi_H \partial_{t^{-1}}z_0^*(\tau) - D\partial_{t^{-1}} z_0^*(\tau)
\right\|_{L^{\infty}_h} \\
& \qquad +\left\|
D\partial_{t^{-1}}z_0^*(\tau) - D\partial_{t^{-1}} z_h^*(\tau)
\right\|_{L^{\infty}_h} d\tau \\
& \leq CH^\alpha(t_1^{-1}-t^{-1}).
\end{align}
Thus,
\begin{align}
\cf_h(w_{h,t_1,H}(t)) - \cf_h(z_h^*(t)) &
\leq \int_{\Omega}^{(h)} \psi\left(-\sqrt{F''(Dz_h^*)[(Dw_{h,t_1,H}(t) - Dz_h^*(t))^2]}\right) \\
& \leq \int_{\Omega}^{(h)} \psi\left(-Ct\|Dw_{h,t_1,H}(t) - Dz_h^*(t)\|_2\right) \\
& \leq \int_{\Omega}^{(h)} \psi\left(-CH^\alpha(t/t_1-1)\right) \\
& \leq CH^{2\alpha}(t/t_1-1)^2,
\end{align}
where we have used that $\psi(\alpha) \leq \alpha^2$ when $-0.6838 \leq \alpha \leq 0$ and $\psi(\alpha)$ is monotonically decreasing for $\alpha \leq 0$.
\end{proof}

\begin{lemma}
Denote by $L = O(\log h)$ the number of grid levels, from the coarsest level $h^{(1)}$ to the fine grid level $h = h^{(L)}$. Assume that $\alpha \geq d$. We denote $z_h^* = z_h^*(t)$ (i.e. the ommited $(t)$ is implied), but $z_h^*(t_1)$ has its usual meaning.
For $t > 0$,
assume that $z_h^* = z_h^*(t)$ satisfies the following reverse Hölder inequality:
\begin{align}
\|\sqrt{F''(Dz_h^*)[(Dv)^2]}\|_{L_h^\infty(K)} 
& \leq C_{RH}(z_h^*) |K|^{-1}
\|\sqrt{F''(Dz_h^*)[(Dv)^2]}\|_{L_h^1(K)},
\end{align}
for all $1 \geq H \geq h$, $K \in T_H$ and polynomial $v$ such that $Dv$ is of degree $\alpha - 1$.
Define 
\begin{align}
\tilde{\beta} & = 4^{-d}e^{-4}(1+\sqrt{2|\Omega|})^{-2}C_{\min}^2 \\
\beta & = (L+1)^{-2} {C}_{RH}^{-2}(z_h^*(t)) \tilde{\beta}.
\end{align}
Denote $\rho = t/t_1$ and assume that
\begin{align}
\rho & < 1 + \Chref^{-1}{\sqrt{{\beta}}
}.
\end{align}
For $\ell = 1,\ldots,L$, put $A_{\ell} = z_h^*(t_1) + V_{h^{(\ell)}}$. Then,
\begin{align}
\cf_h(z_{h,t_1,h^{(\ell)}}^*) - \cf_h(z_{h}^*) 
& \leq 
(0.5 h^{(\ell)})^{2d} {\beta}
\text{ and}\\
C_{RH}(A_{\ell},t,(h^{(\ell)})^{2d}{\beta})
& \leq e^2C_{RH}(z_h^*).
\end{align}
for all $K \in T_H$ and $1 \geq H \geq h$ and polynomial $v$ such that the degree of $Dv$ is $\alpha - 1$.

Furthermore, if
\begin{align}
w \in \cL_{A_\ell,t}((h^{(\ell)})^{2d}{\beta}),
\end{align}
then, for an arbitrary test function $\phi \in V_H$,
\begin{align}
\left|\cF'''_h(Dw)[(D\phi)^3] \right|
& \leq 2e^2C_{RH}(z_h^*)C_{\min}^{-0.5}H^{-0.5d}(\cF(Dw)[(D\phi)^2])^{1.5}.
\end{align}
In particular, the function $w \to e^4 C_{RH}^2(z_h^*)C_{\min}^{-1}(h^{(\ell)})^{-d}\cf_h(w,t)$ is standard self-concordant on $\cL_{A_{\ell},t}((h^{(\ell)})^{2d}\beta)$ with suboptimality gap bounded by 
\begin{align}
4^{-d}(h^{(\ell)})^{(d)}C_{\min}(1+\sqrt{2|\Omega|})^{-2}(L+1)^{-2}.
\end{align}
The damped Newton method on $\cL_{A_{\ell},t}((h^{(\ell)})^{2d}\beta)$ converges in
\begin{align}
O(1)+\log\log\epsilon^{-1} \text{ iterations.} \label{e:iterations argh}
\end{align}
\end{lemma}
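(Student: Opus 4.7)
The plan is to derive items (1) and (2) as consequences of Lemma~\ref{l:mg h refinement} and Theorem~\ref{t:RH continuation} respectively; statement (3) then follows from (2) combined with the pointwise self-concordance of $F$; (4) is a standard rescaling of (3); and the Newton count \eqref{e:iterations argh} follows from Lemma~\ref{l:boyd}.

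For (1), I would apply Lemma~\ref{l:mg h refinement} with $H = h^{(\ell)}$, using the step-size hypothesis $\rho - 1 < \Chref^{-1}\sqrt{\beta}$, to obtain $\cf_h(z^*_{h,t_1,h^{(\ell)}}) - \cf_h(z_h^*) \leq (h^{(\ell)})^{2\alpha}\beta$. The conditions $\alpha \geq d$ and $h^{(\ell)}\leq 1$ reduce this to $(h^{(\ell)})^{2d}\beta$, and the $4^{-d}$ factor built into $\tilde\beta$ supplies the extra multiplicative slack needed to reach the $(0.5 h^{(\ell)})^{2d}\beta$ target. For (2), I would invoke Theorem~\ref{t:RH continuation} with base point $z^* = z_h^*(t)$ (where RH is assumed) and pick $\delta$ so that $(1-\sqrt{\delta})^{-2} \leq e^2$, for instance $\sqrt{\delta} = 1 - e^{-1}$. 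The $(L+1)^{-2}$ factor inside $\beta$ is calibrated so that any $w \in \cL_{A_\ell,t}((h^{(\ell)})^{2d}\beta)$ satisfies the threshold \eqref{e:beta} of Theorem~\ref{t:RH continuation}, giving RH at $w$ with constant at most $e^2 C_{RH}(z_h^*)$, uniformly in $\ell$.

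For (3), I would start from $|F'''(Dw)[(D\phi)^3]| \leq 2(F''(Dw)[(D\phi)^2])^{1.5}$ pointwise, integrate under the quadrature, and factor the integrand as $F''(Dw)[(D\phi)^2]\cdot\sqrt{F''(Dw)[(D\phi)^2]}$. Bounding the second factor in $L^\infty_h(K)$ via (2), then passing from $L^1_h(K)$ to $L^2_h(K)$ by Cauchy--Schwarz gains a factor $|K|^{-1/2} \leq C_{\min}^{-1/2}H^{-d/2}$ from \eqref{e:quadrature weights}, producing the stated bound on $\cF_h'''$. Rescaling by $\lambda = e^4 C_{RH}^2(z_h^*)C_{\min}^{-1}(h^{(\ell)})^{-d}$ yields (4); the suboptimality gap of $\lambda\cf_h$ is $\lambda$ times (1), matching the claimed bound. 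Lemma~\ref{l:boyd} then produces \eqref{e:iterations argh} because the rescaled gap is at most a small constant (vanishing as $h \to 0$), so only the $\log\log\epsilon^{-1}$ term remains.

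The hard part will be the constant bookkeeping in the middle step: verifying that the five numerical factors in $\tilde\beta$ and $\beta$, namely $4^{-d}$, $e^{-4}$, $(1+\sqrt{2|\Omega|})^{-2}$, $C_{\min}^2$, and $(L+1)^{-2}$, are calibrated so that \eqref{e:beta} is met on every $\cL_{A_\ell,t}((h^{(\ell)})^{2d}\beta)$ with uniform $e^2$-bounded RH growth across all $L$ levels. A secondary delicate point is that Theorem~\ref{t:RH continuation} is phrased around a point $z^* = z^*_A(t)$ that minimises $\cf_h$ over an affine space $A = z^* + V_H$, whereas our spaces $A_\ell = z_h^*(t_1) + V_{h^{(\ell)}}$ are anchored at $z_h^*(t_1)$ and the only \emph{a priori} RH base point is $z_h^*(t)$; choosing the right $H$ and decomposing $w - z_h^*(t) = (w - z_h^*(t_1)) + (z_h^*(t_1) - z_h^*(t))$ so that the RH continuation can be chained, is the technical crux.
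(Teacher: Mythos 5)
Your sketch of items (1), (3), (4) and the Newton count matches the paper, but item (2) as you describe it does not work, and you have (correctly) flagged the difficulty without actually resolving it.

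The problem with your single-shot plan is twofold. First, Theorem~\ref{t:RH continuation} requires the base point $z^*$ to be the \emph{minimizer} of $\cf_h(\cdot,t)$ over the affine space $A = z^* + V_H$, and the output constant $C_{RH}(A,t,\beta)$ is attached to that pair $(A,z^*)$. With $z^* = z_h^*(t)$ the only admissible $A$ is $A_L = z_h^*(t_1) + V_h$, not $A_\ell$ for $\ell < L$; the minimizer over $A_\ell$ is $z_{h,t_1,h^{(\ell)}}^*$, at which no reverse H\"older bound is yet known. Second, even if one tried to route everything through $A_L$, the threshold \eqref{e:beta} scales like $H^{2d}$ with $H$ the mesh of the affine space, so with $A = A_L$ you only control the sublevel set of radius $\sim h^{2d}$, whereas you need to cover sets of radius $\sim (h^{(\ell)})^{2d} \gg h^{2d}$ for $\ell < L$. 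Your choice $\sqrt{\delta} = 1 - e^{-1}$ is calibrated for a single application, but the paper's $\delta = (L+1)^{-2}$ is deliberately small: it is chosen so that the factor $(1-\sqrt{\delta})^{-2} = (1 - (L+1)^{-1})^{-2}$ compounds to at most $e^2$ after $L$ successive applications.

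What the paper actually does is a \emph{backward} induction on $\ell$, from $\ell = L$ down to $1$, establishing $C_{RH}(z_{h,t_1,h^{(\ell)}}^*) \leq (1 - (L+1)^{-1})^{-2(L-\ell)} C_{RH}(z_h^*) \leq e^2 C_{RH}(z_h^*)$. The base case $\ell = L$ is trivial since $z_{h,t_1,h^{(L)}}^* = z_h^*$. In the inductive step one invokes Theorem~\ref{t:RH continuation} with $A = A_{\ell+1}$ and base point $z^* = z_{h,t_1,h^{(\ell+1)}}^*$ (which is the minimizer and is RH by induction), using that $z_{h,t_1,h^{(\ell)}}^* \in A_\ell \subset A_{\ell+1}$ and the monotonicity $\cf_h(z_{h,t_1,h^{(\ell)}}^*) - \cf_h(z_{h,t_1,h^{(\ell+1)}}^*) \leq \cf_h(z_{h,t_1,h^{(\ell)}}^*) - \cf_h(z_h^*)$, controlled by Lemma~\ref{l:mg h refinement}. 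The factors $4^{-d}$ and $e^{-4}$ in $\tilde\beta$ are not ``multiplicative slack'': $4^{-d}$ pays for the shift from $h^{(\ell)}$ to $h^{(\ell+1)} = 0.5 h^{(\ell)}$ in the threshold, and $e^{-4}$ absorbs the $e^2$ inflation of the RH constant at the base point, so that the threshold \eqref{e:beta} is met at every level with $\delta = (L+1)^{-2}$. The decomposition $w - z_h^*(t) = (w - z_h^*(t_1)) + (z_h^*(t_1) - z_h^*(t))$ you suggest for the ``crux'' is not used and does not substitute for this level-by-level bootstrap.
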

\begin{proof}
For $\ell = 1,\ldots,L$, we begin by proving a reverse Hölder inequality of the form
\begin{align}
 \|\sqrt{F''(Dz_{h,t_1,h^{(\ell)}}^*)[(Dv)^2]}\|_{L^\infty_h(K)} 
& \leq C
|K|^{-1}
\|\sqrt{F''(Dz_{h,t_1,h^{(\ell)}}^*)[(Dv)^2]}\|_{L^1_h(K)}, \label{e:RH yet again}
\end{align}
for all $1 \geq H \geq h > 0$ and for all $K \in T_{H}$.
We shall denote by $C_{RH}(z_{h,t_1,h^{(\ell)}}^*)$ the smallest constant $C$ such that \eqref{e:RH yet again} holds.
We do a proof by induction ``backwards'', starting from $\ell = L$, that the following inequality holds
\begin{align}
{C_{RH}(z_{h,t_1,h^{(\ell)}}^*)} & \leq \left(1 - {1 \over L+1}\right)^{-2(L-\ell)}C_{RH}(z_h^*). \label{e:C_RH induction}
\end{align}
For $\ell = L$, since $z_{h,t_1,h^{(L)}}^* = z_{h,t_1,h}^* = z_h^*$, the induction hypothesis is tautological. We now prove by induction the cases $\ell = L-1,\ldots,1$. We find that
\begin{align}
& \cf_h(z_{h,t_1,h^{(\ell)}}^*) - \cf_h(z_{h,t_1,0.5h^{(\ell)}}^*) \\
& \leq 
\cf_h(z_{h,t_1,h^{(\ell)}}^*) - \cf_h(z_{h}^*) \\
& \stackrel{\eqref{e:mg h refinement}}{\leq} \Chref^2 (h^{(\ell)})^{2d}(\rho-1)^2 \\
& \leq (h^{(\ell)})^{2d}(L+1)^{-2}{C}_{RH}^{-2}(z_h^*(t))\tilde{\beta} \\
& = (h^{(\ell)})^{2d}(L+1)^{-2} 4^{-d}e^{-4}(1+\sqrt{2|\Omega|})^{-2}{C}_{RH}^{-2}(z_h^*)C_{\min}^2.
\end{align}
Note that $\left(1 - {1 \over L+1}\right)^{-2(L-\ell)} \leq \left(1 - {1 \over L+1}\right)^{-2L} \leq e^2$, so from \eqref{e:C_RH induction} with $\ell$ replaced by $\ell+1$ (i.e. the induction hypothesis), we find that $C_{RH}(z_{h,t_1,h^{(\ell+1)}}^*) \leq e^2C_{RH}(z_h^*)$, so that
\begin{align}
\cf_h(z_{h,t_1,h^{(\ell)}}^*) - \cf_h(z_{h,t_1,0.5h^{(\ell)}}^*) 
& \leq \beta_{\ell+1} \text{ where}\\
\beta_{\ell} & = {C_{\min}^2 (h^{(\ell)})^{2d} \over 
(L+1)^2(1+\sqrt{2|\Omega|})^2C_{RH}^2(z_{h,t_1,h^{(\ell)}}^*)
}.
\end{align}
We put $A = A_{\ell+1} = z^*_h(t_1) + V_{0.5h^{(\ell)}}$ and $\beta = \beta_{\ell+1}$ to find that \eqref{e:beta} holds with $\delta = (L+1)^{-2} < 1$. Thus,
\begin{align}
C_{RH}(A_{\ell+1},t,\beta_{\ell+1})
& \stackrel{\eqref{e:C_RH(w)}}{\leq} 
(1-\sqrt{\delta})^{-2}C_{RH}(z_{h,t_1,0.5h^{(\ell)}}^*) \\
& \stackrel{\eqref{e:C_RH induction}}{\leq}
\left(
1 - {1 \over L+1}
\right)^{-2}
\left(
1 - {1 \over L+1}
\right)^{-2(L-\ell-1)}C_{RH}(z_h^*) \\
& = \left(
1 - {1 \over L+1}
\right)^{-2(L-\ell)}C_{RH}(z_h^*).
\end{align}
Then, from $z_{h,t_1,h^{\ell}}^* \in \cL_{A_{\ell+1},t}(\beta_{\ell+1})$, we have that $C_{RH}(z_{h,t_1,h^{\ell}}^*) \leq C_{RH}(A_{\ell+1},t,\beta_{\ell+1})$, and there follows \eqref{e:C_RH induction}. This completes the induction proof of \eqref{e:C_RH induction} for $\ell = 1,\ldots,L$.

We now prove the self-concordance of $\cF_h$. Let $\phi \in V_H$ be an arbitrary test function, and write $\phi = \sum_{K \in T_H} \mathbbm{1}_K v_K$, where each $v_K$ is a polynomial such that the degree of $Dv_k$ is $\alpha-1$.
\begin{align}
& \left|\cF'''_h(Dw)[(D\phi)^3]\right| \\
& \leq \int_{\Omega}^{(h)} 2|F''(Dw)[(D\phi)^2]|^{1.5} \\
& \leq 2\|F''(Dw)[(D\phi)^2]\|_{L^1_h(\Omega)}
\|\sqrt{F''(Dw)[(D\phi)^2]}\|_{L^\infty_h(\Omega)} \\
& = 2\|F''(Dw)[(D\phi)^2]\|_{L^1_h(\Omega)}
\max_{K \in T_H}
\|\sqrt{F''(Dw)[(Dv_K)^2]}\|_{L^\infty_h(K)} \\
& \leq 2e^2C_{RH}(z_h^*)\|F''(Dw)[(D\phi)^2]\|_{L^1_h(\Omega)}
\max_{K \in T_H}|K|^{-1}\|\sqrt{F''(Dw)[(Dv_K)^2]}\|_{L^1_h(K)} \\
& \leq 2e^2C_{RH}(z_h^*)\|F''(Dw)[(D\phi)^2]\|_{L^1_h(\Omega)}
\max_{K \in T_H}|K|^{-0.5}\sqrt{\|F''(Dw)[(Dv_K)^2]\|_{L^1_h(K)}} \\
& \leq 2e^2C_{RH}(z_h^*)C_{\min}^{-0.5}H^{-0.5d}\|F''(Dw)[(D\phi)^2]\|_{L^1_h(\Omega)}^{1.5}.
\end{align}
\end{proof}

\begin{lemma} \label{l:mg t refinement}
\begin{align}
\cf_h(z_{h}^*(t_1),t) - \cf_h(z_{h,t_1,H}^*(t),t)
& \leq \nu|\Omega|(\rho-\log \rho - 1) \text{ where } \rho = {t \over t_1} \geq 1. \label{e:mg t refinement}
\end{align}
\end{lemma}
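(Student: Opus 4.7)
The plan is to mimic the proof of \eqref{e:naive gap 1}, with the shifted central path $z^*_{h,t_1,H}(t)$ playing the role of $z^*_h(t)$. The key structural fact enabling this is that the curve $t \mapsto z^*_{h,t_1,H}(t)$ lives in the affine space $z^*_h(t_1) + V_H$ whose base point is $t$-independent, so its derivative $\partial_t z^*_{h,t_1,H}(t)$ lies in the linear space $V_H$ and is therefore an admissible test function in the shifted weak form \eqref{e:discrete weak form 2}.

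Set $g(t) = \cf_h(z^*_h(t_1),t) - \cf_h(z^*_{h,t_1,H}(t),t)$. Since $z^*_h(t_1)$ minimizes $\cf_h(\cdot,t_1)$ over the ambient space $V_h$, which contains the affine space $z^*_h(t_1) + V_H$, it also minimizes $\cf_h(\cdot,t_1)$ over the latter; by uniqueness, $z^*_{h,t_1,H}(t_1) = z^*_h(t_1)$, hence $g(t_1) = 0$. Differentiating the first term of $g$ in $t$ gives $\int_\Omega^{(h)} c[z^*_h(t_1)]$; differentiating the second and invoking \eqref{e:discrete weak form 2} with test function $\phi_H = \partial_t z^*_{h,t_1,H}(t)$ kills the chain-rule contribution and leaves $\int_\Omega^{(h)} c[z^*_{h,t_1,H}(t)]$. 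Thus $g'(t) = \int_\Omega^{(h)} c[z^*_h(t_1) - z^*_{h,t_1,H}(t)]$, which vanishes at $t = t_1$.

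For the second derivative, $g''(t) = -\int_\Omega^{(h)} c[\partial_t z^*_{h,t_1,H}(t)]$. Using \eqref{e:discrete weak form 2} once more with the same test function $\phi_H = \partial_t z^*_{h,t_1,H}(t) \in V_H$, this equals $(1/t)\int_\Omega^{(h)} F'(Dz^*_{h,t_1,H}(t))[D\partial_t z^*_{h,t_1,H}(t)]$. From here the calculation is identical to the one carried out for \eqref{e:naive gap 1}: apply the pointwise self-concordant barrier bound $|F'(z)[u]| \leq \sqrt{\nu F''(z)[u^2]}$, then Jensen's inequality $\int_\Omega^{(h)} \sqrt{X} \leq \sqrt{|\Omega|\int_\Omega^{(h)} X}$, and finally note that $\int_\Omega^{(h)} F''(Dz^*_{h,t_1,H})[(D\partial_t z^*_{h,t_1,H})^2] = g''(t)$ by the same weak-form manipulation. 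This yields the self-bounding inequality $g''(t) \leq (\sqrt{\nu|\Omega|}/t)\sqrt{g''(t)}$, and hence $g''(t) \leq \nu|\Omega|/t^2$.

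Integrating twice from $t_1$ to $t$ against $g(t_1) = g'(t_1) = 0$ and substituting $\rho = t/t_1$ produces the claimed bound $g(t) \leq \nu|\Omega|(\rho - \log\rho - 1)$. The only point that needed to be checked beyond what appeared in \eqref{e:naive gap 1} is that $\partial_t z^*_{h,t_1,H}(t)\in V_H$ so that the shifted weak form \eqref{e:discrete weak form 2} applies with this choice; this was the content of the first paragraph and is what makes the rest of the argument go through verbatim, so I do not expect any genuine obstacle.
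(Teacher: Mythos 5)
Your proof is correct and follows essentially the same route as the paper: differentiate $g$ twice, use $\partial_t z^*_{h,t_1,H}(t) \in V_H$ to kill the variational term via \eqref{e:discrete weak form 2} and its $t$-derivative, derive the self-bounding inequality $g''(t) \le (\sqrt{\nu|\Omega|}/t)\sqrt{g''(t)}$, and integrate twice using $g(t_1)=g'(t_1)=0$. The one point you spell out that the paper leaves implicit — that $z^*_{h,t_1,H}(t_1) = z^*_h(t_1)$ by uniqueness, since $z^*_h(t_1)+V_H \subset V_h$ contains the global minimizer — is a correct and useful observation, not a deviation in approach.
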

\begin{proof}
Let $g(t) = \cf_h(z_{h}^*(t_1),t) - \cf_h(z_{h,t_1,H}^*(t),t)$.
\begin{align}
g'(t) & = \int_{\Omega}^{(h)} c[z_h^*(t_1)-z_{h,t_1,H}^*(t)] \, dx \\
& - \overbrace{\left(
\int_\Omega^{(h)} tc[\partial_t z_{h,t_1,H}^*(t) + F'(Dz_{h,t_1,H}^*(t))[D\partial_t z_{h,t_1,H}^*(t)]
\, dx
\right)}^{0},
\end{align}
where we have used that $\partial_t z_{h,t_1,H}^*(t) \in V_H$, the tangent space of $A = z_h^*(t_1) + V_H$. We further see that $g'(t_1) = 0$. Thus,
\begin{align}
|g''(t)| & = \left|\int_\Omega^{(h)} c[\partial_t z_{h,t_1,H}^*(t)] \right|\\
& = {1 \over t}\left|\int_{\Omega}^{(h)}
F'(Dz_{h,t_1,H}^*(t))[D\partial_tz_{h,t_1,H}^*(t)]
\right| \\
& \leq {1 \over t} \int_\Omega^{(h)} \sqrt{\nu F''(Dz_{h,t_1,H}^*(t))[(D\partial_tz_{h,t_1,H}^*(t))^2]} \\
& \leq {1 \over t} \sqrt{\nu|\Omega|\int_\Omega^{(h)} F''(Dz_{h,t_1,H}^*(t))[(D\partial_tz_{h,t_1,H}^*(t))^2]} \\
& = {1 \over t} \sqrt{-\nu|\Omega|\int_\Omega^{(h)} c[\partial_tz_{h,t_1,H}^*(t)]} \\
& = {1 \over t} \sqrt{\nu|\Omega|\,|g''(t)|}.
\end{align}
As a result,
\begin{align}
|g''(t)| & \leq {\nu|\Omega| \over t^2}.
\end{align}
\end{proof}

\begin{lemma}
Define
\begin{align}
\Cmg & = \min \left\{
{\sqrt{\tilde{\beta}} \over \Chref}, \; 
{
\sqrt{2\tilde{\beta}}\,(h^{(1)})^d
\over
\sqrt{\nu|\Omega|}
}
\right\}. \label{e:Cmg}
\end{align}
Assume
\begin{align}
\rho & \leq 1 + 
{\Cmg 
\over
(L+1)C_{RH}(z_h^*(t))
}. \label{e:rho mg}
\end{align}
Put $t = \rho t_1$.
Then, Algorithm MGB compute $z_h^*(t)$ from $z_h^*(t_1)$ in 
\begin{align}
L\left(O(1) + \log \log \epsilon^{-1}\right) \text{ Newton iterations.} \label{e:neverending iterations}
\end{align}
\end{lemma}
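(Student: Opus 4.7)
The plan is to interpret Algorithm MGB as a sequence of $L$ damped Newton minimizations, one per grid level $\ell = 1,\ldots,L$, and invoke the preceding lemma at each stage to obtain an $O(1)+\log\log\epsilon^{-1}$ iteration count per stage; summing will then yield \eqref{e:neverending iterations}. The step size condition \eqref{e:rho mg} together with the definition \eqref{e:Cmg} of $\Cmg$ is engineered precisely to place each stage's initial guess in the Lebesgue set $\cL_{A_\ell, t}((h^{(\ell)})^{2d}\beta)$, where $A_\ell = z_h^*(t_1) + V_{h^{(\ell)}}$ and $\beta$ is as in the preceding lemma. The first term of \eqref{e:Cmg} encodes the hypothesis $\Chref(\rho-1) \leq \sqrt{\beta}$ used in the preceding lemma, while the second term encodes the $t$-refinement bound of Lemma \ref{l:mg t refinement}.

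First I would handle stage $\ell = 1$, whose initial guess is $z_h^*(t_1) \in A_1$. By Lemma \ref{l:mg t refinement} together with $\rho - \log\rho - 1 \leq \tfrac{1}{2}(\rho-1)^2$,
\begin{align}
\cf_h(z_h^*(t_1), t) - \cf_h(z_{h,t_1,h^{(1)}}^*(t), t) \leq \tfrac{1}{2}\nu|\Omega|(\rho - 1)^2.
\end{align}
Substituting the second entry of the minimum \eqref{e:Cmg} into \eqref{e:rho mg}, a direct calculation shows this upper bound is at most $(h^{(1)})^{2d}\beta$, so $z_h^*(t_1) \in \cL_{A_1, t}((h^{(1)})^{2d}\beta)$, as required.

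For stages $\ell \geq 2$, the initial guess is (an approximation of) $z_{h,t_1,h^{(\ell-1)}}^*(t)$. Since $V_{h^{(\ell-1)}} \subset V_{h^{(\ell)}}$, this point lies in $A_\ell$. The preceding lemma already yields $\cf_h(z_{h,t_1,h^{(\ell-1)}}^*) - \cf_h(z_h^*) \leq (0.5\,h^{(\ell-1)})^{2d}\beta = (h^{(\ell)})^{2d}\beta$, and since $\cf_h(z_{h,t_1,h^{(\ell)}}^*) \geq \cf_h(z_h^*)$ (the latter being minimized over the larger ambient space $V_h$), the suboptimality gap for stage $\ell$ is bounded by $(h^{(\ell)})^{2d}\beta$, so the initial guess again lies in the required Lebesgue set.

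Having located the initial guess in the correct Lebesgue set at every stage, I would invoke \eqref{e:iterations argh} of the preceding lemma: within $\cL_{A_\ell, t}((h^{(\ell)})^{2d}\beta)$ the rescaled objective is standard self-concordant, so damped Newton reaches tolerance $\epsilon$ in $O(1)+\log\log\epsilon^{-1}$ iterations. Summing over $\ell = 1,\ldots,L$ gives \eqref{e:neverending iterations}. The main obstacle is that the initial guess at stage $\ell \geq 2$ is only an approximation of $z_{h,t_1,h^{(\ell-1)}}^*(t)$ rather than the exact minimizer, so I would need the previous stage to be run to a tolerance tight enough that its residual error does not exhaust the margin $(h^{(\ell)})^{2d}\beta$ of the next stage; this tightening is absorbed into the $\log\log\epsilon^{-1}$ factor of the iteration count and does not change the overall bound.
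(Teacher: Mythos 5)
Your proposal is correct and follows essentially the same argument as the paper: show that the starting point of each grid-level minimization lies in the appropriate Lebesgue set $\cL_{A_\ell,t}((h^{(\ell)})^{2d}\beta)$, invoke the preceding lemma's iteration bound \eqref{e:iterations argh} at each level, and sum over $\ell=1,\ldots,L$. Your remark about the iterates being only approximate (and absorbing the consequent tolerance tightening into the $\log\log\epsilon^{-1}$ term) is in fact a small refinement over the paper's proof, which implicitly treats each stage's output as exact.
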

\begin{proof}
Denote $A_\ell = z_h^*(t_1) + V_{h^{(\ell)}}$. From \eqref{e:mg t refinement}, \eqref{e:Cmg}, \eqref{e:rho mg} and from $\rho - \log\rho - 1 \leq 0.5(\rho-1)^2$,
\begin{align}
z_h^*(t_1) \in \cL_{A_1,t}(\tilde{\beta}(h^{(1)})^{2d}(L+1)^{-2}C_{RH}^{-2}(z_h^*(t))) = \cL_{A_1,t}(\beta(h^{(1)})^{2d}).
\end{align}
According to \eqref{e:iterations argh}, the damped Newton methos starting at $z_h^*(t_1)$ will locate $z_{h,t_1,h^{(1)}}^*(t)$ in
\begin{align}
O(1) + \log \log \epsilon^{-1} \text{ iterations}. \label{e:aaaargh}
\end{align}
Furthermore, for each $\ell = 1,\ldots,L-1$, we find that
\begin{align}
z_{h,t_1,h^{(\ell)}}^*(t) \in \cL_{A_{\ell+1},t}((h^{(\ell+1)})^{2d}\beta).
\end{align}
According to \eqref{e:iterations argh}, the damped Newton methos starting at $z_{h,t_1,h^{(\ell)}}^*(t)$ will locate $z_{h,t_1,h^{(\ell+1)}}^*(t)$ in \eqref{e:aaaargh} iterations. Thus, the total number of iterations is obtained by multiplying \eqref{e:aaaargh} by $L$.
\end{proof}

We are now ready to prove our second main theorem.

\begin{proof}[Proof of Theorem \ref{t:Algorithm MGB}]
The initial step of Algorithm MGB begins with an admissible $z^{(0)} \in V_{h^{(1)}} \cap \cQ$, finds $z^*_{h^{(1)}}(t_0)$ by damped Newton steps, and from there performs $h$ refinements to compute $z^*_h(t_0)$. This procedure is identical to the initial phase of the $h$-then-$t$ schedule of the naïve algorithm. The proof of Theorem \ref{t:naive iteration count} shows that this initial phase requires $\tilde{O}(1)$ damped Newton steps.

The number of damped Newton iterations to compute $z_h^*(t_{k+1})$ from $z_h^*(t_k)$ is given by \eqref{e:neverending iterations}. It thus suffices to count the number of $t$-steps. The $t$ step size is limited by \eqref{e:rho mg}. Since we start with $t_0 = O(1)$ and end when $t_k \sim h^{2\alpha}$, the total number of $t$ steps at most
\begin{align}
O\left(
{(\log(h^{2\alpha}/t_0))
(1-\log_2(h))\sup_{t \in [t_0,h^{2\alpha}]} C_{RH}(z_h^*(t))
}
\right).
\end{align}
This whole expression is bounded by a polylogarithm of $h$.
\end{proof}

\section{Reverse inequalities}

In the present section, we show that many functions satisfy reverse Hölder and Sobolev inequalities. Our goal is to show that the reverse Hölder inequality of Definition \ref{d:regular} is satisfied if the solution $z^*_0$ and the barrier $F$ satisfy some smoothness conditions.

\begin{lemma}
Let $D(x,R) \subset \mathbb{C}$ be a disc centered at $x \in \mathbb{C}$ of radius $R$, and let $r<R$. Then, for any bounded analytic function $f(x)$ on $D(x,R)$,
\begin{align}
\|f'\|_{L^{\infty}(D(x,r))} & \leq {R \over (R-r)^2} \|f\|_{L^{\infty}(D(x,R))}. \label{e:Cauchy}
\end{align}
\end{lemma}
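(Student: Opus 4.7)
The plan is to derive this from the Cauchy integral formula for the derivative, which is the standard route to Cauchy-type $L^{\infty}$ estimates on analytic functions in the disc.

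First, I would fix any $y \in D(x,r)$ and any intermediate radius $R'$ with $r < R' < R$, and apply the Cauchy formula for the first derivative on the circle $C_{R'} = \{\zeta \in \mathbb{C} \; : \; |\zeta - x| = R'\}$, namely
\begin{align}
f'(y) & = {1 \over 2\pi i} \oint_{C_{R'}} {f(\zeta) \over (\zeta - y)^2}\, d\zeta.
\end{align}
For $\zeta$ on $C_{R'}$ I have the elementary lower bound $|\zeta - y| \geq |\zeta - x| - |y - x| \geq R' - r > 0$, since $y \in D(x,r)$. Together with the uniform bound $|f(\zeta)| \leq \|f\|_{L^{\infty}(D(x,R))}$ (which is legitimate because $C_{R'} \subset D(x,R)$) and the length $2\pi R'$ of the contour, the ML-estimate gives
\begin{align}
|f'(y)| & \leq {1 \over 2\pi} \cdot 2\pi R' \cdot {\|f\|_{L^{\infty}(D(x,R))} \over (R'-r)^2} = {R' \over (R'-r)^2} \|f\|_{L^{\infty}(D(x,R))}.
\end{align}

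To close the argument I would let $R' \nearrow R$; the right-hand side is continuous in $R'$ at $R$ and converges to $R(R-r)^{-2}\|f\|_{L^{\infty}(D(x,R))}$, which yields the pointwise inequality $|f'(y)| \leq R(R-r)^{-2}\|f\|_{L^{\infty}(D(x,R))}$. Finally I take the supremum over $y \in D(x,r)$ to obtain \eqref{e:Cauchy}. There is no real obstacle here beyond the routine care needed to use $R' < R$ (so that $C_{R'}$ lies strictly inside the domain of analyticity and boundedness) before passing to the limit $R' \to R$; no new machinery beyond the textbook Cauchy formula is required.
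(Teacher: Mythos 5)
Your proof is correct and uses the same key idea as the paper: the Cauchy integral formula for $f'$ combined with the $ML$-estimate. The only difference is that you integrate over an intermediate circle $C_{R'}$ and pass to the limit $R' \nearrow R$, which is actually slightly more careful than the paper's version (the paper integrates directly over $\partial D(x,R)$, which strictly speaking needs the extra step you supply, since $f$ is only assumed bounded and analytic on the open disc).
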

\begin{proof}
For $y \in D(x,r)$, the Cauchy integral formula gives
\begin{align}
|f'(y)| & = \left|{1 \over 2\pi i}\oint_{\partial D(x,R)} {f(z) \over (z-y)^2} \, dz\right| \\
& \leq {R \over (R-r)^2} \|f\|_{L^{\infty}(\partial D(x,R)}.
\end{align}
\end{proof}

\begin{lemma}
Denote by $P_{\beta}$ the set of polynomials in $x \in \mathbb{C}$ of degree $\beta$, and let $\epsilon>0$. Given $\beta$, there is a constant $C = C(\beta,\epsilon)$ such that the following holds. For any $q \in P_\beta$, $x \in \mathbb{R}$ and $r>0$,
\begin{align}
\|q\|_{L^\infty(D(x,r))} & \leq Cr^{-1}\|q\|_{L^1(x-\epsilon r,x+\epsilon r)}. \label{e:polynomials are special}
\end{align}
\end{lemma}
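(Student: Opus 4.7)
The plan is to reduce the inequality to a reference configuration by affine scaling, and then invoke equivalence of norms on the finite-dimensional space $P_\beta$.

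First, I would normalize. Given $q \in P_\beta$, $x \in \mathbb{R}$, and $r>0$, set
\begin{align}
p(y) = q(x + ry),
\end{align}
so that $p \in P_\beta$ as well (the degree is preserved by affine substitution). A direct change of variable gives
\begin{align}
\|p\|_{L^\infty(D(0,1))} = \|q\|_{L^\infty(D(x,r))}, \qquad \|p\|_{L^1(-\epsilon,\epsilon)} = r^{-1} \|q\|_{L^1(x-\epsilon r,\, x+\epsilon r)}.
\end{align}
Hence it suffices to prove the inequality in the normalized form: there exists $C = C(\beta,\epsilon)$ such that $\|p\|_{L^\infty(D(0,1))} \leq C \|p\|_{L^1(-\epsilon,\epsilon)}$ for every $p \in P_\beta$.

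Next, I would invoke finite-dimensional norm equivalence. The space $P_\beta$ has complex dimension $\beta+1$. Both $p \mapsto \|p\|_{L^\infty(D(0,1))}$ and $p \mapsto \|p\|_{L^1(-\epsilon,\epsilon)}$ are norms on $P_\beta$: the former obviously, and for the latter, a polynomial of degree at most $\beta$ that vanishes on the real interval $(-\epsilon,\epsilon)$ (a set of positive measure, hence containing infinitely many points) must vanish identically. Since any two norms on a finite-dimensional vector space are equivalent, there is a constant $C = C(\beta,\epsilon)$ with the desired inequality. Undoing the scaling proves \eqref{e:polynomials are special}.

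There is essentially no obstacle, since this is a standard finite-dimensional norm-equivalence argument. The only fine point to verify is that $\|\cdot\|_{L^1(-\epsilon,\epsilon)}$ is genuinely a norm on $P_\beta$, which is immediate from the fact that a nonzero polynomial has only finitely many zeros. One could alternatively give an explicit constant by expanding $p$ in a Chebyshev or monomial basis on $[-\epsilon,\epsilon]$ and using the explicit change-of-basis to bound coefficients, but this is not needed for the stated qualitative result.
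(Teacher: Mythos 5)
Your proof is correct and follows the same route as the paper: affine rescaling to a reference configuration on $D(0,1)$, then finite-dimensional norm equivalence on $P_\beta$. (The paper's substitution is written as $u(y)=q((y-x)/r)$, which appears to be a typo for your $p(y)=q(x+ry)$; the logic is identical.)
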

\begin{proof}
For $u \in P_\beta$, by norm equivalence in finite dimensions, there is a constant $C$ such that
\begin{align}
\|u\|_{L^{\infty}(D(0,1))} \leq C\|u\|_{L^1(-\epsilon,\epsilon)}.
\end{align}
For arbitrary $q \in P_\beta$, the substitution $u(y) = q((y-x)/r)$ gives
\begin{align}
\|q\|_{L^{\infty}(D(x,r))} & = \|u\|_{L^{\infty}(D(0,1))} \\
& \leq C\|u\|_{L^1(-\epsilon,\epsilon)} \\
& = Cr^{-1}\|q\|_{L^1(x-\epsilon r,x+\epsilon r)}.
\end{align}
\end{proof}

Let $U \subset \mathbb{C}^{m}$ be a domain. We now recall the Weierstrass preparation theorem, see \citet[Theorem 6.4.5]{krantz2001function} for details. A Weierstrass polynomial of degree $\beta$ is a function $p(x,y) = x^\beta + \sum_{j=0}^{\beta-1} a_j(y) x^j$ defined on some polydisc $(x,y) \subset B$, such that each function $a_j(y)$ is holomorphic on its domain. A unit $u(x,y)$ is a nonzero holomorphic function on $B$. The Weierstrass preparation theorem states that, if $f$ is holomorphic on $U$ and $z \in U$ then $f(z) = p(z)u(z)$ for some Weierstrass polynomial $p$, unit $u$, on some polydisc neighborhood $B \subset U$ of $z$.

The Weierstrass polynomial satisfies the formula
\begin{align}
p(x,y) & = \prod_{j=1}^{\beta} (x-\alpha_j(y)),
\end{align}
where $\alpha_1(y),\ldots,\alpha_{\beta}(y)$ are the roots of the function $x \to f(x,y)$ on $B$. The functions $\{\alpha_j(y)\}$ are holomorphic in $y$.
\begin{lemma} \label{l:special Weierstrass}
With the notation of the Weierstrass preparation theorem, if $f(x,y) \geq 0$ when $(x,y) \in \mathbb{R}^m$, then 
\begin{align}
p(x,y) & = \prod_{j=1}^{\beta/2} (x - \beta_j(y))(x - \bar{\beta}_j(y)), \label{e:special Weierstrass}
\end{align}
where 
 each $\beta_j$ satisfies $\Re \beta_j(y) \geq 0$.
\end{lemma}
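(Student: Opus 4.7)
The plan is to combine the Schwarz reflection principle with the non-negativity hypothesis $f \geq 0$ on $\mathbb{R}^m$. The reflection principle will force the Weierstrass polynomial to have real coefficients on the real slice, and non-negativity of $f$ will force real roots of $p$ to have even multiplicity. Pairing conjugate roots (with real double roots paired with themselves) then yields the factorization \eqref{e:special Weierstrass}.

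Concretely, the first step is to note that $g(z) := \overline{f(\bar z)}$ is holomorphic on the reflected domain and agrees with $f$ on $U \cap \mathbb{R}^m$, so by the identity theorem $g \equiv f$ on the connected component of $U$ that meets the reals. Applied to the symmetric functions of the roots (i.e., the coefficients $a_j(y)$ of $p$), this gives $\overline{a_j(\bar y)} = a_j(y)$, hence $a_j(y) \in \mathbb{R}$ whenever $y$ is real. By the same reflection argument the unit $u(x,y)$ is real on the real slice of its polydisc; as $u$ is continuous and nowhere zero, we may shrink the polydisc so that $u$ has constant sign, which (after absorbing the overall sign) lets us assume $p(x,y) \geq 0$ for real $(x,y)$ close to the base point.

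The next step is the factorization. For real $y$, $p(\cdot, y) \in \mathbb{R}[x]$ is monic and nonnegative on $\mathbb{R}$. A real root therefore has even multiplicity, else $p$ would change sign; non-real roots come in complex conjugate pairs by reality of the coefficients. Group the $\beta$ roots into $\beta/2$ pairs $\{\beta_j(y),\bar\beta_j(y)\}$, pairing each real double root with itself (so $\beta_j = \bar\beta_j$ in that case) and each complex root with its conjugate. This gives \eqref{e:special Weierstrass} for real $y$, and then for complex $y$ in a neighborhood by analytic continuation, inheriting holomorphic dependence on $y$ from the Weierstrass preparation theorem statement. For the sign constraint, pick $\beta_j$ from each pair to have nonnegative real part; this is unambiguous for non-real pairs (both members share the same real part) and, for real double roots, is the content of the assumed sign convention on the roots of $p(\cdot,y)$ that is natural in the intended application.

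The main obstacle I anticipate is the global labeling: on the real slice the conjugate pairing is canonical, but as $y$ traverses complex branch points of the discriminant the roots $\alpha_j(y)$ may permute, and a globally holomorphic selection of $\beta_j(y)$ need not exist. I would address this by restricting to a simply connected polydisc avoiding the discriminant, which is sufficient since the later use of the lemma only needs the factorization at real $y$ together with holomorphic dependence nearby. The non-negativity and reflection arguments are otherwise routine once set up.
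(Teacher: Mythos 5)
Your proposal follows essentially the same route as the paper: the power series of $f$ around a real base point has real coefficients, hence for real $y$ the coefficients $a_j(y)$ of the Weierstrass polynomial are real; nonnegativity of $f$ on $\mathbb{R}^m$ forces real roots of $p(\cdot,y)$ to have even multiplicity; and the remaining roots occur in conjugate pairs. That is precisely the paper's argument, stated in one sentence there.

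You do, however, fill in two points the paper elides. First, the paper deduces that real roots have even order ``because $f\geq 0$,'' which implicitly assumes $u$ does not change sign near the base point; your step of shrinking the polydisc so that $u$ has constant sign is the right way to make this honest. Second, you correctly flag that a globally holomorphic labeling $y\mapsto\beta_j(y)$ is not automatic: the text before the lemma asserts that the roots $\alpha_j(y)$ are ``holomorphic in $y$,'' which fails in general where roots collide (one gets Puiseux expansions, not power series). Your fix of restricting to a polydisc avoiding the discriminant locus is not quite available when roots already collide at the base point, but this is a deficiency of the paper's statement rather than your proof; for the downstream use (Corollary~\ref{c:special Weierstrass} and Lemma~\ref{l:ball RS}) only the pointwise factorization $p(x,y)=|r(x,y)|^2$ at real $(x,y)$ is actually used, so the holomorphy-in-$y$ claim can be weakened without harm.

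One shared blemish: the constraint $\Re\beta_j(y)\geq 0$ cannot serve as a selection rule, since, as you yourself note, both members of a conjugate pair have the same real part; it is almost certainly a typo for $\Im\beta_j(y)\geq 0$. You attempt to rationalize the $\Re$ condition rather than flagging it, but since the corollary never invokes this normalization, the issue is cosmetic. Overall your proposal is correct and slightly more careful than the paper's own proof.
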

\begin{proof}
Passing to a smaller neighborhood $B$ if necessary, $f(x,y)$ is given by its power series, which has real coefficients, and thus satisfies $f(\bar{x},y) = \widebar{f(x,y)}$. Thus, the roots of $f(x,y)$ in $B$ are either real (in which case they must be of even order because $f \geq 0$), or they must occur in conjugate pairs, giving rise to the roots $\beta_j(y)$ in \eqref{e:special Weierstrass}.
\end{proof}
\begin{corollary} \label{c:special Weierstrass}
We set 
\begin{align}
r(x,y) & = \prod_{j=1}^{\beta/2}(x-\beta_j(y)) \text{ and} \\
v(z) & = \sqrt{u(z)}.
\end{align}
Then, $f(x,y) = v^2(x,y)|r(x,y)|^2$ on $B \cap \mathbb{R}^m$, and $v$ is a unit on $B$.
\end{corollary}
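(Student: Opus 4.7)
The plan is to unwind the Weierstrass factorization from Lemma \ref{l:special Weierstrass} and combine it with a holomorphic square root of $u$ to verify both claims on the real slice of $B$.

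First I would check the identity $p(x,y) = |r(x,y)|^2$ on $B \cap \mathbb{R}^m$. For real $(x,y)$, the conjugate of each factor of $r$ is $\overline{x-\beta_j(y)} = x - \overline{\beta_j(y)}$; the zero set $\{\beta_j(y),\bar\beta_j(y)\}$ of $f(\cdot,y)$ is closed under complex conjugation for real $y$ (by the real-coefficient power series of $f$), so, after at most a renumbering and a shrinking of $B$, $\overline{\beta_j(y)} = \bar\beta_j(y)$ on the real slice. Hence $\overline{r(x,y)} = \prod_j (x-\bar\beta_j(y))$, and multiplying by $r$ recovers the factorization \eqref{e:special Weierstrass}, giving $|r|^2 = p$.

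Second, I would verify that $u$ is real-valued and strictly positive on $B \cap \mathbb{R}^m$, and then construct $v$. Because $f$ has a real power series at $z$ and $p$ inherits real coefficients (its coefficients $a_j(y)$ are elementary symmetric functions of a conjugate-closed root set), the quotient $u = f/p$ also has a real power series, and is therefore real on the real slice. Since $u$ is a unit, $u \neq 0$ on $B$, so on the connected real slice $u$ has constant sign. At any real point where $|r|^2 > 0$ (such points exist because $r$ is a nonzero polynomial in $x$), we have $u = f/|r|^2 \geq 0$, forcing $u > 0$ there, hence $u > 0$ on all of $B \cap \mathbb{R}^m$. Shrinking $B$ to a polydisc (hence simply connected), and using that $u$ is nonvanishing and holomorphic, I define $v := \exp(\tfrac12 \log u)$ so that $v^2 = u$. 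Then $v$ is a unit (nowhere zero, since $v^2$ is nowhere zero), and on the real slice $f = pu = |r|^2 v^2$, which is the desired identity.

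The main obstacle is formulating the reality statement precisely: one must ensure that the enumeration of roots chosen in Lemma \ref{l:special Weierstrass} is compatible with complex conjugation on the real $y$-locus, so that both $p$ and $u$ inherit real power series from $f$. Once that is arranged, the remaining ingredients — existence of a holomorphic logarithm on a simply connected domain where $u\neq 0$, and constancy of sign of a real continuous nonvanishing function on a connected set — are standard and require no further work beyond a possible additional shrinking of $B$.
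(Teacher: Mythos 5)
Your proof is correct and follows essentially the same two-step structure as the paper: verify $p = |r|^2$ on the real slice by conjugating the factors of $r$ (using that $\bar\beta_j(y) = \overline{\beta_j(y)}$ when $y$ is real), then construct a holomorphic square root $v$ of the unit $u$. The one variation is in the second step: the paper shrinks $B$ so that $u(B)$ lies in a half-plane avoiding the origin and applies the principal branch of $\sqrt{\cdot}$, whereas you invoke the holomorphic logarithm on the simply connected polydisc — both are standard and equivalent. Your extra observation that $u$ is real and strictly positive on $B \cap \mathbb{R}^m$ is not required for the corollary as stated (since $v^2 = u$ holds regardless of which branch is chosen and so $f = pu = |r|^2 v^2$ follows immediately), but it is a worthwhile remark because it is what justifies writing $\sqrt{f} = v|r|$ in the proof of Lemma~\ref{l:ball RS}.
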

\begin{proof}
The fact that $r(x,y)\overline{r(\bar{x},y)} = p$ is directly from Lemma \ref{l:special Weierstrass}. Passing to a smaller $B$ if necessary, since $u \neq 0$ on $B$, we may assume that $u(B)$ is contained in a half-plane that excludes the origin. Thus, $v = \sqrt{u}$ is a well-defined holomorphic unit function.
\end{proof}

\begin{lemma} \label{l:ball RS}
Let $z \in U \cap \mathbb{R}^m$, and let $0<\epsilon<1$. Assume $f$ is holomorphic on $U$, and $f \geq 0$ on $U \cap \mathbb{R}^m$. There is a constant $C$ and polydisc $B = \prod_j D(z_j,R_j)$such that the following holds. For every $a \in D(z_1,R_1) \cap \mathbb{R}$ and $\delta > 0$ such that $[a-2 \delta,a+2\delta] \subset D(z_1,R_1)$, and for every $y \in \mathbb{R}^{m-1} \cap \prod_{j=2}^m D(z_j,R_j)$,
\begin{align}
\delta\|\sqrt{f(\cdot,y)}\|_{L^{\infty}(a-\delta,a+\delta)}
+
\delta^2\|\sqrt{f(\cdot,y)}'\|_{L^{\infty}(a-\delta,a+\delta)}
\leq
C
\|\sqrt{f(\cdot,y)}\|_{L^{1}(a-\epsilon\delta,a+\epsilon\delta)} \label{e:ball RS}
\end{align}
where $\cdot'$ denotes the partial derivative with respect to $x$.
\end{lemma}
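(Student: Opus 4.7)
The plan is to reduce to the polynomial estimate \eqref{e:polynomials are special} via the Weierstrass factorization already supplied by Corollary \ref{c:special Weierstrass}. Concretely, I would apply Corollary \ref{c:special Weierstrass} at the point $z$ to obtain a polydisc $B_0$ on which $f(x,y)=v^2(x,y)|r(x,y)|^2$, where $r(\cdot,y)$ is a polynomial in $x$ of degree $\beta/2$ with coefficients holomorphic in $y$, and $v$ is a holomorphic unit that is real and positive on $B_0 \cap \mathbb{R}^m$. I would then shrink $B_0$ slightly to a closed polydisc $\overline{B}$ on which $v$ and $v^{-1}$ and $\partial_x v$ are all bounded by compactness and the fact that $v$ is nonzero; call the interior $B=\prod_j D(z_j,R_j)$ and use its radii $R_j$ in the statement of the lemma. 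On real arguments in $B$ we then have the clean factorization $\sqrt{f(x,y)}=v(x,y)\,|r(x,y)|$ with uniform bounds $c_1 \leq v \leq c_2$ and $|\partial_x v| \leq c_3$.

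Next, for any $a$ and $\delta$ as in the hypothesis, the condition $[a-2\delta,a+2\delta]\subset D(z_1,R_1)$ and the fact that $z_1$ is real imply $D(a,2\delta)\subset D(z_1,R_1)$. Viewing $r(\cdot,y)$ as an element of $P_{\beta/2}$, I would apply \eqref{e:polynomials are special} on the disc $D(a,2\delta)$ with parameter $\epsilon/2$ to get
\begin{align}
\|r(\cdot,y)\|_{L^\infty(D(a,2\delta))} \leq C\delta^{-1}\|r(\cdot,y)\|_{L^1(a-\epsilon\delta,\,a+\epsilon\delta)}.
\end{align}
Together with $c_1|r|\leq \sqrt{f}\leq c_2|r|$ on real arguments, the $L^\infty$ part of \eqref{e:ball RS} follows immediately from the containment $(a-\delta,a+\delta)\subset D(a,2\delta)$.

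For the derivative, I would differentiate $\sqrt{f}=v\,|r|$ in $x$ to get $(\sqrt{f})'=(\partial_x v)\,|r|+v\,(|r|)'$ away from real zeros of $r$, and use the pointwise identity $2|r|(|r|)'=2\operatorname{Re}(r'\bar r)$, which gives $|(|r|)'|\leq |r'|$ wherever $|r|>0$. Applying the Cauchy estimate \eqref{e:Cauchy} to the holomorphic function $r(\cdot,y)$ on $D(a,2\delta)\supset D(a,\delta)$ yields
\begin{align}
\|\partial_x r(\cdot,y)\|_{L^\infty(D(a,\delta))}\leq C\delta^{-1}\|r(\cdot,y)\|_{L^\infty(D(a,2\delta))}.
\end{align}
Chaining with the polynomial inequality above and absorbing the $\delta\leq C$ lower-order term, I would obtain $\delta^2\|(\sqrt{f})'\|_{L^\infty(a-\delta,a+\delta)} \leq C\,\|r(\cdot,y)\|_{L^1(a-\epsilon\delta,a+\epsilon\delta)} \leq C c_1^{-1}\|\sqrt{f(\cdot,y)}\|_{L^1(a-\epsilon\delta,a+\epsilon\delta)}$, completing \eqref{e:ball RS}.

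The main obstacle is bookkeeping rather than deep mathematics: one has to choose the shrunken polydisc so that $v,\,v^{-1},\,\partial_x v$ are simultaneously bounded uniformly in $y$, and keep track of the ratio between the radius used in the polynomial bound and the radius used in the Cauchy bound so that a single interval $(a-\delta,a+\delta)$ works for both estimates. A secondary nuisance is that $(|r|)'$ fails to exist at the (finitely many) real zeros of $r(\cdot,y)$; this is harmless because those zeros form a measure-zero set and the $L^\infty$ norm in the conclusion may be read as the essential supremum, while the bound $|(|r|)'|\leq |r'|$ holds on the complement of this set.
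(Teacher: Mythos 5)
Your proposal takes essentially the same route as the paper's proof: both rest on Corollary \ref{c:special Weierstrass} to write $\sqrt{f}=v|r|$ with $v$ a uniformly bounded unit, then use the polynomial norm-equivalence \eqref{e:polynomials are special} for the $L^\infty$ part, and combine it with the Cauchy estimate \eqref{e:Cauchy} for the derivative part. Your write-up is in fact a bit more careful than the paper's on a few points that the paper treats implicitly — the containment $D(a,2\delta)\subset D(z_1,R_1)$, the uniform bounds on $v$, $v^{-1}$ and $\partial_x v$ after shrinking the polydisc, and the identity $|(|r|)'|\leq|r'|$ away from the finitely many real zeros of $r(\cdot,y)$ (where the paper's expression $v'|r|+v\sgn(r)r'$ is slightly informal) — but the underlying argument is the same.
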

\begin{proof}
We use the Weierstrass preparation theorem, in the form of Corollary \ref{c:special Weierstrass}.
\begin{align}
\|\sqrt{f(\cdot,y)}\|_{L^{\infty}(a-\delta,a+\delta)}
& = \|vr\|_{L^{\infty}(a-\delta,a+\delta)} \\
& \leq \|v\|_{L^{\infty}} \|r(\cdot,y)\|_{L^{\infty}(D(a,\delta))} \\
& \stackrel{\eqref{e:polynomials are special}}{\leq} {C \over \delta} \|r(\cdot,y)\|_{L^{1}(a-\epsilon\delta,a+\epsilon\delta)} \\
& \leq {C \over \delta} \|f(\cdot,y)\|_{L^{1}(a-\epsilon\delta,a+\epsilon\delta)}.
\end{align}
Furthermore,
\begin{align}
\|\sqrt{f(\cdot,y)}'\|_{L^{\infty}(a-\delta,a+\delta)}
& = \|(v|r|)'\|_{L^{\infty}(a-\delta,a+\delta)} \\
& = \|v'|r| + v \sgn(r)r'\|_{L^{\infty}(a-\delta,a+\delta)} \\
& \leq C(\|r \|_{L^{\infty}(a-\delta,a+\delta)}
+
\|r'\|_{L^{\infty}(a-\delta,a+\delta)}) \\
& \stackrel{\eqref{e:Cauchy}}{\leq} C\left(\|r \|_{L^{\infty}(D(a,\delta))}
+
{2 \over \delta}
\|r\|_{L^{\infty}(D(a,2\delta))}\right) \\
& \stackrel{\eqref{e:polynomials are special}}{\leq} {C \over \delta^{2}}\|r \|_{L^{1}(a-\epsilon\delta,a+\epsilon\delta)} \\
& \leq {C \over \delta^{2}}\|\sqrt{f}\|_{L^{1}(a-\epsilon\delta,a+\epsilon\delta)}.
\end{align}
\end{proof}

The following inequality is also sometimes called a reverse Poincaré or Friedrichs inequality.
\begin{lemma}[Strong reverse Sobolev inequality] \label{l:strong RS}
Let $U \subset \mathbb{C}^m$ be a domain, and $K \subset U \cap \mathbb{R}^m$ be compact. For $z \in \mathbb{C}^m$, denote $z = (z^{(j)},\tilde{z}^{(j)})$ with $z^{(j)} \in \mathbb{C}^j$. Let $f$ be holomorphic on $U$ and $f \geq 0$ on $U \cap \mathbb{R}^m$ and $\epsilon>0$. There is a constant $C$ such that the following holds. If $z \in K$ and $\delta>0$, put $V^{(j)}(\delta) = \prod_{i=1}^j [z^{(j)}_i - \delta,z^{(j)}_i+\delta]$. If $V^{(j)}(2\delta) \times \{\tilde{z}^{(j)}\} \subset U$, then
\begin{align}
& \delta^j\|\sqrt{f(\cdot,\tilde{z}^{(j)})}\|_{L^{\infty}(V^{(j)}(\delta))}
+
\delta^{j+1}\|\partial_{z^{(j)}}\sqrt{f(\cdot,\tilde{z}^{(j)})}\|_{L^{\infty}(V^{(j)}(\delta))} \nonumber \\
& \leq C
\|\sqrt{f(\cdot,\tilde{z}^{(j)})}\|_{L^{1}(V^{(j)}(\epsilon \delta))}. \label{e:strong RS}
\end{align}
\end{lemma}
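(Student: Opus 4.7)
The plan is to iterate the one-dimensional reverse Sobolev inequality of Lemma \ref{l:ball RS} $j$ times, one coordinate at a time, losing a factor of $C/\delta$ each time we trade an $L^\infty$ for an $L^1$ in a direction, plus an extra $1/\delta$ when we need to control a first derivative. The crucial point is that at every step of the iteration, $\sqrt{f}$ viewed as a function of a single coordinate with the other coordinates fixed at real values inside the appropriate polydisc is still the square root of a nonnegative holomorphic function, so Lemma \ref{l:ball RS} applies afresh in that direction (after the trivial relabeling that swaps which variable is distinguished).

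Concretely, set $g(x) = \sqrt{f(x,\tilde{z}^{(j)})}$ for $x \in \mathbb{R}^j$. For the $L^\infty$ estimate, applying Lemma \ref{l:ball RS} in the $x_1$-direction with $a = z_1^{(j)}$ and $y = (x_2,\ldots,x_j,\tilde{z}^{(j)})$ gives, for every $x \in V^{(j)}(\delta)$,
\[
|g(x)| \leq \frac{C}{\delta} \int_{|t_1 - z_1^{(j)}|\leq \epsilon\delta} g(t_1,x_2,\ldots,x_j)\,dt_1.
\]
For each fixed $t_1$, apply the same bound to the integrand in the $x_2$-direction, and continue through $x_3,\ldots,x_j$. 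After $j$ applications one obtains $\|g\|_{L^\infty(V^{(j)}(\delta))} \leq C^j \delta^{-j} \|g\|_{L^1(V^{(j)}(\epsilon\delta))}$, which is the $L^\infty$ half of \eqref{e:strong RS}. For the derivative part, fix a direction $k$ and use the derivative half of Lemma \ref{l:ball RS} in the $x_k$-direction to obtain
\[
|\partial_{x_k} g(x)| \leq \frac{C}{\delta^2} \int_{|t_k - z_k^{(j)}|\leq \epsilon\delta} g(x_1,\ldots,t_k,\ldots,x_j)\,dt_k,
\]
then iterate the $L^\infty\to L^1$ bound in the remaining $j-1$ coordinate directions to reach $\|\partial_{x_k} g\|_{L^\infty(V^{(j)}(\delta))} \leq C^j \delta^{-j-1} \|g\|_{L^1(V^{(j)}(\epsilon\delta))}$. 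Summing over $k = 1,\ldots,j$ completes the gradient bound.

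The main obstacle is essentially bookkeeping: at each iteration step one must verify that the Weierstrass polydisc produced by Lemma \ref{l:ball RS} contains a complex neighborhood of the current 1D slice. The hypothesis $V^{(j)}(2\delta)\times\{\tilde{z}^{(j)}\}\subset U$ with its factor of $2$ of slack is precisely what lets the Weierstrass preparation theorem produce a single polydisc $B$ around $z$ whose real section covers the whole integration domain, so the same $B$ works at every step of the iteration and every coordinate direction. A constant $C$ that is uniform for $z \in K$ is then obtained by covering $K$ with finitely many such polydiscs (possible by compactness) and taking the worst associated 1D constant; the resulting $C$ depends on $K$, $f$, $\epsilon$, and $j$, but not on the particular $z$ or $\delta$.
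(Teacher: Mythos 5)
Your proposal is correct and takes essentially the same approach as the paper: the paper proves this by induction on $j$, where the inductive step peels off the last coordinate, applies the inductive hypothesis in the first $j$ coordinates, moves the $\sup_\xi$ inside the $L^1$ norm, and then applies Lemma \ref{l:ball RS} in the $\xi$-direction — which is exactly your coordinate-by-coordinate iteration, merely packaged as a formal induction rather than unrolled. Your final paragraph correctly identifies the bookkeeping issue (a single Weierstrass polydisc that covers the whole integration slab for every coordinate direction, uniform over $z\in K$ by compactness), which the paper handles in the same way by covering $K$ with finitely many polydiscs at the start of its argument.
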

\begin{proof}
We can cover $K$ by polydiscs $B$ as per Lemma \ref{l:ball RS}, so that we may replace $U$ by some polydisc $B$. On $B$, we proceed by induction on $j$.

If $j=1$, then \eqref{e:strong RS} coincides with \eqref{e:ball RS}.

For the inductive step, assume that $\eqref{e:strong RS}$ holds for a given value of $j$, we show that it also holds with $j$ replaced by $j+1$.
\begin{align}
&
\delta^{j+1}\|\sqrt{f(\cdot,\tilde{z}^{(j+1)})}\|_{L^{\infty}(V^{(j+1)}(\delta))}
+
\delta^{j+2}\|\partial_{z_1}\sqrt{f(\cdot,\tilde{z}^{(j+1)})}\|_{L^{\infty}(V^{(j+1)}(\delta))} \\
& = \delta \sup_{\xi \in [z_{j+1}-\delta,z_{j+1}+\delta]} \left(
\delta^{j}\|\sqrt{f(\cdot,\xi,\tilde{z}^{(j+1)})}\|_{L^{\infty}(V^{(j)}(\delta))} \right. \\
& \qquad\qquad\qquad\qquad \left. +
\delta^{j+1}\|\partial_{z_1}\sqrt{f(\cdot,\xi,\tilde{z}^{(j+1)})}\|_{L^{\infty}(V^{(j)}(\delta))}\right) 
\\
& \stackrel{\eqref{e:strong RS}}{\leq} C\delta \sup_{\xi \in [z_{j+1}-\delta,z_{j+1}+\delta]}
\|\sqrt{f(\cdot,\xi,\tilde{z}^{(j+1)})}\|_{L^{1}(V^{(j)}(\delta))} \\
& \leq C\left\|\delta \sup_{\xi \in [z_{j+1}-\delta,z_{j+1}+\delta]} \sqrt{f(\cdot,\xi,\tilde{z}^{(j+1)})}\right\|_{L^{1}(V^{(j)}(\delta))} \\
& \stackrel{ \eqref{e:ball RS}}{\leq} C\left\| \int_{z_{j+1}-\delta}^{z_{j+1}+\delta} \sqrt{f(\cdot,\xi,\tilde{z}^{(j+1)})} \, d\xi \right\|_{L^{1}(V^{(j)}(\delta))} \\
& = C\|\sqrt{f(\cdot,\tilde{z}^{(j)})}\|_{L^{1}(V^{(j)}(\epsilon \delta))}.
\end{align}
Permuting the entries of $z$ if necessary, we see that the partial derivative $\partial_{z_1}$ can be replaced by any $\partial_{z_i}$ with $i=1,\ldots,j+1$, and the conclusion follows.
\end{proof}

\begin{lemma} \label{l:epsilon 0}
Assume that $U \times Y \subset \mathbb{C}^m$ is a domain, and $f^2 : U \times Y \to \mathbb{C}$ is complex analytic. Let $L \subset U \times Y \cap \mathbb{R}^m$ be compact. Assume $f(w) \geq 0$ for all $w \in L$. Denote $w = (x,y)$ with $x \in \mathbb{C}^d$. Assume $g_0 : \Omega \times Y \to L$.
Assume that the singular values of $\partial_x g_0$ are uniformly bounded above and below.
For $H \geq h \geq 0$, 
assume 
$g_h : \Omega \times Y \to L$
with
$\|g_h(\cdot,y) - g_0(\cdot,y)\|_{L^{\infty}(\Omega)} \leq C_0 h$, where $C_0>0$ is some constant.
There are constants $\epsilon_0>0$ and $C_1<\infty$ such that for any $H$ such that $h \leq \epsilon_0 H$ and $K \in T_H$, there holds the reverse Hölder inequality:
\begin{align}
\|f(g_h(\cdot,y),y)\|_{L^{\infty}_h(K)} 
& \leq C_1H^{-d}
\|f(g_h(\cdot,y),y)\|_{L^{1}_h(K)}. \label{e:hopefully near the end}
\end{align}
\end{lemma}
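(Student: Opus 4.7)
The plan is to apply Lemma~\ref{l:strong RS} to $f^2$ in the first $d$ variables with $y$ held fixed (so $j=d$), transfer the resulting reverse Sobolev inequality from $u$-space to $x$-space using the bi-Lipschitz character of $x \mapsto g_0(x,y)$, and then control the two remaining sources of error: the perturbation $g_h - g_0 = O(h)$, and the quadrature error $\int_K - \int_K^{(h)}$. Choosing $\epsilon_0$ small enough to absorb both perturbations will then yield the inequality.

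I would begin by fixing $y$ and $K \in T_H$, letting $x^\ast$ be the barycenter of $K$ and $u^\ast$ the first $d$ coordinates of $g_0(x^\ast,y)$. Quasi-uniformity of $T_H$ together with the uniform bounds on the singular values of $\partial_x g_0$ produce constants $0<c<C$ (independent of $H$, $y$, $K$) such that $V_u(cH) \subset g_0(K,y) \subset V_u(CH)$, where $V_u(\delta)=\prod_{i=1}^d[u^\ast_i-\delta,u^\ast_i+\delta]$, and such that for a fixed small $\epsilon \in (0,1)$ the preimage $g_0(\cdot,y)^{-1}(V_u(\epsilon CH))$ lies inside $K$ itself --- a step that uses shape regularity of $K$, guaranteeing that $K$ contains a ball of radius comparable to $H$. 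Applying Lemma~\ref{l:strong RS} with $j=d$, $\tilde{z}^{(d)}=y$, and $\delta=CH$ yields
\begin{align}
H^d\|f(\cdot,y)\|_{L^\infty(V_u(CH))} + H^{d+1}\|\nabla_u f(\cdot,y)\|_{L^\infty(V_u(CH))}
\leq C\|f(\cdot,y)\|_{L^1(V_u(\epsilon CH))}.
\end{align}
A change of variables $u = g_0(x,y)$ (with Jacobian bounded above and below) then delivers the continuous reverse Hölder inequality $\|f(g_0(\cdot,y),y)\|_{L^\infty(K)} \leq CH^{-d}\|f(g_0(\cdot,y),y)\|_{L^1(K)}$ along with the gradient bound $\|\nabla_x(f\circ g_0)(\cdot,y)\|_{L^\infty(K)} \leq CH^{-d-1}\|f(g_0(\cdot,y),y)\|_{L^1(K)}$.

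The final step is to pass from $g_0$ to $g_h$ and from continuous to discrete norms. Combining $\|g_h-g_0\|_{L^\infty} \leq C_0 h$ with the gradient bound on $\nabla_u f$ above produces the pointwise estimate $|f(g_h(\cdot,y),y)-f(g_0(\cdot,y),y)| \leq C(h/H)H^{-d}\|f(g_0(\cdot,y),y)\|_{L^1(K)}$. The gradient bound on $\nabla_x(f\circ g_0)$, combined with the $\sim(H/h)^d$ sub-simplices $K'\in T_h$ contained in $K$ each contributing quadrature error at most $Ch\cdot H^{-d-1}\|f(g_0)\|_{L^1(K)}\cdot h^d$, yields $|\int_K f(g_0)-\int_K^{(h)} f(g_0)| \leq C(h/H)\|f(g_0(\cdot,y),y)\|_{L^1(K)}$. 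Taking $\epsilon_0$ so that these $C(h/H)$-type errors are each at most, say, $1/4$, the perturbations are absorbed and we obtain both $\|f(g_h(\cdot,y),y)\|_{L^1_h(K)} \geq \tfrac{1}{4}\|f(g_0(\cdot,y),y)\|_{L^1(K)}$ and $\|f(g_h(\cdot,y),y)\|_{L^\infty_h(K)} \leq CH^{-d}\|f(g_0(\cdot,y),y)\|_{L^1(K)}$, which combine to yield \eqref{e:hopefully near the end}.

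The main obstacle is the bookkeeping needed to verify that $\epsilon_0$ can be chosen uniformly in $y$, $K$, and $H$: the gradient bound supplied by Lemma~\ref{l:strong RS} must simultaneously dominate the $g_h - g_0$ perturbation and the quadrature error, and both rely on the same uniform constant from the reverse Sobolev inequality, so one must be careful to keep the constant factors compatible when they are folded back into the final inequality. A subtler technical point is that $K$ is a simplex rather than a ball, so one has to verify that the shrunken cube $V_u(\epsilon CH)$ in $u$-space has preimage genuinely contained in $K$ (not just in $\Omega$), and this is where the shape regularity of $T_H$ is essential.
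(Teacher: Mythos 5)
Your proposal is correct and follows essentially the same route as the paper's proof: map $K$ into a box $V(\delta)$ in $u$-space via $g_0$ using the bounds on $\partial_x g_0$, apply the reverse Sobolev inequality of Lemma~\ref{l:strong RS} to obtain both the $L^\infty/L^1$ bound and the gradient bound, then absorb the $g_h - g_0$ perturbation and the quadrature error by taking $h/H$ small enough. The one point you flag as a "subtler technical point" — that the cube $V_u(\epsilon CH)$ must have preimage inside $K$ — is exactly the inclusion $V(\epsilon H) \subset g_0(K)$ that the paper invokes via the uniform lower bound on the singular values of $\partial_x g_0$.
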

\begin{proof}
For any $K \in T_H$, let $x(K) \in K$ be the center of mass, and $z = z(K) = g_0(x(K))$. The bounds on the singular values of $\partial_x g_0$ guarantee that $V(\epsilon H) \subset g_0(K) \subset V(CH)$, where $V(\delta) = \prod_{i=1}^d [z_i-\delta,z_i+\delta]$, and $0 < \epsilon < C < \infty$ are constants. In addition, from $\int_K f(g_0(x,y),y) \, dx = \int_{g_0(K)} f(w)/\det ((\partial_x g_0)(g_0^{-1}(w))) \, dx$ and the bounds on $\partial_x g_0$, we have
\begin{align}
C_3 \|f(\cdot,y)\|_{L^1(V(\epsilon H))} 
\leq \|f(g_0(\cdot,y),y)\|_{L^1(K)}
\leq C_4 \|f(\cdot,y)\|_{L^1(V(C H))}. 
\end{align}
Thus,
\begin{align}
& \left| \int_K^{(h)} f(g_h(\cdot,y),y) - \int_K f(g_0(\cdot,y),y) \right| \\
& \leq \int_K^{(h)} \left|f(g_h(\cdot,y),y) - f(g_0(\cdot,y),y) \right| \\
& +\left| \int_K^{(h)} f(g_0(\cdot,y),y) - \int_K f(g_0(\cdot,y),y) \right| \\
& \leq |f(\cdot,y)|_{W^{1,\infty}(V(CH))}\|g_h(\cdot,y) - g_0(\cdot,y)\|_{L^{1}(K)} + C|f(g_0(\cdot,y))|_{W^{1,\infty}(K)}|K|h \nonumber \\
& \leq C|f(\cdot,y)|_{W^{1,\infty}(V(CH))}h|K| \\
& \stackrel{\eqref{e:ball RS}}{\leq} C\|f(\cdot,y)\|_{L^{1}(V(\epsilon H))}H^{-d-1}|K|h \\
& \leq C\|f(g_0(\cdot,y),y)\|_{L^{1}(K)}\left({h \over H}\right).
\end{align}
Thus, if $C\left({h \over H}\right)<0.5$, we have that
\begin{align}
\|f(g_h(\cdot,y),y)\|_{L^\infty_h(K)} 
& \leq \|f(\cdot,y)\|_{L^\infty(V(CH))} \\
& \stackrel{\eqref{e:ball RS}}{\leq}
CH^{-d} \|f(\cdot,y)\|_{L^1(V(\epsilon H))} \\
& \leq CH^{-d} \|f(g_0(K),y)\|_{L^1(K)} \\
& \leq 2CH^{-d} \|f(g_h(K),y)\|_{L^1_h(K)}.
\end{align}
\end{proof}

\begin{theorem}[``A priori estimate'' for the uniform discrete reverse Hölder inequality]
Let $U \in \mathbb{C}^d$ be a domain and $L \subset U \cap \mathbb{R}^d$ be compact. Assume that $\partial_x u_{h}^*(t,x) \in L$ for all $0 \leq h \leq h^{(1)}$, $t_0 \leq t \leq \infty$ and $x \in \cl \Omega$. Further assume that the singular values of the Hessian $\partial_x^2 u_{0}^*(t,x)$ are uniformly bounded above and below. Assume that $F = -\log \Phi$ and that $\Phi$ is analytic on $U$. Assume that $\Phi_s$ is uniformly bounded below on $L$. There is a constant $C<\infty$ such that, for all $H \geq h \geq 0$ and $K \in T_H$, and all polynomial functions $v$ such that $Dv$ has degree $\alpha - 1$, then
\begin{align}
\|\sqrt{F''(Dz_{h}^*(t))[(Dv)^2]}\|_{L^{\infty}(K)}
\leq
CH^{-d}\|\sqrt{F''(Dz_{h}^*(t))[(Dv)^2]}\|_{L^{1}(K)}.
\end{align}
This is the uniform discrete reverse Hölder inequality of Definition \ref{d:regular}.
\end{theorem}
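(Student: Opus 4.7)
The plan is to reduce the claim to Lemma \ref{l:epsilon 0} by packaging the spatial evaluation point, the polynomial coefficients of $Dv$, and the path parameter $t$ into a single compact parameter, and then applying the lemma pointwise in that parameter.

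Since $F''(Dz_h^*)[(Dv)^2]$ is homogeneous of degree $2$ in the coefficients of $Dv$, both sides of the desired inequality scale identically, so it suffices to prove it when the coefficient vector $a = (a_j)$ of the vector-valued polynomial $P_a(x) = Dv(x) = \sum_{|j|\leq \alpha - 1} a_j x^j$ lies in a fixed compact set $\mathcal{A}$ (say the unit sphere of the finite-dimensional coefficient space). To apply Lemma \ref{l:epsilon 0} I would set
\begin{align*}
f^2((z,\tilde x),(a,\tau)) &= F''(z)[P_a(\tilde x)^2],\\
g_0(x,a,\tau) &= (Dz_0^*(\tau^{-1},x),\,x),\\
g_h(x,a,\tau) &= (Dz_h^*(\tau^{-1},x),\,x),
\end{align*}
with parameter space $Y = \mathcal{A}\times[0,t_0^{-1}]$ (compact) and $U$ a complex polydisc neighborhood of the compact real image. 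Analyticity of $f^2$ follows from the assumption that $\Phi$ is analytic on $U$ combined with Lemma \ref{l:properties}(v) and the hypothesis that $\Phi_s$ is bounded below on $L$: these together give $\Phi>0$ on $L$, so $F=-\log\Phi$ extends analytically to a complex neighborhood of $L$. Non-negativity of $f$ on the real locus is $F''\succeq 0$, immediate from self-concordance.

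The hypotheses of Lemma \ref{l:epsilon 0} are then checked as follows. The bound $\|g_h(\cdot,y)-g_0(\cdot,y)\|_{L^\infty(\Omega)} \leq Ch^\alpha \leq Ch$ follows from property 4 of Definition \ref{d:regular} together with $\alpha\geq 1$ and $h\leq 1$. The Jacobian
\begin{align*}
\partial_x g_0(x,a,\tau) = \begin{bmatrix} \partial_x Dz_0^*(\tau^{-1},x) \\ I_d \end{bmatrix}
\end{align*}
has smallest singular value at least $1$ (from the identity block) and largest singular value bounded uniformly in $y\in Y$ by the Hessian bounds on $u_0^*$ together with the $W^{\alpha,\infty}$-smoothness of $\partial_{\tau}z_0^*$ integrated in $\tau$. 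Lemma \ref{l:epsilon 0} then yields, for each $K\in T_H$ with $h\leq \epsilon_0 H$,
\begin{align*}
\|\sqrt{F''(Dz_h^*)[(Dv)^2]}\|_{L^\infty_h(K)} \leq CH^{-d}\|\sqrt{F''(Dz_h^*)[(Dv)^2]}\|_{L^1_h(K)},
\end{align*}
with $C$ independent of $y\in Y$ by compactness; undoing the homogeneity normalization in $a$ extends this to arbitrary $Dv$ of degree $\alpha-1$.

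In the complementary regime $\epsilon_0 H < h \leq H$, each $K\in T_H$ contains at most $O(\epsilon_0^{-d})$ quadrature points of $T_h$, so the discrete norms $\|\cdot\|_{L^\infty_h(K)}$ and $|K|^{-1}\|\cdot\|_{L^1_h(K)}$ are equivalent on the finite-dimensional space of values at those points, with equivalence constants controlled via \eqref{e:quadrature weights}, and the inequality holds trivially. The main obstacle is the $y$-uniformity of the constants delivered by Lemma \ref{l:epsilon 0}: one must verify that the analyticity radius of $\Phi$ around $L$, the Hessian bounds on $u_0^*$ as $\tau\to 0$ (i.e.\ $t\to\infty$), and the shape of the Weierstrass factorization underlying Lemma \ref{l:strong RS} are all controlled uniformly as $(a,\tau)$ ranges over the compact set $Y$; once this uniformity is in hand the result is immediate.
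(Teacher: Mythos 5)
Your reduction to Lemma~\ref{l:epsilon 0} and the treatment of the coarse regime $\epsilon_0 H < h$ follow the paper's approach, but there is a genuine gap in the analyticity step. You set $f^2((z,\tilde x),\cdot) = F''(z)[P_a(\tilde x)^2]$ and let $g_0$ map into $(Dz_0^*(\tau^{-1},x),x)$, asserting that $f^2$ extends analytically to a compact complex neighborhood because $\Phi>0$ on $L$. But $L$ is a compact set of \emph{gradients} in $\mathbb{R}^d$; the slack component $s_0^*(\tau^{-1},x)$ of $Dz_0^*$ approaches $\Lambda(\nabla u_0^*)$, i.e.\ the boundary of $Q$, as $\tau\to 0$. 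There $\Phi\to 0$ and $F''$ blows up (cf.\ Corollary~\ref{c:properties}: $\sigma(F'') \subset [C_1, C_2 t^2]$). Thus $F''$, and hence your $f^2$, is not analytic, nor even bounded, on any fixed compact complex neighborhood of the image of $g_0$, so Lemma~\ref{l:epsilon 0} does not apply as written.

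The paper's proof avoids this by (i) taking the first argument of $g_h$ to be $q=\partial_x u_h^*(t,x)$ only, which stays in the compact set $L$, with the slack recovered implicitly as the analytic function $s^{(t)}(q)$ solving $t\Phi - \Phi_s = 0$; and (ii) applying Lemma~\ref{l:epsilon 0} not to $F''[(Dv)^2]$ but to the renormalized quantity
\begin{align}
f^2(q,t,v) &= \Phi^2(w)\,F''(w)[(Dv)^2] = \bigl(\Phi'(w)[Dv]\bigr)^2 - \Phi(w)\,\Phi''(w)[(Dv)^2],
\end{align}
which is a polynomial in derivatives of the analytic function $\Phi$ and hence analytic and bounded on a genuine compact complex neighborhood. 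The factor $\Phi(w)$ is then shown to satisfy $\Phi(w)=t^{-1}\Phi_s(w)=\Theta(t^{-1})$ uniformly (this is where the lower bound on $\Phi_s$ is used), so multiplying or dividing by $\Phi$ changes both sides of the reverse Hölder inequality by a comparable factor and the estimate transfers from $\Phi\sqrt{F''[\cdot]}$ back to $\sqrt{F''[\cdot]}$. Without this $\Phi^2$ cancellation, the argument you give cannot produce a constant $C$ that is uniform in $t\in[t_0,\infty]$, which is exactly what the theorem asserts.
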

\begin{proof}
From $t\Phi - \Phi_s(q,s^{(t)}(q)) = 0$ and the implicit function theorem, we see that $s^{(t)}(q)$ is an analytic function of $(t,q)$. Denoting $w = (q,s^{(t)}(q))$, the function $f^2(q,t,v) = (Dv)^T \Phi^2(w)F''(w)Dv = (\Phi'(w)[Dv])^2 - \Phi(w) \Phi''(w)[(Dv)^2]$ is complex analytic on $U \times Y$ where $Y = [t_0,\infty] \times S$. 
Furthermore,
\begin{align}
\Phi(w) & = {1 \over t}\Phi_s(w) = \Theta(t^{-1}); \label{e:Phi approximation}
\end{align}
i.e. $\Phi(w)t$ is uniformly bounded below and above.
We put $q(x) = g_h(x,t) := \partial_x u_{h}^*(t,x)$. Note that
\begin{align}
\|Du^*_h(t) - Du^*_0(t)\|_{L^{\infty}(\Omega)} \leq Ch.
\end{align}
Now let $\epsilon_0>0$ be as in Lemma \ref{l:epsilon 0}.
If $\epsilon_0 H \leq h = O(h)$ then we use the ``rough'' quadrature bound
\begin{align}
\int_{K}^{(h)} \eta = \sum_i \omega_i \eta(x_i) \geq \omega_{i_0} \eta(x_{i_0}) \geq Ch^d\|\eta\|_{L^{\infty}_h(K)},
\end{align}
for some suitable $i_0$ such that $\|\eta\|_{L^{\infty}_h(K)} = \eta(x_{i_0})$. In this regime, we have $H = O(h)$ so that 
\begin{align}
{
\|\sqrt{F''(Dz_{h}^*(t))[(Dv)^2]}\|_{L^{\infty}_h(K)}
\over
\|\sqrt{F''(Dz_{h}^*(t))[(Dv)^2]}\|_{L^{1}_h(K)}
}
& \leq CH^{-d}.
\end{align}
In the regime $h < \epsilon_0 H$, \eqref{e:hopefully near the end} is the desired estimate.
\end{proof}

\section{Implementation: the practical MGB algorithm} \label{s:practical MGB}

Theorem \ref{t:Algorithm MGB} states that Algorithm MGB converges for certain large $t$ steps, but not arbitrarily large $t$ steps. Thus, some sort of $t$ step size adaptation is needed. Furthermore, it was shown in \citet{loisel2020efficient} that the great majority of the time, $z_h^*(t_{k+1})$ can be computed directly from $z_h^*(t_k)$ with a long step size, with very few Newton steps. In view of these two facts, we now introduce the practical MGB algorithm, which operates as follows.

\begin{definition}[The practical MGB algorithm]
To compute $z_h^*(t_{k})$ from $z_h^*(t_{k-1})$, proceed as follows.
\begin{enumerate}
\item Set $t_k = t_{k-1} \rho_{k-1}$ and attempt to find $z_h^*(t_{k})$ by Newton iteration starting from $z_h^*(t_{k-1})$, with a maximum of 5 Newton iterations allowed. We call this a direct step. Denote by $m_{k,0}$ the number of Newton iterations used here.
\item If the direct step failed to converge in 5 iterations, compute instead $z_h^*(t_{k})$ by the usual MGB algorithm of definition \ref{d:Algorithm MGB}. Denote by $m_{k,{\ell}}$ the number of Newton iterations used on grid level $\ell$.
\item Stepsize adaptation. Denote $m_k = \max_{\ell} m_{k,\ell}$. Set the step size
\begin{align}
\rho_k & = \begin{cases}
\rho_{k-1}^2 & \text{ if } m_k \leq 2, \\
\rho_{k-1} & \text{ if } 3 \leq m_k \leq 5, \\
\sqrt{\rho_{k-1}} & \text{ if } m_k \geq 6.
\end{cases} \label{e:stepsize adaptation}
\end{align}
\end{enumerate}
\end{definition}

Each step of this algorithm requires the minimization of a function by Newton iteration, which we have named $\Bmin(F,c,x^{(0)},R)$. Here, $F$ is the barrier, $c = c[x]$ is a functional of $x$, and $R$ is a matrix whose columns form a basis for the relevant finite element space $V_{h^{(\ell)}} \subset W_0^{1,\infty}(\Omega) \times L^{\infty}(\Omega)$. The function $\Bmin$ uses damped Newton iterations to solve
\begin{align}
\Bmin(F,c,x^{(0)},R) \approx \argmin_{y \in x^{(0)} + \spn R} \int_{\Omega}^{(h)} c[y] + F(Dy).
\end{align}

This architecture allows one to also solve boundary value problems. Indeed, if $x^{(1)} = \Bmin(F,c,x^{(0)},R)$ and $x^{(0)} = (u^{(0)},s^{(0)})$ with $u^{(0)}|_{\partial \Omega} = g \neq 0$, where $g$ is some Dirichlet data, then since $\spn R \subset W^{1,\infty}_0(\Omega)$, we will have that also have $u^{(1)}|_{\partial \Omega} = g \neq 0$. Thus, Dirichlet data that is injected into the first iterate, will be preserved across all subsequent iterates, allowing one to solve inhomogeneous Dirichlet problems.

\subsection{Inhomogeneous Dirichlet problems.}

When solving inhomogeneous Dirichlet problems, it is important to find a smooth prolongation of $g$ to the interior of $\Omega$. To that end, we define $u_h(g)$ to be the solution of the discrete Poisson problem
\begin{align}
\Delta_h u_h = 0 \text{ in } \Omega \text{ and } u_h = g \text{ on } \partial \Omega.
\end{align}
Here, $\Delta_h$ is the usual finite element discretization of the Laplacian on the piecewise polynomial space of degree $\alpha$ on $T_h$.

The user provides boundary data $g$ and a forcing $f$. We then automatically produce an initial value for $z^{(0)} = (u^{(0)},s^{(0)})$ by putting $u^{(0)} = u_h(g)$. For the slack $s^{(0)}$, we initialize it to the constant $s^{(0)} = 1$ and iteratively double it until $F(u^{(0)},s^{(0)}) < \infty$ for all $x \in \Omega$. Given this value of $z^{(0)}$, we may begin the MGB Algorithm to follow the central path.

Although Theorem \ref{t:Algorithm MGB} states that it suffices to choose $t_0 = O(1)$, we found that it is better to use $t_0 \sim h^d$. This seems to result in a more moderate number of initial centering steps needed to locate the central path.

\subsection{Issues of floating point arithmetic.}

One can quickly reach the limits of double precision floating point accuracy. Denote by $\epsilon \approx 2.22\times 10^{-16}$ the ``machine epsilon''. We will be using piecewise quadratic elements in dimension $d=2$, so that our stopping criterion will be $t \sim h^{-4}$. In view of Lemma \ref{c:properties}, the condition number $\kappa$ of $F''(Dz_h^*(t))$ may be as large as $h^{-8}$ and it becomes practically impossible to compute Newton steps if $h^{-8} \sim \epsilon$, i.e. $h \sim 0.01$. At this point, the matrix $H = \cF_h''(Dz_h^*(t))$ becomes numerically singular.

To avoid complications due to floating point roundoff, we regularize our problem as follows. First, we add $10^{-15}\vvvert H\vvvert_{\infty}I$ to $H$, which has a negligible effect on $H$ when it is well-conditioned, but prevents numerical catastrophe when $H$ becomes extremely ill-conditioned. Second, we limit $t$ to $t \leq 10^8$, beyond which it is numerically futile to continue the optimization.

\subsection{The naïve algorithm.}

We have also implemented the naïve algorithm. As it was shown in \citet{loisel2020efficient} that automatic stepsize adaptation is significantly better in practice than the theoretically optimal short step size, we also use the stepsize adaptation \eqref{e:stepsize adaptation} for the naïve algorithm. However, \eqref{e:stepsize adaptation} can only compensate for ``stiffness'' caused by large $t$ steps, and cannot compensate for the difficulty of refining the $h$ parameter when $t$ is already large, as we will see in the numerical experiments.

\section{Numerical experiments}

\begin{figure}
\includegraphics[width=0.5\textwidth]{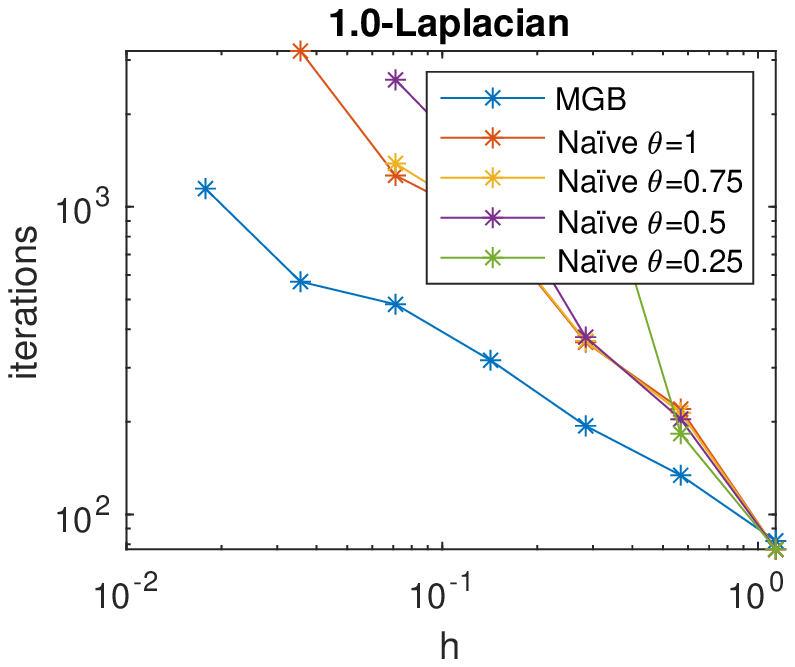}%
\includegraphics[width=0.5\textwidth]{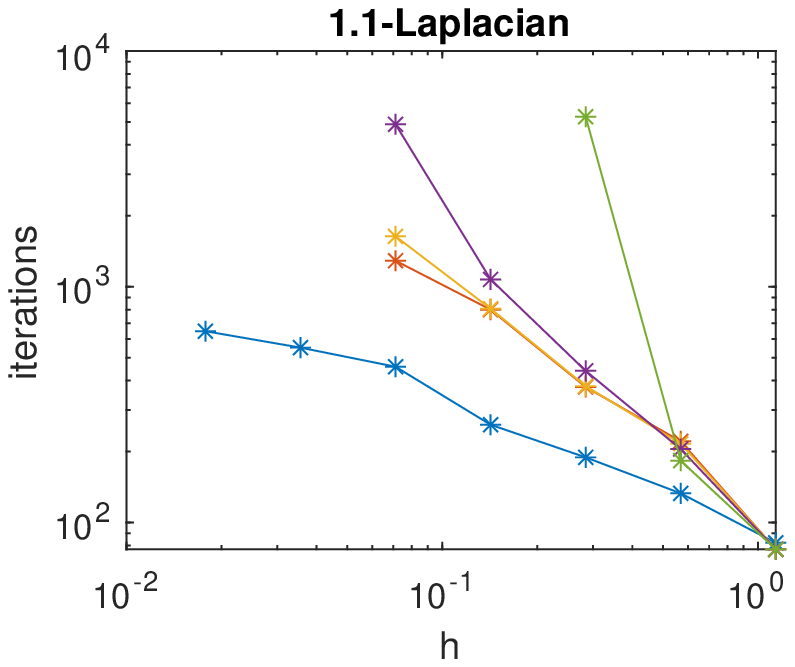}\\ \\
\includegraphics[width=0.5\textwidth]{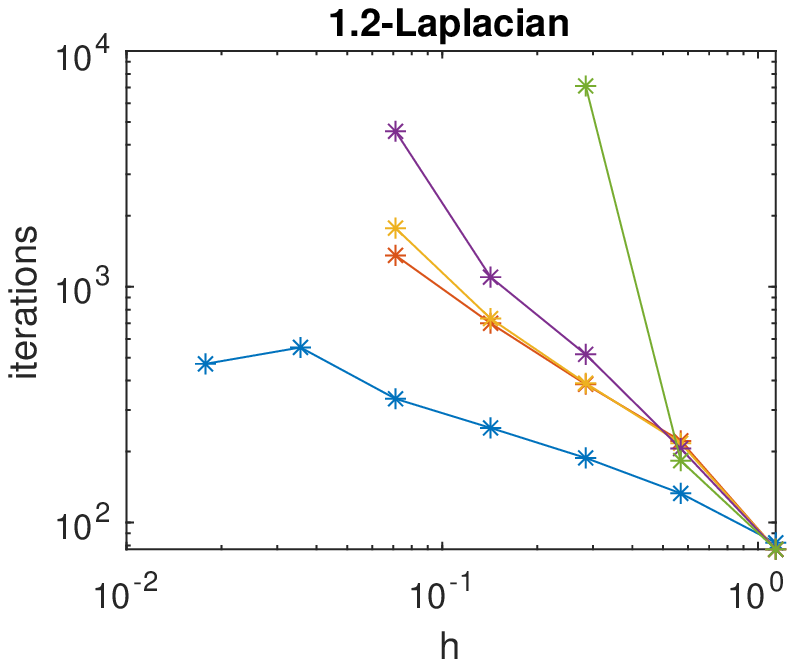}%
\includegraphics[width=0.5\textwidth]{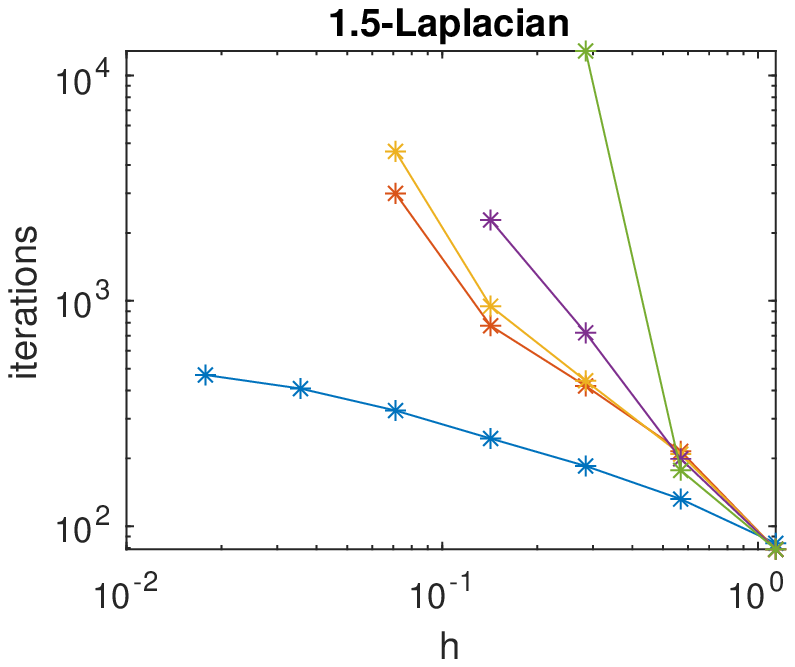}\\ \\
\includegraphics[width=0.5\textwidth]{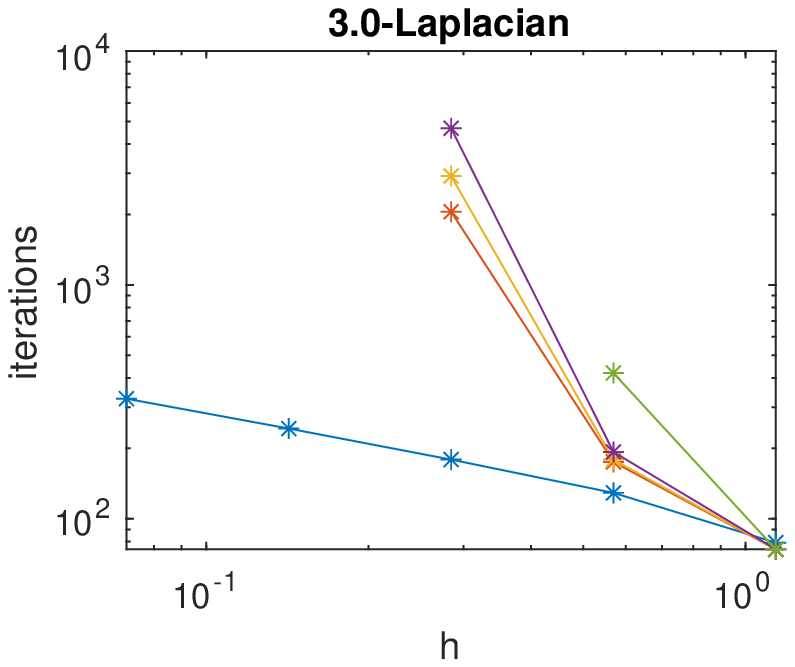}%
\includegraphics[width=0.5\textwidth]{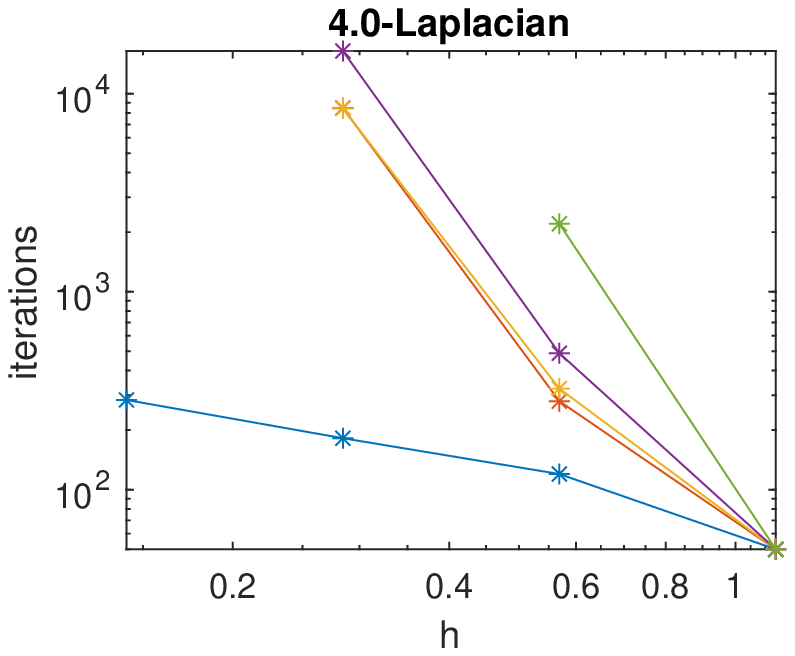}
\caption{Iteration counts of the MGB algorithm, compared to the naïve algorithm with various refinement schedules. \label{f:MGB vs naive}}
\end{figure}
\begin{figure}
\includegraphics[width=0.5\textwidth]{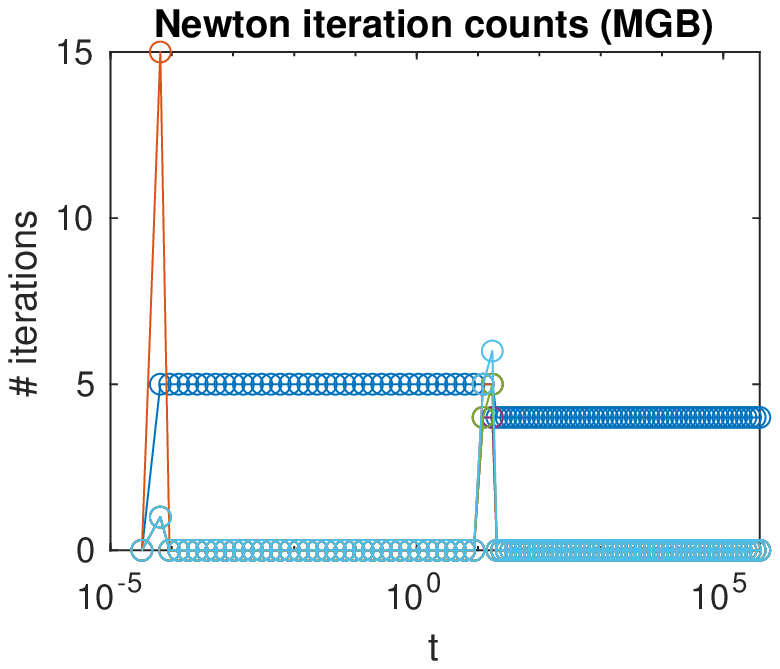}%
\includegraphics[width=0.5\textwidth]{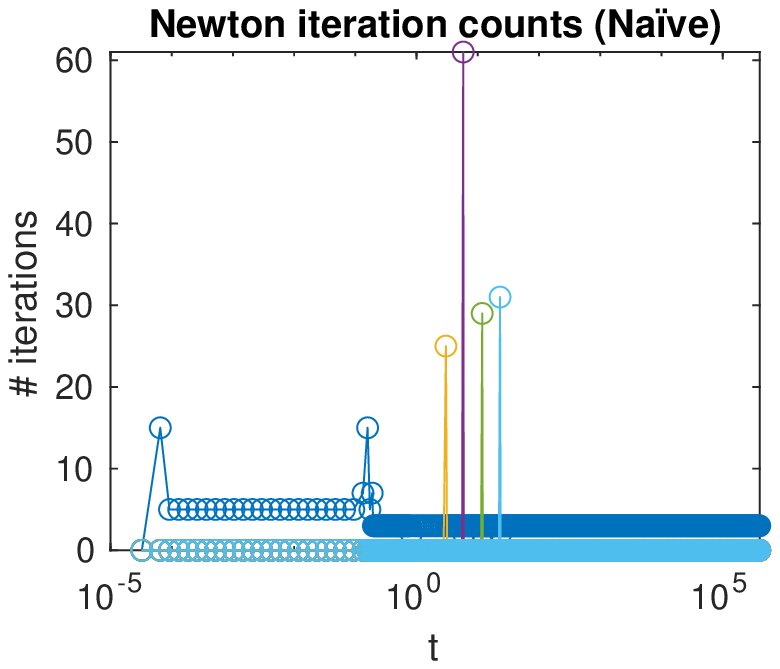}
\caption{Iteration counts for Algorithm MGB (left) and the naïve algorithm (right), as a function of $t_k$, for the $1.0$-Laplacian.}
\end{figure}
\begin{figure}
\includegraphics[width=0.5\textwidth]{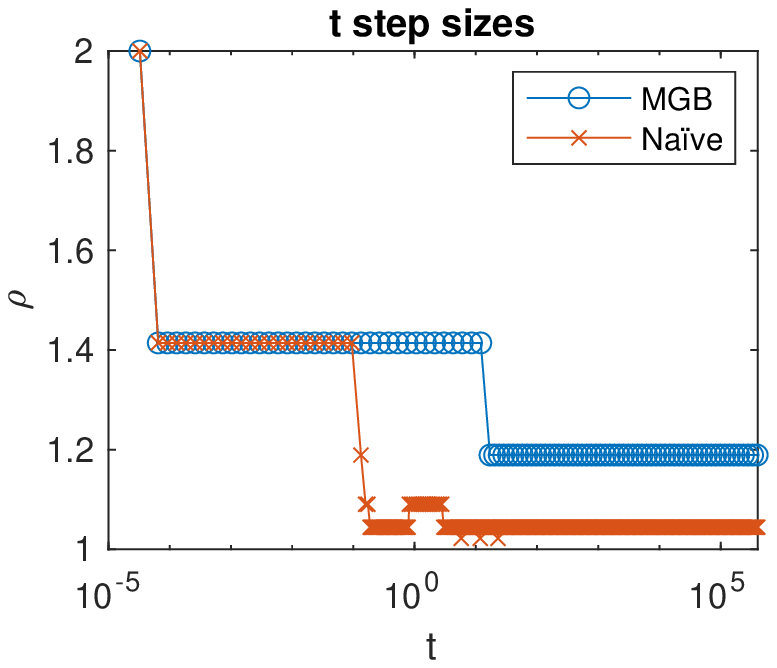}
\caption{Step sizes as a function of $t_k$, for the $1.0$-Laplacian.\label{f:stepsize}}
\end{figure}

We have implemented a suite of tests based on the $p$-Laplacian, parametrized by $1 \leq p < \infty$, with
\begin{align}
\Lambda(q) & = \|q\|_2^p.
\end{align}
We use the self-concordant barrier
\begin{align}
F(q,s) & = -\log(s^{2 \over p} - \|q\|_2^2) - 2\log s,
\end{align}
see \citet{loisel2020efficient} for more information on the $p$-Laplacian and this barrier.

When using homogeneous Dirichlet conditions, a smooth solution $u$ will have some extrema that are interior to $\Omega$, and at those points, one will have $\nabla u(x) = 0$. At these points, if $1 \leq p < 2$, the function $\Lambda(\nabla u(x))$ becomes a distribution, so the forcing must be singular. Since our algorithm does not handle distributional data, we prefer to solve a problem with boundary value $g$ on $\partial \Omega$, and forcing $f=0$. Specifically

We report the iteration counts in Figure \ref{f:MGB vs naive}. We vary $p \in \{1,1.1,1.2,1.5,3,4\}$, $0.01 < h \leq 1.3$, and compare Algorithm MGB and the naïve algorithm with a range of $h$ and $t$ refinement schedules. We report the number of iterations needed in each case to obtain convergence. Each algorithm is stopped if it runs longer than 5 minutes, in which case it is deemed to have failed to converge.

The naïve algorithm is parametrized by its $h$ and $t$ refinement schedule. We have used the schedule $\ell = \theta \log_2 t$, where $\ell$ denotes the grid level $h^{(\ell)}$. The algorithm alluded to by \cite{schiela2011interior} can be related to the case $\theta = 0.25$; indeed, in that scenario, if large $t$ steps are used throughout and if the $h$ refinements converge quickly, then indeed most of the iterations will occur on the coarse grid levels. When $\theta \geq 0.5$, at least half of the iterations are expected to be computed on the fine grid.

Unfortunately, all the versions of the naïve algorithm have trouble converging for small values of $h$. The failures are all caused by an extremely large number of Newton iterations required to perform the $h$ refinement when $t$ becomes large. Note that when failures occur because of large $t$ step sizes, then smaller $t$ step sizes can be used to allow the algorithm to converge, but with $h$ refinements, it is impossible to find intermediate grid levels between a level $h^{(\ell)}$ and the next one $h^{(\ell+1)} = {1 \over 2}h^{(\ell)}$. As a result, the only way of making the naïve algorithm work is to refine the $h$ grid earlier, e.g. as per the $h$-then-$t$ schedule.

The MGB algorithm converges in all cases $p < 2$ and for all grid parameters, and the convergence is quite fast.  Algorithm MGB does not converge for all values of $h$ when $p>2$, but this is expected because of floating point loss of accuracy in these scenarios, as noted in \cite{loisel2020efficient}. Briefly speaking, it is difficult to precisely locate the minimum of the function $|x|^p$ when $p$ is large. Despite this, Algorithm MGB is able to converge faster in a wider array of situations, than the naïve algorithms.

We have also displayed how many Newton iterations are needed at each $t_k$, for Algorithm MGB and the naïve algorithm for the $1$-Laplacian. We have displayed iteration counts in different color for each grid level $\ell$. We see that the naïve algorithm's iteration count skyrockets up to 60 when grid transitions occur. By contrast, Algorithm MGB never needs more than $15$ iterations, and then only for the very first iteration, which is expected to take $O(1)$ iterations. We also notice that, at $t = 17.0667$, algorithm MGB used 6 iterations on the coarsest grid level. This is the only iteration, for this specific problem instance, where the ``direct step'' described in Section \ref{s:practical MGB} failed to converge, and the full MGB step was used instead.

As can be seen in Figure \ref{f:stepsize}, this MGB step allows the path-following algorithm to take large steps at all values of $t$, and the step size $\rho$ never decreases to less than $1.18$. By contrast, the naïve algorithm struggles and resorts to step sizes $\rho \approx 1.02$. In this case, both algorithm converged, but for larger problems (with smaller values of $h$), the naïve algorithm fails to converge by taking too many Newton iterations and too much time.

\section{Conclusions and outlook}

Algorithm MGB is the first algorithm that is a provably optimal solver (in the big-$\tilde{O}$ sense) for convex Euler-Lagrange problems, or nonlinear elliptic PDEs. Its running time is shown to be $\tilde{O}(n)$ FLOPS. Numerical experiments confirm the analysis.

\bibliography{mybib}

\begin{thebibliography}{27}
\providecommand{\natexlab}[1]{#1}
\providecommand{\url}[1]{\texttt{#1}}
\expandafter\ifx\csname urlstyle\endcsname\relax
  \providecommand{\doi}[1]{doi: #1}\else
  \providecommand{\doi}{doi: \begingroup \urlstyle{rm}\Url}\fi

\bibitem[Banach(1938)]{banach1938homogene}
Stefan Banach.
\newblock {\"U}ber homogene {P}olynome in ($l^2$).
\newblock \emph{Studia Mathematica}, 7\penalty0 (1):\penalty0 36--44, 1938.

\bibitem[Bebendorf(2016)]{bebendorf2016low}
Mario Bebendorf.
\newblock Low-rank approximation of elliptic boundary value problems with
  high-contrast coefficients.
\newblock \emph{SIAM Journal on Mathematical Analysis}, 48\penalty0
  (2):\penalty0 932--949, 2016.

\bibitem[Berninger et~al.(2014)Berninger, Loisel, and Sander]{berninger20142}
Heiko Berninger, S{\'e}bastien Loisel, and Oliver Sander.
\newblock The 2-{L}agrange multiplier method applied to nonlinear transmission
  problems for the {R}ichards equation in heterogeneous soil with cross points.
\newblock \emph{SIAM Journal on Scientific Computing}, 36\penalty0
  (5):\penalty0 A2166--A2198, 2014.

\bibitem[Boyd and Vandenberghe(2004)]{boyd2004convex}
Stephen Boyd and Lieven Vandenberghe.
\newblock \emph{Convex optimization}.
\newblock Cambridge university press, 2004.

\bibitem[Brabazon et~al.(2014)Brabazon, Hubbard, and
  Jimack]{brabazon2014nonlinear}
Keeran~J Brabazon, Matthew~E Hubbard, and Peter~K Jimack.
\newblock Nonlinear multigrid methods for second order differential operators
  with nonlinear diffusion coefficient.
\newblock \emph{Computers \& Mathematics with Applications}, 68\penalty0
  (12):\penalty0 1619--1634, 2014.

\bibitem[Cruz-Uribe and Neugebauer(1995)]{cruz1995structure}
David Cruz-Uribe and Christoph~J Neugebauer.
\newblock The structure of the reverse {H\"o}lder classes.
\newblock \emph{Transactions of the American Mathematical Society},
  347\penalty0 (8):\penalty0 2941--2960, 1995.

\bibitem[Gilbarg and Trudinger(1977)]{gilbarg1977elliptic}
David Gilbarg and Neil~S Trudinger.
\newblock \emph{Elliptic partial differential equations of second order},
  volume 224.
\newblock Springer, 1977.

\bibitem[Grafakos(2008)]{grafakos2008classical}
Loukas Grafakos.
\newblock \emph{Classical {F}ourier analysis}, volume~2.
\newblock Springer, 2008.

\bibitem[Greer and Loisel(2015)]{greer2015optimised}
Neil Greer and S{\'e}bastien Loisel.
\newblock The optimised {S}chwarz method and the two-{L}agrange multiplier
  method for heterogeneous problems in general domains with two general
  subdomains.
\newblock \emph{Numerical Algorithms}, 69\penalty0 (4):\penalty0 737--762,
  2015.

\bibitem[Hackbusch(2013)]{hackbusch2013multi}
Wolfgang Hackbusch.
\newblock \emph{Multi-grid methods and applications}, volume~4.
\newblock Springer Science \& Business Media, 2013.

\bibitem[Hoffman et~al.(1973)Hoffman, Martin, and Rose]{hoffman1973complexity}
Alan~J Hoffman, Michael~S Martin, and Donald~J Rose.
\newblock Complexity bounds for regular finite difference and finite element
  grids.
\newblock \emph{SIAM Journal on Numerical Analysis}, 10\penalty0 (2):\penalty0
  364--369, 1973.

\bibitem[Karangelis and Loisel(2015)]{karangelis2015condition}
Anastasios Karangelis and S{\'e}bastien Loisel.
\newblock Condition number estimates and weak scaling for 2-level 2-{L}agrange
  multiplier methods for general domains and cross points.
\newblock \emph{SIAM Journal on Scientific Computing}, 37\penalty0
  (2):\penalty0 C247--C267, 2015.

\bibitem[Kinnunen and Lewis(2000)]{kinnunen2000higher}
Juha Kinnunen and John~L Lewis.
\newblock Higher integrability for parabolic systems of p-{L}aplacian type.
\newblock 2000.

\bibitem[Krantz(2001)]{krantz2001function}
Steven~George Krantz.
\newblock \emph{Function theory of several complex variables}, volume 340.
\newblock American Mathematical Soc., 2001.

\bibitem[Loisel(2001)]{loisel2001polarization}
S{\'e}bastien Loisel.
\newblock Polarization constants for symmetric multilinear forms.
\newblock Master's thesis, 2001.

\bibitem[Loisel(2013)]{loisel2013condition}
S{\'e}bastien Loisel.
\newblock Condition number estimates for the nonoverlapping optimized {S}chwarz
  method and the 2-{L}agrange multiplier method for general domains and cross
  points.
\newblock \emph{SIAM Journal on Numerical Analysis}, 51\penalty0 (6):\penalty0
  3062--3083, 2013.

\bibitem[Loisel(2020)]{loisel2020efficient}
S{\'e}bastien Loisel.
\newblock Efficient algorithms for solving the p-{L}aplacian in polynomial
  time.
\newblock \emph{Numerische Mathematik}, 146\penalty0 (2):\penalty0 369--400,
  2020.

\bibitem[Loisel and Nguyen(2017)]{loisel2017optimal}
S{\'e}bastien Loisel and Hieu Nguyen.
\newblock An optimal {S}chwarz preconditioner for a class of parallel adaptive
  finite elements.
\newblock \emph{Journal of Computational and Applied Mathematics},
  321:\penalty0 90--107, 2017.

\bibitem[Loisel et~al.(2008)Loisel, Nabben, and Szyld]{loisel2008hybrid}
S{\'e}bastien Loisel, Reinhard Nabben, and Daniel~B Szyld.
\newblock On hybrid multigrid-{S}chwarz algorithms.
\newblock \emph{Journal of Scientific Computing}, 36\penalty0 (2):\penalty0
  165--175, 2008.

\bibitem[Loisel et~al.(2010)Loisel, C{\^o}t{\'e}, Gander, Laayouni, and
  Qaddouri]{loisel2010optimized}
S{\'e}bastien Loisel, Jean C{\^o}t{\'e}, Martin~J Gander, Lahcen Laayouni, and
  Abdessamad Qaddouri.
\newblock Optimized domain decomposition methods for the spherical laplacian.
\newblock \emph{SIAM Journal on Numerical Analysis}, 48\penalty0 (2):\penalty0
  524--551, 2010.

\bibitem[Loisel et~al.(2015)Loisel, Nguyen, and Scheichl]{loisel2015optimized}
S{\'e}bastien Loisel, Hieu Nguyen, and Robert Scheichl.
\newblock Optimized {S}chwarz and 2-lagrange multiplier methods for multiscale
  elliptic {PDEs}.
\newblock \emph{SIAM Journal on Scientific Computing}, 37\penalty0
  (6):\penalty0 A2896--A2923, 2015.

\bibitem[Loisel(2021)]{380702}
Sébastien Loisel.
\newblock What is the big-{O} complexity of solving the sparse {L}aplace
  equation in the plane?
\newblock MathOverflow, 2021.
\newblock URL \url{https://mathoverflow.net/q/380702}.

\bibitem[Nesterov(2013)]{nesterov2013introductory}
Yurii Nesterov.
\newblock \emph{Introductory lectures on convex optimization: A basic course},
  volume~87.
\newblock Springer Science \& Business Media, 2013.

\bibitem[Nesterov and Nemirovskii(1994)]{nesterov1994interior}
Yurii Nesterov and Arkadii Nemirovskii.
\newblock \emph{Interior-point polynomial algorithms in convex programming},
  volume~13.
\newblock Siam, 1994.

\bibitem[Schiela and G{\"u}nther(2011)]{schiela2011interior}
Anton Schiela and Andreas G{\"u}nther.
\newblock An interior point algorithm with inexact step computation in function
  space for state constrained optimal control.
\newblock \emph{Numerische Mathematik}, 119\penalty0 (2):\penalty0 373--407,
  2011.

\bibitem[Subber and Loisel(2014)]{subber2014schwarz}
Waad Subber and S{\'e}bastien Loisel.
\newblock Schwarz preconditioners for stochastic elliptic {PDEs}.
\newblock \emph{Computer Methods in Applied Mechanics and Engineering},
  272:\penalty0 34--57, 2014.

\bibitem[Tai and Xu(2002)]{tai2002global}
Xue-Cheng Tai and Jinchao Xu.
\newblock Global and uniform convergence of subspace correction methods for
  some convex optimization problems.
\newblock \emph{Mathematics of Computation}, 71\penalty0 (237):\penalty0
  105--124, 2002.

\end{thebibliography}
\end{document}